\titlespacing{\paragraph}{0pt}{1em}{.7em}
\declaretheorem[style=plain,parent=section,title=Theorem,refname={Theorem,Theorems}]{theo}
\declaretheorem[style=plain,sibling=theo,title=Proposition,refname={Proposition,Propositions}]{prop}
\declaretheorem[style=plain,sibling=theo,title=Corollary,refname={Corollary,Corollaries}]{cor}
\declaretheorem[style=plain,sibling=theo,title=Lemma,refname={Lemma,Lemmas}]{lem}
\declaretheorem[style=definition,sibling=theo,title=Definition,refname={Definition,Definitions}]{defin}
\declaretheorem[style=definition,sibling=theo,title=Example,refname={Example,Examples}]{ex}
\declaretheorem[style=remark,sibling=theo,title=Remark,refname={Remark,Remarks}]{rem}
\newlist{myenum}{enumerate}{1}
\setlist[myenum]{label=\textbf{\roman*)},ref=\textnormal{\textbf{(\roman*)}},font=\normalfont}
\newlist{mylist}{itemize}{1}
\setlist[mylist]{label=\textbullet,font=\normalfont}
\newlist{Hassum}{enumerate}{1}
\setlist[Hassum]{label=\textbf{(H\arabic*)},ref=\textnormal{\textbf{(H\arabic*)}},font=\normalfont}
\crefname{myenumi}{item}{items}
\crefname{Hassumi}{condition}{conditions}
\crefname{equation}{}{}
\DeclareMathOperator{\diam}{diam}
\DeclareMathOperator{\interior}{int}
\newcommand{\Acal}{\mathcal A}
\newcommand{\wtAcal}{\widetilde\Acal}
\newcommand{\Ecal}{\mathcal E}
\newcommand{\Hcal}{\mathcal H}
\newcommand{\ovI}{\overline I}
\newcommand{\Ical}{\mathcal I}
\newcommand{\Jcal}{\mathcal J}
\newcommand{\Nds}{\mathds N}
\newcommand{\Qcal}{\mathcal Q}
\newcommand{\Rds}{\mathds R}
\newcommand{\Scal}{\mathcal S}
\newcommand{\ovT}{\overline T}
\newcommand{\Vbf}{\mathbf V}
\newcommand{\wtVbf}{\widetilde\Vbf}
\newcommand{\ova}{\overline a}
\newcommand{\ovs}{\overline s}
\newcommand{\oveta}{\overline\eta}
\newcommand{\wtgamma}{\widetilde\gamma}
\newcommand{\unphi}{\underline\phi}
\newcommand{\ovxi}{\overline\xi}
\newcommand{\myemail}[2][]{\textsuperscript{#1}\href{mailto:#2}{\texttt{#2}}}
\newcommand{\hrefc}{\textup{(}\hyperref[eq:globeik]{\ensuremath{\Hcal}\textup{J}\ensuremath{c}}\textup{)}}
\newcommand{\ghrefc}{\textup{(}\hyperref[eq:eikg]{\textup{HJ}\textsubscript{\ensuremath{\gamma}}\ensuremath{c}}\textup{)}}
\newcommand{\abscite}[2][]{\citeauthor{#2} \parentext{\citefield{#2}[journaltitle]{shortjournal}, \bibhyperref[#2]{\citeyear{#2}}}}
\title{Large Time Behavior of Solutions to Hamilton--Jacobi Equations on Networks}
\author{Marco Pozza}
\affil{Link Campus University, Rome, Italy. \textit{Email address:} \myemail{m.pozza@unilink.it}}
\date{}
\begin{document}

    \maketitle

    \begin{abstract}
        Starting from \abscite{NamahRoquejoffre99} and \abscite{Fathi98}, the large time asymptotic behavior of solutions to Hamilton--Jacobi equations has been extensively investigated by many authors, mostly on smooth compact manifolds and the flat torus. They all prove that such solutions converge to solutions to a corresponding static problem. We extend this study to the case where the ambient space is a network. The presence of a ``flux limiter'', that is the choice of appropriate constants on each vertex of the network necessary for the well-posedness of time-dependent problems on networks, enables a richer statement for the convergence compared to the classical setting. We indeed observe that solutions converge to subsolutions to a corresponding static problem depending on the value of the flux limiter. A finite time convergence is also established.
    \end{abstract}

    \paragraph{2020 Mathematics Subject Classification:} 35B40, 35R02, 49L25, 37J51.

    \paragraph{Keywords:} Hamilton--Jacobi equations, large time behavior, Aubry set, embedded networks.

    \section{Introduction}

    This paper is about the large time behavior of solutions to time-dependent Hamilton--Jacobi equations posed on networks.

    The subject has been extensively investigated on compact manifolds, in particular on the flat torus, first in~\cite{NamahRoquejoffre99,Fathi98} and subsequently in many other papers, among which we cite~\cite{BarlesSouganidis00,Roquejoffre01,DaviniSiconolfi06,BarlesIshiiMitake12}. They all show under suitable assumptions, that, given the solution $v$ to the time-dependent problem
    \begin{equation*}
        \left\{
        \begin{aligned}
            &\partial_t v+H(x,Dv)=0,\\
            &v(x,0)=\phi(x),
        \end{aligned}
        \right.
    \end{equation*}
    and letting $c$ be the critical value of the Hamiltonian $H$, the function $v(x,t)+ct$ uniformly converges, as $t$ positively diverges, to a solution $u$ of the critical equation
    \begin{equation*}
        H(x,Du)=c.
    \end{equation*}

    We consider a connected finite network $\Gamma$ embedded in $\Rds^N$ with vertices linked by regular simple curves $\gamma$ parametrized in $[0,1]$, called arcs of $\Gamma$. A Hamiltonian on $\Gamma$ is a collection of Hamiltonians $H_\gamma:[0,1]\times\Rds\to\Rds$ indexed by the arcs, depending on state and momentum variable, with the crucial feature that Hamiltonians associated to arcs possessing different support are totally unrelated.

    The equations we deal with are accordingly of the form
    \begin{equation}\label{eq:loctprob}
        \partial_t U(s,t)+H_\gamma(s,\partial_s U(s,t))=0,\qquad\text{on }(0,1)\times(0,\infty),
    \end{equation}
    on each arc $\gamma$, and a solution on $\Gamma$ is a continuous function $v:\Gamma\times[0,\infty)\to\Rds$ such that, for each arc $\gamma$, $v(\gamma(s),t)$ solves~\cref{eq:loctprob} in the viscosity sense and satisfies suitable additional conditions on the discontinuity interfaces
    \begin{equation*}
        \{(x,t),t\in[0,\infty)\}\qquad\text{with }x\in\Vbf,
    \end{equation*}
    where $\Vbf$ denotes the set of vertices. It has been established in~\cite{ImbertMonneau17}, in the case of junctions, and in~\cite{Siconolfi22}, for general networks, that to get existence and uniqueness of solutions, equations~\cref{eq:loctprob} must be coupled not only with a continuous initial datum at $t=0$, but also with a flux limiter, that is a choice of appropriate constants $c_x$ on each vertex $x$. In~\cite{LionsSouganidis17,Morfe20}, the time-dependent problem is studied on junctions, possibly multidimensional, with Kirchoff type Neumann conditions at vertices, without using flux limiters.\\
    In~\cite{Siconolfi22} flux limiters crucially appear in the conditions that a solution must satisfy on the interfaces and, among other things, bond from above the time derivatives of any subsolution on it. Even if an initial datum is fixed, solutions can change according to the choice of flux limiter, and they actually play a significant role in our analysis. Recently in~\cite{PozzaSiconolfi23} it has been given a Lax--Oleinik representation formula for the flux limited solutions to the evolutive problem, extending the result of~\cite{ImbertMonneauZidani12} where such a formula is given for the case of junctions.

    We prove here that, if $v$ is a solution to the time-dependent problem on $\Gamma$, then there is a unique constant $a$ depending on the flux limiter such that $v(x,t)+at$ possesses a uniform limit $u$, as $t$ positively diverges, $u$ such that $u\circ\gamma$ is a viscosity solution to the local problem
    \begin{equation*}
        H_\gamma(s,\partial_s U(s,t))=a,\qquad\text{on }(0,1),
    \end{equation*}
    for every arc $\gamma$. Under suitable assumptions the value $a$ coincides with the critical value of the Hamiltonian. This notion, as well as an extension of Weak KAM theory, has been first studied in~\cite{SiconolfiSorrentino18} in the framework of networks/graphs.\\
    A relevant peculiarity of the large time behavior problem on network is that the geometry of the network allows, under specific conditions, a finite time convergence. This will be useful for future applications and numerical analysis.

    We employ a dynamical approach to the problem exploiting the Lax--Oleinik formula given in~\cite{PozzaSiconolfi23}, the dynamic characterization of the solutions of the Eikonal equations and the properties of the Aubry set. To our knowledge there is no previous literature about the large time behavior of solutions to Hamilton--Jacobi equations on networks.

    The paper is organized as follows: in \cref{prelimsec} we fix some notation and conventions. In \cref{netsec} we provide some basic facts about networks and Hamiltonians defined on them, and give our main assumptions. In \cref{HJsec} we introduce Eikonal and time-dependent equations, together with some results relevant to our analysis. In \cref{convsec} we present the results of the asymptotic analysis. We distinguish three main cases according to the values of the flux limiter and the initial datum. In \cref{fixpointsec} we briefly discuss a characterization of the critical value involving the large time behavior.\\
    In the appendices we provide some auxiliary results. The reparametrization of curves on $\Gamma$ and their relationship with the representation formulas is the subject of \cref{repcurvesec}. \Cref{minact} is about the Lipschitz continuity of the minimal action functional. In \cref{curvecostsec} we provide the proof of a technical \namecref{liveoptcurve}.

    \paragraph{Acknowledgments.} The author acknowledges the support of the Italian Ministry of University and Research's PRIN 2022 grant \emph{``Stability in Hamiltonian dynamics and beyond''}. The author is a member of the INdAM research group GNAMPA.

    \section{Preliminaries}\label{prelimsec}

    We fix a dimension $N$ and $\Rds^N$ as ambient space. We also define
    \begin{equation*}
        \Rds^+=[0,\infty),\qquad\Qcal=(0,1)\times(0,\infty).
    \end{equation*}
    Notice that $\partial\Qcal=\{0\}\times[0,1]\cup\Rds^+\times\{0,1\}$.

    The scalar product between two elements $x$, $y$ of $\Rds^N$ is denoted with $x\cdot y$. We will use the notation $|\cdot|_2$ to indicate the Euclidean norm on $\Rds^N$.

    If $E\subset\Rds^N$ is a measurable set we denote with $|E|$ its \emph{Lebesgue measure}. We say that a property holds \emph{almost everywhere} (\emph{a.e.}\ for short) if it holds up to a set of measure zero.

    For all $f\in C(E)$, we define $\|f\|_\infty\coloneqq\sup\limits_{x\in E}|f(x)|$.

    Given two real numbers $a$ and $b$, we set
    \begin{equation*}
        a\wedge b\coloneqq\min\{a,b\},\qquad a\vee b\coloneqq\max\{a,b\}.
    \end{equation*}

    By curve we mean throughout the paper an \emph{absolutely continuous} curve with support contained in $\Rds^N$ or $\Rds$. We recall that a curve $\xi:[0,T]\to\Rds^N$ is \emph{closed} if $\xi(0)=\xi(T)$, \textit{simple} if $\xi(t)\ne\xi(t')$ whenever $t\in(0,T)$ and $t\ne t'\in[0,T]$. A point in the support of $\xi$ is called \emph{incident} to $\xi$.

    Let $\xi:[0,T]\to\Rds^N$ and $\xi':[0,T']\to\Rds^N$ be two curves such that $\xi(T)=\xi'(0)$. We define their \emph{concatenation} as the curve $\xi*\xi':[0,T+T']\to\Rds^N$ such that
    \begin{equation*}
        \xi*\xi'(t)\coloneqq\left\{
        \begin{aligned}
            &\xi(t),&&\text{if }t\in[0,T),\\
            &\xi'(t-T),&&\text{if }t\in\left[T,T+T'\right].
        \end{aligned}
        \right.
    \end{equation*}
    Notice that $*$ is an associative operation.

    Given an open set $\mathcal O$ and a continuous function $u:\overline{\mathcal O}\to\Rds$, we call \emph{supertangents} (resp.\ \emph{subtangents}) to $u$ at $x\in\mathcal O$ the viscosity test functions from above (resp.\ below). If needed, we take, without explicitly mentioning, $u$ and test function coinciding at $x$ and test function strictly greater (resp.\ less) than $u$ in a punctured neighborhood of $x$. We say that a subtangent $\varphi$ to $u$ at $x\in\partial\mathcal O$ is \emph{constrained to $\overline{\mathcal O}$} if $x$ is a minimizer of $u-\varphi$ in a neighborhood of $x$ intersected with $\overline{\mathcal O}$. See also \cite[Definition 3.4]{SiconolfiSorrentino18}.

    If $f$ is a locally Lipschitz continuous function we denote with $\partial f$ its Clarke's generalized gradient, see~\cite{Clarke90,ClarkeLedyaevSternWolenski98}. We point out that convex functions are locally Lipschitz continuous.

    \section{Networks}\label{netsec}

    \subsection{Basic Definitions}

    An \emph{embedded network}, or \emph{continuous graph}, is a subset $\Gamma\subset\Rds^N$ of the form
    \begin{equation*}
        \Gamma=\bigcup_{\gamma\in\Ecal}\gamma([0,1])\subset\Rds^N,
    \end{equation*}
    where $\Ecal$ is a finite collection of regular (i.e., $C^1$ with non-vanishing derivative) simple oriented curves, called \emph{arcs} of the network, that we assume, without any loss of generality, parameterized on $[0,1]$. Note that we are also assuming existence of one-sided derivatives at the endpoints 0 and 1. We stress out that a regular change of parameters does not affect our results.

    Observe that on the support of any arc $\gamma$, we also consider the inverse parametrization defined as
    \begin{equation*}
        \widetilde\gamma(s)\coloneqq\gamma(1-s),\qquad\text{for }s\in[0,1].
    \end{equation*}
    We call $\widetilde\gamma$ the \emph{inverse arc} of $\gamma$. We assume
    \begin{equation}\label{eq:nosovrap}
        \gamma((0,1))\cap\gamma'([0,1])=\emptyset,\qquad\text{whenever }\gamma'\ne\gamma,\widetilde\gamma.
    \end{equation}

    We call \emph{vertices} the initial and terminal points of the arcs, and denote by $\Vbf$ the sets of all such vertices. It follows from~\cref{eq:nosovrap} that vertices are the only points where arcs with different support intersect and, in particular,
    \begin{equation*}
        \gamma((0,1))\cap\Vbf=\emptyset,\qquad\text{for any }\gamma\in\Ecal.
    \end{equation*}

    We assume that the network is connected, namely given two vertices there is a finite concatenation of arcs linking them. A \emph{loop} is an arc with initial and final point coinciding. The unique restriction we require on the geometry of the network is
    \begin{enumerate}[label=\textbf{(A\arabic*)}]
        \item $\Ecal$ does not contain loops.
    \end{enumerate}
    This condition is due to the fact that in the known literature about time-dependent Hamilton--Jacobi equations on networks no loops are admitted, see, e.g., \cite{ImbertMonneau17,Siconolfi22,LionsSouganidis17}.

    For each $x\in\Vbf$, we define $\Gamma_x\coloneqq\{\gamma\in\Ecal:\gamma(1)=x\}$.

    The network $\Gamma$ inherits a geodesic distance, denoted with $d_\Gamma$, from the Euclidean metric of $\Rds^N$. It is clear that given $x$, $y$ in $\Gamma$ there is at least a geodesic linking them. The geodesic distance is in addition equivalent to the Euclidean one.

    We also consider a differential structure on the network by defining the \emph{tangent bundle} of $\Gamma$, $T\Gamma$ in symbols, as the set made up by the $(x,q)\in\Gamma\times\Rds^N$ with $q$ of the form
    \begin{equation*}
        q=\lambda\dot\gamma(s),\qquad\text{if $x=\gamma(s)$, $s\in[0,1]$, with $\lambda\in\Rds$}.
    \end{equation*}
    Note that $\dot\gamma(s)$ is univocally determined, up to a sign, if $x\in\Gamma\setminus\Vbf$ or in other words if $s\ne0,1$.

    We proceed recalling a result taken from~\cite{PozzaSiconolfi23} on this topic.

    \begin{lem}\label{invarccomp}
        For any given arc $\gamma$ and curve $\xi:[0,T]\to\gamma([0,1])$, the function
        \begin{equation*}
            \gamma^{-1}\circ\xi:[0,T]\to[0,1]
        \end{equation*}
        is absolutely continuous, and
        \begin{equation*}
            \frac d{dt}\gamma^{-1}\circ\xi(t)=\frac{\dot\gamma\left(\gamma^{-1}\circ\xi(t)\right)\cdot\dot\xi(t)}{|\dot\gamma(\gamma^{-1}\circ\xi(t))|_2^2},\qquad\text{for a.e. }t\in[0,T].
        \end{equation*}
    \end{lem}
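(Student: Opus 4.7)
The plan is to deduce the statement from two classical facts: that a regular simple $C^1$ curve on a compact interval is bi-Lipschitz onto its image, and that absolute continuity is preserved under composition with a Lipschitz function on the outside. The formula itself will then come from differentiating the identity $\gamma(\phi(t))=\xi(t)$, with $\phi\coloneqq\gamma^{-1}\circ\xi$.

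First I would prove that $\gamma^{-1}:\gamma([0,1])\to[0,1]$ is Lipschitz. Since $\gamma$ is $C^1$ and $|\dot\gamma|_2$ is continuous and nowhere vanishing on the compact interval $[0,1]$, there are constants $0<m\le M$ with $m\le|\dot\gamma(s)|_2\le M$ for all $s\in[0,1]$, which already yields $|\gamma(s)-\gamma(s')|_2\le M|s-s'|$. The lower bound $|\gamma(s)-\gamma(s')|_2\ge c|s-s'|$ for some $c>0$ follows by a standard compactness argument: if it failed along sequences $s_n,s_n'$, passing to subsequences $s_n\to s$, $s_n'\to s'$, the case $s\ne s'$ contradicts the simplicity of $\gamma$ via continuity, while the case $s=s'$ contradicts the local bi-Lipschitz property granted by the $C^1$ inverse function theorem at $s$ (since $\dot\gamma(s)\ne0$).

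Once $\gamma^{-1}$ is known to be Lipschitz on $\gamma([0,1])$, the composition $\phi=\gamma^{-1}\circ\xi:[0,T]\to[0,1]$ is absolutely continuous, since absolute continuity is stable under precomposition by Lipschitz maps. Now $\xi(t)=\gamma(\phi(t))$ with $\gamma\in C^1$ and $\phi$ absolutely continuous, so at a.e.\ $t\in[0,T]$ both sides are differentiable and the chain rule gives
\begin{equation*}
    \dot\xi(t)=\dot\gamma(\phi(t))\,\dot\phi(t).
\end{equation*}
Taking the Euclidean scalar product with $\dot\gamma(\phi(t))$ and using $|\dot\gamma(\phi(t))|_2^2\ge m^2>0$ to divide, I obtain
\begin{equation*}
    \dot\phi(t)=\frac{\dot\gamma(\phi(t))\cdot\dot\xi(t)}{|\dot\gamma(\phi(t))|_2^2},
\end{equation*}
which is the desired formula.

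The main obstacle is the bi-Lipschitz estimate in the first step; everything else is a routine application of the chain rule for absolutely continuous functions. One should also note that the quotient in the final formula is well-defined precisely because of the uniform lower bound $|\dot\gamma|_2\ge m>0$ guaranteed by the regularity of $\gamma$, so no measurability issues arise near potential zeros of $\dot\gamma$.
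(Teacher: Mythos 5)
Your proof is correct. The paper does not prove this lemma itself but imports it from the reference \cite{PozzaSiconolfi23}, and your argument follows exactly the expected route: the bi-Lipschitz character of the regular simple parametrization $\gamma$ (the same fact the paper later invokes as \cite[Lemma 3.1]{PozzaSiconolfi23} in the proof of \cref{subsollip}) gives absolute continuity of $\gamma^{-1}\circ\xi$, and the a.e.\ chain rule applied to $\xi=\gamma\circ(\gamma^{-1}\circ\xi)$, followed by taking the scalar product with $\dot\gamma$ and dividing by $|\dot\gamma|_2^2\ge m^2>0$, yields the stated formula.
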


    \begin{rem}\label{curveascomp}
        We notice that, for any given curve $\xi:[0,T]\to\Gamma$, there is an at most countable collection of open disjoint intervals $\{I_i\}$ with $\bigcup_i\ovI_i=[0,T]$ such that
        \begin{equation}\label{eq:curveascomp1}
            \xi\left(\ovI_i\right)\subseteq\gamma_i([0,1]),\qquad\text{for each index }i,
        \end{equation}
        where $\gamma_i$ is an arc of the network. Indeed if $\xi$ is constant this is trivial, otherwise we let
        \begin{equation*}
            E\coloneqq\{t\in[0,T]:\xi(t)\in\Vbf\}.
        \end{equation*}
        $E$ is closed therefore \cite[Theorem 1.9]{DeBarra03} yields
        \begin{equation*}
            [0,T]\setminus E=\bigcup_{i\in\Ical}I_i,
        \end{equation*}
        where $\Ical$ is an at most countable index set and $\{I_i\}_{i\in\Ical}$ is a collection of open disjoint intervals. Similarly
        \begin{equation*}
            [0,T]\setminus\left(\bigcup_{i\in\Ical}\ovI_i\right)=\bigcup_{j\in\Jcal}I_j,
        \end{equation*}
        where $\Jcal$ is an at most countable index set and $\{I_j\}_{\in\Jcal}$ is a collection of open disjoint intervals. It is apparent that $\bigcup\limits_{i\in\Ical\cup\Jcal}\ovI_i=[0,T]$ and~\cref{eq:curveascomp1} holds.\\
        We point out that each $\eta_i\coloneqq\gamma_i^{-1}\circ\xi|_{\ovI_i}$ is an absolutely continuous curve by \cref{invarccomp}.
    \end{rem}

    \subsection{Hamiltonians on \texorpdfstring{$\Gamma$}{Γ}}

    A Hamiltonian on $\Gamma$ is a collection of Hamiltonians $\Hcal\coloneqq\{H_\gamma\}_{\gamma\in\Ecal}$, where
    \begin{alignat*}{2}
        H_\gamma:[0,1]\times\Rds&&\:\longrightarrow\:&\Rds\\
        (s,\mu)&&\:\longmapsto\:&H_\gamma(s,\mu),
    \end{alignat*}
    satisfying
    \begin{equation}\label{eq:condcompH}
        H_{\wtgamma}(s,\mu)=H_\gamma(1-s,-\mu),\qquad\text{for any arc }\gamma.
    \end{equation}
    We emphasize that, apart the above compatibility condition, the Hamiltonians $H_\gamma$ are \emph{unrelated}.

    We require every $H_\gamma$ to be:
    \begin{Hassum}
        \item\label{condcont} continuous in both arguments;
        \item\label{condsuplin} $\lim\limits_{|\mu|\to\infty}\inf\limits_{s\in [0,1]}\dfrac{H_\gamma(s,\mu)}{|\mu|}=\infty$ for any $\gamma\in\Ecal$;
        \item\label{condconv} convex in $\mu$;
        \item\label{condqconv} strictly quasiconvex in $\mu$, which means that, for any $s\in[0,1]$, $\mu,\mu'\in\Rds$ and $\rho\in(0,1)$,
        \begin{equation*}
            H_\gamma\left(s,\rho\mu+(1-\rho)\mu'\right)<\max\left\{H_\gamma(s,\mu),H_\gamma\left(s,\mu'\right)\right\}.
        \end{equation*}
        If \cref{condconv} is satisfied, the above assumption is equivalent to
        \begin{equation}\label{eq:sublevelH}
            \interior\{\mu\in\Rds:H_\gamma(s,\mu)\le a\}=\{\mu\in\Rds:H_\gamma(s,\mu)<a\},\qquad\text{for any }a\in\Rds,\,s\in[0,1],
        \end{equation}
        where $\interior$ denotes the interior of a set.
    \end{Hassum}
    Note that if $H_\gamma$ is strictly convex in the momentum variable then it satisfies both~\ref{condconv} and~\ref{condqconv}.

    We define the support functions
    \begin{equation}\label{eq:sigmadef}
        \sigma_{\gamma,a}^+(s)\coloneqq\max\{\mu\in\Rds:H_\gamma(s,\mu)=a\},\qquad\sigma_{\gamma,a}^-(s)\coloneqq\min\{\mu\in\Rds:H_\gamma(s,\mu)=a\},
    \end{equation}
    with the assumption that when $\{\mu\in\Rds:H_\gamma(s,\mu)=a\}$ is empty $\sigma_{\gamma,a}^+(s)=-\infty$ and $\sigma_{\gamma,a}^-(s)=\infty$. It follows from~\cref{eq:condcompH} that
    \begin{equation}\label{eq:sigcomp}
        \sigma_{\widetilde\gamma,a}^+(s)=-\sigma_{\gamma,a}^-(1-s).
    \end{equation}
    Notice that $\{\mu\in\Rds:H_\gamma(s,\mu)=a\}$ is not empty if and only if $a\ge\min\limits_{\mu\in\Rds}H_\gamma(s,\mu)$, thus $\sigma_{\gamma,a}^+(s)\ne-\infty$ for any $s\in[0,1]$ if and only if $a\ge a_\gamma$, where
    \begin{equation*}
        a_\gamma\coloneqq\max_{s\in[0,1]}\min_{\mu\in\Rds}H_\gamma(s,\mu).
    \end{equation*}

    \begin{prop}\label{sigcont}\leavevmode
        \begin{myenum}
            \item\label{en:sigcont1} For each $\gamma\in\Ecal$ and $s\in[0,1]$ the function $a\mapsto\sigma_{\gamma,a}^+(s)$ is continuous and increasing in $\left[\min\limits_{\mu\in\Rds}H_\gamma(s,\mu),\infty\right)$;
            \item\label{en:sigcont2} for each $\gamma\in\Ecal$ and $a\ge a_\gamma$ the function $s\mapsto\sigma_{\gamma,a}^+(s)$ is continuous in $[0,1]$.
        \end{myenum}
    \end{prop}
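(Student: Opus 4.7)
The plan is to work from the fact that, under \ref{condcont}--\ref{condqconv}, for each $s\in[0,1]$ and each $a\ge\min_\mu H_\gamma(s,\mu)$, the sublevel set $\{\mu\in\Rds:H_\gamma(s,\mu)\le a\}$ is a nonempty closed bounded interval---nonempty by the definition of the threshold, closed by \ref{condcont}, an interval by \ref{condconv}, bounded by \ref{condsuplin}---whose right endpoint is exactly $\sigma^+_{\gamma,a}(s)$. The strict quasiconvexity assumption, in the form \cref{eq:sublevelH}, adds the crucial property that $H_\gamma(s,\mu)<a$ at every $\mu$ strictly inside this interval.

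For \cref{en:sigcont1}, fix $\gamma$ and $s$. Monotonicity is immediate from the inclusion of sublevel sets, and strict monotonicity follows from \cref{eq:sublevelH}: if $a<a'$, then $\sigma^+_{\gamma,a}(s)$ lies in the interior of $\{H_\gamma(s,\cdot)\le a'\}$, so the right endpoint of the latter interval is strictly larger. For continuity, let $a_n\to a$ in $[\min_\mu H_\gamma(s,\mu),\infty)$; \ref{condsuplin} provides a uniform bound on $\sigma^+_{\gamma,a_n}(s)$, so the sequence admits cluster points. Any cluster point $\mu^*$ satisfies $H_\gamma(s,\mu^*)=a$ by \ref{condcont}, whence $\mu^*\le\sigma^+_{\gamma,a}(s)$. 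The reverse inequality is forced by \ref{condqconv}: were $\mu^*<\sigma^+_{\gamma,a}(s)$, the midpoint $\mu_\star$ would satisfy $H_\gamma(s,\mu_\star)<a$, so by \ref{condcont} it would lie in $\{H_\gamma(s,\cdot)\le a_n\}$ for all $n$ large, giving $\mu_\star\le\sigma^+_{\gamma,a_n}(s)$; passing to the limit along the extracted subsequence yields $\mu_\star\le\mu^*$, contradicting $\mu^*<\mu_\star$. Since every cluster point equals $\sigma^+_{\gamma,a}(s)$ and the sequence is bounded, the full sequence converges.

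The proof of \cref{en:sigcont2} follows the same template with $s$ in place of $a$. The hypothesis $a\ge a_\gamma$ ensures $\sigma^+_{\gamma,a}(s)$ is well-defined on the whole of $[0,1]$, and the uniform-in-$s$ form of \ref{condsuplin} yields a uniform bound on $\sigma^+_{\gamma,a}(s)$. Given $s_n\to s$ and a cluster point $\mu^*$ of $\sigma^+_{\gamma,a}(s_n)$, joint continuity of $H_\gamma$ in both arguments gives $H_\gamma(s,\mu^*)=a$, hence $\mu^*\le\sigma^+_{\gamma,a}(s)$; the strict-quasiconvexity argument of the previous paragraph, with continuity of $H_\gamma$ now used at the point $(s,\mu_\star)$, again rules out the strict inequality.

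The main conceptual obstacle is that, without \ref{condqconv}, $H_\gamma(s,\cdot)$ could be constant on a nontrivial interval and $\sigma^+_{\gamma,a}$ might then jump even though $H_\gamma$ is continuous; strict quasiconvexity, packaged in \cref{eq:sublevelH}, is precisely the condition that excludes this via the midpoint argument above. The remaining steps are standard compactness and continuity manipulations.
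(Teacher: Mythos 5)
Your proof is correct: the compactness/cluster-point argument combined with the midpoint trick from strict quasiconvexity \labelcref{condqconv} (equivalently \cref{eq:sublevelH}) is exactly the right mechanism, and both the strict monotonicity and the two continuity claims go through as written. The paper itself gives no detailed argument here---it only remarks that \labelcref{en:sigcont1} follows from \cref{eq:sublevelH} and defers \labelcref{en:sigcont2} to an external reference---so your write-up simply supplies, correctly, the standard details the paper leaves implicit, resting on the same key ingredient.
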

    \begin{proof}
        \labelcref{en:sigcont1} follows from~\cref{eq:sublevelH}. For the proof of \labelcref{en:sigcont2} see \cite[Proposition 5.1]{SiconolfiSorrentino18}.
    \end{proof}

    Under assumptions \labelcref{condcont,condsuplin,condconv} it is natural to define, for any $\gamma\in\Ecal$, the \emph{Lagrangian} corresponding to $H_\gamma$ as
    \begin{equation*}
        L_\gamma(s,\lambda)\coloneqq\sup_{\mu\in\Rds}(\lambda\mu-H_\gamma(s,\mu)),
    \end{equation*}
    where the supremum is actually achieved thanks to \labelcref{condcont,condsuplin}. We have for each $\lambda\in\Rds$ and $s\in[0,1]$,
    \begin{equation}\label{eq:lagcomp}
        L_\gamma(s,\lambda)=L_{\widetilde\gamma}(1-s,-\lambda).
    \end{equation}
    We also derive that the Lagrangians $L_\gamma$ are superlinear. We will define later on a Lagrangian defined on the whole network, assuming suitable gluing conditions on the $L_\gamma$.

    It follows from the definition that, for any $\gamma\in\Ecal$,
    \begin{equation}\label{eq:loclagmin}
        \min_{s\in[0,1]}L_\gamma(s,0)=\min_{s\in[0,1]}\max_{\mu\in\Rds}(-H_\gamma(s,\mu))=-\max_{s\in[0,1]}\min_{\mu\in\Rds}H_\gamma(s,\mu)=-a_\gamma.
    \end{equation}

    \section{Hamilton--Jacobi Equations on Networks}\label{HJsec}

    \subsection{Eikonal HJ Equations}

    Here we are interested in equations of the form
    \begin{equation}\label{eq:globeik}\tag{\ensuremath{\Hcal}J\ensuremath{a}}
        \Hcal(x,Du)=a,\qquad\text{on }\Gamma,
    \end{equation}
    where $a\in\Rds$. This notation synthetically indicates the family of Hamilton--Jacobi equations
    \begin{equation}\label{eq:eikg}\tag{HJ\textsubscript{\ensuremath{\gamma}}\ensuremath{a}}
        H_\gamma(s,\partial_s U)=a,\qquad\text{on }[0,1],
    \end{equation}
    for $\gamma$ varying in $\Ecal$. This problem is thoroughly analyzed in~\cite{SiconolfiSorrentino18}, where the following definition of solution is given.

    \begin{defin}\label{defsol}
        We say that $w:\Gamma\to\Rds$ is a \emph{viscosity subsolution} to~\cref{eq:globeik} if
        \begin{myenum}
            \item it is continuous on $\Gamma$;
            \item $s\mapsto w(\gamma(s))$ is a viscosity subsolution to~\cref{eq:eikg} in $(0,1)$ for any $\gamma\in\Ecal$.
        \end{myenum}
        We say that $u:\Gamma\to\Rds$ is a \emph{viscosity solution} to~\cref{eq:globeik} if
        \begin{myenum}
            \item it is continuous;
            \item $s\mapsto u(\gamma(s))$ is a viscosity solution of~\cref{eq:eikg} in $(0,1)$ for any $\gamma\in\Ecal$;
            \item\label{stateconst} for every vertex $x$ there is at least one arc $\gamma\in\Gamma_x$ such that
            \begin{equation*}
                H_\gamma(1,\partial_s\varphi(1))\ge a
            \end{equation*}
            for any $\varphi$ that is a constrained $C^1$ subtangent to $s\mapsto u(\gamma(s))$ at 1.
        \end{myenum}
    \end{defin}

    In order to provide a representation formula for solution to~\cref{eq:globeik} we extend the support functions defined in~\cref{eq:sigmadef} to the tangent bundle $T\Gamma$ in the following sense: for any $a\in\Rds$ we set the map $\sigma_a:T\Gamma\to\Rds$ such that
    \begin{mylist}
        \item if $x=\gamma(s)$ for some $\gamma\in\Ecal$ and $s\in(0,1)$ then
        \begin{equation*}
            \sigma_a(x,q)\coloneqq\max\left\{\mu\frac{q\cdot\dot\gamma(s)}{|\dot\gamma(s)|_2^2}:\mu\in\Rds,\,H_\gamma(s,\mu)=a\right\}.
        \end{equation*}
        It is clear that if $\{\mu\in\Rds,\,H_\gamma(s,\mu)=a\}\ne\emptyset$ then, using the support functions defined in~\cref{eq:sigmadef},
        \begin{equation*}
            \sigma_a(x,q)=\left(\sigma_{\gamma,a}^+(s)\frac{q\cdot\dot\gamma(s)}{|\dot\gamma(s)|_2^2}\right)\vee\left(\sigma_{\gamma,a}^-(s)\frac{q\cdot\dot\gamma(s)}{|\dot\gamma(s)|_2^2}\right),
        \end{equation*}
        otherwise we assume that $\sigma_a(x,q)=-\infty$.
        \item If $x\in\Vbf$ and $q\ne0$ then
        \begin{equation*}
            \sigma_a(x,q)\coloneqq\min\max\left\{\mu\frac{q\cdot\dot\gamma(1)}{|\dot\gamma(1)|_2^2}:\mu\in\Rds,\,H_\gamma(1,\mu)=a\right\},
        \end{equation*}
        where the minimum is taken over the $\gamma\in\Gamma_x$ with $\dot\gamma(1)$ parallel to $q$. We assume that $\sigma_a(x,q)=-\infty$ whenever $\{\mu\in\Rds,\,H_\gamma(1,\mu)=a\}=\emptyset$ for any $\gamma\in\Gamma_x$ with $\dot\gamma(1)$ parallel to $q$.
        \item If $x\in\Vbf$ and $q=0$ then
        \begin{equation*}
            \sigma_a(x,q)\coloneqq0.
        \end{equation*}
    \end{mylist}
    Note that the case $x\in\Vbf$, $q\ne0$ is more involved because there is a problem to take into account, namely different arcs ending at $x$ could have parallel tangent vectors, in this case we should have
    \begin{equation*}
        q=\lambda_1\dot\gamma_1(1)=\lambda_2\dot\gamma_2(1),\qquad\text{for arcs $\gamma_1\ne\gamma_2$ and scalars $\lambda_1$, $\lambda_2$}.
    \end{equation*}
    We point out that thanks to~\cref{eq:sigcomp} $\sigma_a$ is a well-defined function in $T\Gamma$.

    We further define
    \begin{equation*}
        a_0\coloneqq\max_{\gamma\in\Ecal}a_\gamma
    \end{equation*}
    and, for $a\ge a_0$, the semidistance on $\Gamma$
    \begin{equation*}
        S_a(y,x)\coloneqq\min\left\{\int_0^T\sigma_a\left(\xi,\dot\xi\right)d\tau:\text{$\xi:[0,T]\to\Gamma$ is a curve from $y$ to $x$}\right\},
    \end{equation*}
    whose importance is highlighted by the next \namecref{subsolchar}.

    \begin{prop}\label{subsolchar}
        A continuous function $w:\Gamma\to\Rds$ is a subsolution to~\cref{eq:globeik} if and only if
        \begin{equation*}
            w(x)-w(y)\le S_a(y,x),\qquad\text{for any }x,y\in\Gamma.
        \end{equation*}
    \end{prop}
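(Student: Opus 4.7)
The plan is to prove the two implications separately, relying on standard convex Hamilton--Jacobi technology pulled back arc-by-arc via \cref{invarccomp,curveascomp}. For the forward implication, assume $w$ is a subsolution. For each arc $\gamma$, \cref{defsol} gives that $w\circ\gamma$ is a viscosity subsolution of $H_\gamma(s,\partial_s U)=a$ on $(0,1)$, and by convexity and coercivity of $H_\gamma$ it is Lipschitz on $[0,1]$ with
\begin{equation*}
\sigma^-_{\gamma,a}(s)\le(w\circ\gamma)'(s)\le\sigma^+_{\gamma,a}(s)\qquad\text{for a.e.\ }s\in[0,1].
\end{equation*}
Given any curve $\xi:[0,T]\to\Gamma$ from $y$ to $x$, I would decompose $[0,T]$ as in \cref{curveascomp} into intervals $\{\ovI_i\}$ with $\xi(\ovI_i)\subseteq\gamma_i([0,1])$ and set $\eta_i=\gamma_i^{-1}\circ\xi|_{\ovI_i}$. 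On each $\ovI_i$, combining the chain rule for absolutely continuous functions with \cref{invarccomp} yields
\begin{equation*}
\frac{d}{d\tau}(w\circ\xi)(\tau)=(w\circ\gamma_i)'(\eta_i(\tau))\,\dot\eta_i(\tau)\le\sigma_a(\xi(\tau),\dot\xi(\tau))\qquad\text{for a.e.\ }\tau,
\end{equation*}
where the inequality uses the formula for $\dot\eta_i$ in \cref{invarccomp} together with the definition of $\sigma_a$ at points interior to an arc. Integrating on each $\ovI_i$, summing by telescoping (allowed by continuity of $w\circ\xi$), and observing that $\sigma_a(\xi,0)=0$ wherever $\xi$ lingers at a vertex produces $w(x)-w(y)\le\int_0^T\sigma_a(\xi,\dot\xi)\,d\tau$; the infimum over curves then gives the required inequality with $S_a(y,x)$.

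For the reverse implication, it suffices to prove that $s\mapsto w(\gamma(s))$ is a viscosity subsolution of $H_\gamma(s,\partial_s U)=a$ on $(0,1)$ for each fixed arc $\gamma$. For $0\le s_1<s_2\le 1$ I would test the hypothesis with the curve $\xi(\tau)=\gamma(s_1+\tau)$ on $[0,s_2-s_1]$, for which $\dot\xi\cdot\dot\gamma/|\dot\gamma|_2^2\equiv 1$ and hence $\sigma_a(\xi,\dot\xi)=\sigma^+_{\gamma,a}(s_1+\tau)$; the time-reversed curve yields $\sigma_a(\xi,\dot\xi)=-\sigma^-_{\gamma,a}(\cdot)$ in the integrand. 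Thus
\begin{equation*}
w(\gamma(s_2))-w(\gamma(s_1))\le\int_{s_1}^{s_2}\sigma^+_{\gamma,a}(s)\,ds,\qquad w(\gamma(s_1))-w(\gamma(s_2))\le-\int_{s_1}^{s_2}\sigma^-_{\gamma,a}(s)\,ds,
\end{equation*}
so $w\circ\gamma$ is Lipschitz on $[0,1]$ and satisfies $H_\gamma(s,(w\circ\gamma)'(s))\le a$ for a.e.\ $s\in(0,1)$. A classical mollification argument for continuous Hamiltonians convex in the momentum variable then upgrades this a.e.\ inequality to the viscosity subsolution property.

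The step I expect to be the main obstacle is the bookkeeping in the forward direction: the decomposition in \cref{curveascomp} can produce countably many intervals accumulating at vertices, so I must justify the termwise integration and summation by combining absolute continuity of $w\circ\xi$ restricted to each piece (inherited from Lipschitz continuity of $w\circ\gamma_i$ and absolute continuity of $\eta_i$) with integrability of the dominating function $\sigma^+_{\gamma,a}\vee(-\sigma^-_{\gamma,a})$ furnished by \cref{sigcont}. A secondary point to verify is that on the set where $\xi$ takes values in $\Vbf$, $\dot\xi$ vanishes almost everywhere, so the vertex portion of $\sigma_a$ contributes nothing to the integral. Once these technicalities are handled, the two directions assemble into the claimed equivalence.
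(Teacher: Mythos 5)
Your proof is correct and follows the standard route: the paper itself does not prove this proposition but defers to \cite{SiconolfiSorrentino18}, where the argument is exactly the one you give — arc-by-arc Lipschitz bounds $\sigma^-_{\gamma,a}\le(w\circ\gamma)'\le\sigma^+_{\gamma,a}$ integrated along curves via the decomposition of \cref{curveascomp} for one direction, and testing with monotone parametrizations of single arcs plus the a.e.-to-viscosity upgrade for convex Hamiltonians for the other. The two technical points you flag (termwise integration over countably many intervals, and $\dot\xi=0$ a.e.\ on $\xi^{-1}(\Vbf)$ so that the vertex branch of $\sigma_a$ is harmless) are genuine but standard, and resolve as you indicate.
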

    \begin{proof}
        See~\cite{SiconolfiSorrentino18}.
    \end{proof}

    The \emph{critical value}, or \emph{Mañé critical value}, is defined as
    \begin{equation*}
        c\coloneqq\min\{a\ge a_0:\text{\cref{eq:globeik} admits subsolutions}\}
    \end{equation*}
    and, if $c>a_0$, it is the unique value such that~\hrefc{} (namely the equation~\cref{eq:globeik} with $a=c$) admits solutions in the sense of \cref{defsol}. This critical value is characterized by being the only $c>a_0$ such that, for all the closed curves $\xi:[0,T]\to\Gamma$,
    \begin{equation}\label{eq:critvalchar}
        \int_0^T\sigma_c\left(\xi,\dot\xi\right)d\tau\ge0
    \end{equation}
    and for at least one simple closed curve the inequality above is an identity. Hereafter $c$ will always denote the critical value for the Eikonal problem.\\
    If $c=a_0$ \cref{eq:critvalchar} still holds, however it is not guaranteed that~\cref{eq:critvalchar} is an identity for some simple closed curve nor that~\hrefc{} admits solutions. For that to be true we require, as in~\cite{SiconolfiSorrentino18}, the following condition:
    \begin{Hassum}[resume]
        \item for any $\gamma\in\Ecal$ with $a_\gamma=c=a_0$ the map $s\mapsto\min\limits_{p\in\Rds}H_\gamma(s,p)$ is constant in $[0,1]$.
    \end{Hassum}
    This, together with~\ref{condqconv}, further implies that if $\gamma\in\Ecal$ is such that $a_\gamma=c$ then
    \begin{equation}\label{eq:ceqa0sig}
        \sigma_{\gamma,a_\gamma}^+=\sigma_{\gamma,a_\gamma}^-,\qquad\text{on }[0,1],
    \end{equation}
    therefore~\cref{eq:critvalchar} is an identity for every $\xi\coloneqq\gamma*\wtgamma$ with $a_\gamma=c$.

    The following set, whose definition is deeply related to the critical value $c$, is crucial for our analysis.

    \begin{defin}
        We call \emph{Aubry set} on $\Gamma$, the closed set $\Acal_\Gamma$ made up of the support of the closed curves $\xi:[0,T]\to\Gamma$ with $\int_0^T\sigma_c\left(\xi,\dot\xi\right)d\tau=0$ and a.e.\ non-vanishing derivative. It follows from our previous discussion on the critical value that the Aubry set is nonempty.\\
        The Aubry set is partitioned into \emph{static classes}, defined as the equivalence classes with respect to the relation
        \begin{equation*}
            \left\{
            \begin{gathered}
                x,y\in\Gamma:\text{$x$ and $y$ are incident to a closed curve $\xi:[0,T]\to\Gamma$ with }\int_0^T\sigma_c\left(\xi,\dot\xi\right)d\tau=0\\
                \text{and a.e.\ non-vanishing derivative}
            \end{gathered}
            \right\}.
        \end{equation*}
    \end{defin}

    \begin{rem}\label{arcaubry}
        It is shown in~\cite{SiconolfiSorrentino18} that under our assumptions the Aubry set consists of the support of a collection of arcs. Moreover, exploiting~\cref{eq:ceqa0sig} it is easy to see that the support of the arcs $\gamma$ with $a_\gamma=c$ is contained in $\Acal_\Gamma$.
    \end{rem}

    Through static classes one can obtain a nice property of critical subsolutions, see \cite[Theorem 7.5]{SiconolfiSorrentino18}. We recall this result below for later mention.

    \begin{prop}\label{subsolstatclass}
        Let $w$ be a subsolution to~\hrefc{} and $\Gamma'$ be a static class of $\Acal_\Gamma$. Then
        \begin{equation*}
            w(x)=w(y)+S_c(y,x),\qquad\text{for every }x,y\in\Gamma'.
        \end{equation*}
    \end{prop}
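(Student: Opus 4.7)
The proof will combine two facts already available: the subsolution characterization \cref{subsolchar}, which says $w(x)-w(y)\le S_c(y,x)$ for every $x,y\in\Gamma$, and the definition of a static class, which furnishes a closed curve through $x$ and $y$ with vanishing $\sigma_c$-action.

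The plan is as follows. Fix $x,y\in\Gamma'$. By definition of static class, there exists a closed curve $\xi:[0,T]\to\Gamma$ with a.e.\ non-vanishing derivative and $\int_0^T\sigma_c(\xi,\dot\xi)\,d\tau=0$ such that $x$ and $y$ are incident to $\xi$, i.e.\ $\xi(t_x)=x$, $\xi(t_y)=y$ for some $t_x,t_y\in[0,T]$. Assuming without loss of generality $t_y\le t_x$, I split $\xi$ into the piece $\xi_1\coloneqq\xi|_{[t_y,t_x]}$ running from $y$ to $x$ and the piece $\xi_2$ obtained by concatenating $\xi|_{[t_x,T]}$ with $\xi|_{[0,t_y]}$, which runs from $x$ to $y$ (here the closedness of $\xi$ and the associativity of $*$ are used).

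Now I exploit the definition of $S_c$ as an infimum of $\sigma_c$-integrals along connecting curves to get
\begin{equation*}
    S_c(y,x)\le\int_{\xi_1}\sigma_c(\xi_1,\dot\xi_1)\,d\tau,\qquad S_c(x,y)\le\int_{\xi_2}\sigma_c(\xi_2,\dot\xi_2)\,d\tau,
\end{equation*}
and summing produces $S_c(y,x)+S_c(x,y)\le\int_0^T\sigma_c(\xi,\dot\xi)\,d\tau=0$. On the other hand, applying \cref{subsolchar} to $w$ twice gives $w(x)-w(y)\le S_c(y,x)$ and $w(y)-w(x)\le S_c(x,y)$, so that $0\le S_c(y,x)+S_c(x,y)$. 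Thus both $S_c(y,x)+S_c(x,y)=0$ and both subsolution inequalities are tight, forcing $w(x)-w(y)=S_c(y,x)=-S_c(x,y)$.

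I do not expect any serious obstacle: the argument is the standard Aubry-set rigidity and all non-trivial ingredients (subsolution bound by $S_c$, characterization of the critical value via closed $\sigma_c$-curves, non-negativity of $S_c(y,x)+S_c(x,y)$ on $\Gamma'$) are either already stated in the excerpt or immediate from the definitions. The only mildly delicate point is the splitting of the closed curve when $y$ and $x$ occur in the ``wrong'' order in the parametrization of $\xi$, which is handled by cyclically reparametrizing (or equivalently concatenating two tail pieces of $\xi$); the a.e.\ non-vanishing derivative assumption in the definition of static class is not needed here but comes along automatically.
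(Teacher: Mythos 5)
Your proof is correct. The paper offers no argument of its own for this proposition --- it simply defers to \cite[Theorem 7.5]{SiconolfiSorrentino18} --- and what you give is the standard Aubry-set rigidity argument: cyclically splitting the zero-action closed curve through $x$ and $y$ yields $S_c(y,x)+S_c(x,y)\le 0$, \cref{subsolchar} applied to $w$ in both directions yields the reverse inequality, and the chain $w(x)-w(y)\le S_c(y,x)=-S_c(x,y)\le w(x)-w(y)$ forces equality throughout. The details you flag (reordering $t_x,t_y$, additivity of the $\sigma_c$-integral over the split, finiteness of $\sigma_c$ since $c\ge a_0$) are all handled or harmless.
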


    The main connection between Aubry set and critical (sub)solutions is given in the next \namecref{eiksol}, whose proof can be easily inferred from~\cite{SiconolfiSorrentino18}.

    \begin{theo}\label{eiksol}
        Let $\Gamma'$ be a closed subset of $\Gamma$, $w\in C(\Gamma)$ be a subsolution to~\hrefc{} and define
        \begin{equation*}
            u(x)\coloneqq\min_{y\in\Gamma'}(w(y)+S_c(y,x)),\qquad\text{for }x\in\Gamma.
        \end{equation*}
        Then $u$ is both a solution in $\Gamma\setminus(\Gamma'\setminus\Acal_\Gamma)$ and the maximal subsolution to~\hrefc{} agreeing with $w$ on $\Gamma'$. In particular, if $\Gamma'\subseteq\Acal_\Gamma$, then $u$ is a solution on the whole $\Gamma$.\\
        Furthermore, if $\Gamma'$ has nonempty intersection with all static classes of the Aubry set, then $u$ is the unique solution in $\Gamma\setminus(\Gamma'\setminus\Acal_\Gamma)$ agreeing with $w$ on $\Gamma'$.
    \end{theo}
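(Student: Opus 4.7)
The plan is to derive all four conclusions by combining the subsolution characterization \cref{subsolchar} with the triangle inequality for the semidistance $S_c$ and the rigidity of critical subsolutions on the Aubry set given by \cref{subsolstatclass}. The first step is to check that $u\equiv w$ on $\Gamma'$: the choice $y=x$ in the infimum yields $u(x)\le w(x)$, while \cref{subsolchar} applied to $w$ gives $w(y)+S_c(y,x)\ge w(x)$ for every $y\in\Gamma'$, hence $u(x)\ge w(x)$. The subsolution property of $u$ follows directly from the triangle inequality: if $y\in\Gamma'$ realizes $u(z)=w(y)+S_c(y,z)$, then $u(x)\le w(y)+S_c(y,x)\le u(z)+S_c(z,x)$, so \cref{subsolchar} applies in the reverse direction. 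Maximality is also immediate: any subsolution $v$ with $v=w$ on $\Gamma'$ satisfies $v(x)\le v(y)+S_c(y,x)=w(y)+S_c(y,x)$ for every $y\in\Gamma'$ by \cref{subsolchar}, whence $v\le u$.

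The heart of the proof is the solution property on $(\Gamma\setminus\Gamma')\cup\Acal_\Gamma$. For an interior point $x_0=\gamma(s_0)$ with $s_0\in(0,1)$ and $x_0\notin\Gamma'$, I would invoke the standard dynamic programming principle: picking $y\in\Gamma'$ realizing $u(x_0)$ and a curve realizing $S_c(y,x_0)$, the triangle inequality gives, on a short piece of this curve ending at $x_0$, an inf-convolution representation of $u$ that forces the local viscosity solution property of \ghrefc{} at $s_0$. For a vertex $x\notin\Gamma'$, the condition \labelcref{stateconst} in \cref{defsol} is extracted from the maximality already established: if it failed, then at every arc $\gamma\in\Gamma_x$ one could find a constrained $C^1$ subtangent violating the inequality, and a standard strict-subsolution perturbation argument would produce a subsolution strictly larger than $u$ near $x$, still agreeing with $w$ on $\Gamma'$, contradicting maximality. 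For $x\in\Acal_\Gamma\cap\Gamma'$, \cref{subsolstatclass} applied to $u$ shows $u(z)=u(x)+S_c(x,z)$ throughout the static class containing $x$, so $u$ coincides locally with the representation built from $x$ alone and one falls back on the previous interior and vertex arguments, exploiting \cref{eq:ceqa0sig} to handle loops of arcs with $a_\gamma=c$.

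For the uniqueness statement, any other solution $v$ agreeing with $w$ on $\Gamma'$ is in particular a subsolution, so $v\le u$ by maximality. For the reverse inequality I would argue in two stages. First, for each static class $\Gamma''$ pick $y\in\Gamma''\cap\Gamma'$ (nonempty by hypothesis); then \cref{subsolstatclass} applied to both $u$ and $v$ yields $u(z)=w(y)+S_c(y,z)=v(z)$ for every $z\in\Gamma''$, so $v=u$ on the entire Aubry set. Second, apply the comparison principle for solutions to \hrefc{} on $\Gamma\setminus\Acal_\Gamma$ with continuous data on the uniqueness set $\Gamma'\cup\Acal_\Gamma$ \parencites{SiconolfiSorrentino18} to conclude $v=u$ globally.

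The hardest part will be the verification of condition \labelcref{stateconst} for $u$ at vertices, since the definition of $\sigma_c$ at a vertex involves a minimum over arcs with parallel tangent vectors while \labelcref{stateconst} asks only for a single arc producing the required inequality; the cleanest route is to package this delicate network-specific bookkeeping into the maximality perturbation argument so that the heavy lifting of \cite{SiconolfiSorrentino18} is invoked once rather than reproduced by hand.
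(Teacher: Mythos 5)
The paper offers no argument for this statement at all---it is introduced with ``whose proof can be easily inferred from~\cite{SiconolfiSorrentino18}''---so your reconstruction is necessarily a different route, and most of it is sound: the identities $u=w$ on $\Gamma'$, the subsolution property via the triangle inequality for $S_c$ and \cref{subsolchar}, the maximality, the interior supersolution property at points of $\Gamma\setminus\Gamma'$ via an optimal curve for $S_c(y,\cdot)$, the vertex condition \labelcref{stateconst} at vertices outside $\Gamma'$ via the bump-perturbation contradiction with maximality (where closedness of $\Gamma'$ is what lets the bump avoid $\Gamma'$), and the uniqueness argument combining maximality, \cref{subsolstatclass} and the comparison result of~\cite{SiconolfiSorrentino18} on uniqueness sets meeting every static class.

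The genuine gap is the supersolution property at points $x\in\Gamma'\cap\Acal_\Gamma$, which is precisely the content that makes $u$ a solution on all of $\Gamma\setminus(\Gamma'\setminus\Acal_\Gamma)$ rather than merely on $\Gamma\setminus\Gamma'$, and on which the ``in particular'' clause rests. Your two tools both fail there. The maximality-perturbation argument is unavailable because any admissible perturbation must raise the value of $u$ at $x$ itself, and $x\in\Gamma'$, so the perturbed function no longer agrees with $w$ on $\Gamma'$ and no contradiction with maximality arises. And \cref{subsolstatclass} only yields the rigidity $u(z)=u(x)+S_c(x,z)$ for $z$ in the static class of $x$, not on a full neighborhood of $x$; even granting that $u$ coincides near $x$ with $u(x)+S_c(x,\cdot)$, you would still need to know that this function satisfies \labelcref{stateconst} of \cref{defsol} \emph{at} $x$, which is exactly the point at issue. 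What is actually needed is the dynamical characterization of Aubry points: a closed curve through $x$ with vanishing $\sigma_c$-integral and a.e.\ non-vanishing derivative supplies a calibrated curve arriving at $x$ along some arc $\gamma\in\Gamma_x$, and it is along that arc that any constrained $C^1$ subtangent $\varphi$ at $1$ must satisfy $H_\gamma(1,\partial_s\varphi(1))\ge c$. This step cannot be ``packaged into the maximality argument''; it is the one place where the Aubry set enters irreducibly, and it must either be proved from the zero-cost cycles or cited explicitly from~\cite{SiconolfiSorrentino18}, as the paper does wholesale.
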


    An analogue result holds for the supercritical case.

    \begin{theo}\label{supcritsol}
        \emph{\cite[Theorem 7.9(ii)]{SiconolfiSorrentino18}} Let $\Gamma'$ be a closed subset of $\Gamma$, $a>c$, $w\in C(\Gamma)$ be a subsolution to~\cref{eq:globeik} and define
        \begin{equation*}
            u(x)\coloneqq\min_{y\in\Gamma'}(w(y)+S_a(y,x)),\qquad\text{for }x\in\Gamma.
        \end{equation*}
        Then $u$ is both the unique solution in $\Gamma\setminus\Gamma'$ and the maximal subsolution to~\cref{eq:globeik} agreeing with $w$ on $\Gamma'$.
    \end{theo}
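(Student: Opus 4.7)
The plan is to derive everything from the subsolution characterization in \cref{subsolchar} together with the strict positivity of $S_a$ on nontrivial closed curves in the supercritical regime $a>c$.

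First I would check that $u\equiv w$ on $\Gamma'$. Since $w$ is a subsolution, \cref{subsolchar} gives $w(x)\le w(y)+S_a(y,x)$ for every $y\in\Gamma$; taking the minimum over $y\in\Gamma'$ yields $w\le u$ on $\Gamma$. For $x\in\Gamma'$, inserting $y=x$ and using $S_a(x,x)=0$ (via the constant curve) gives $u(x)\le w(x)$. Next, to show that $u$ is itself a subsolution to~\cref{eq:globeik}, fix $x,z\in\Gamma$ and pick $y_x^*\in\Gamma'$ attaining the minimum in $u(x)$. The triangle inequality $S_a(y_x^*,z)\le S_a(y_x^*,x)+S_a(x,z)$, which follows directly from concatenation of admissible curves, yields
\begin{equation*}
    u(z)-u(x)\le S_a(y_x^*,z)-S_a(y_x^*,x)\le S_a(x,z),
\end{equation*}
so \cref{subsolchar} applies. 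Maximality is then immediate: any subsolution $v$ agreeing with $w$ on $\Gamma'$ satisfies $v(x)\le v(y)+S_a(y,x)=w(y)+S_a(y,x)$ for every $y\in\Gamma'$, whence $v\le u$.

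The core of the argument is showing that $u$ is a solution on $\Gamma\setminus\Gamma'$ in the sense of \cref{defsol}. I would rely on a dynamic programming principle: thanks to the superlinearity \labelcref{condsuplin} of the $L_\gamma$ and a Tonelli-type existence argument adapted to the network setting (using \cref{invarccomp} and \cref{curveascomp} to decompose candidate curves arc by arc), the infimum defining $S_a(y,x)$ is attained. For $x_0\in\Gamma\setminus\Gamma'$, select $y_0\in\Gamma'$ with $u(x_0)=w(y_0)+S_a(y_0,x_0)$ and an optimal curve $\xi$ from $y_0$ to $x_0$; for every point $z$ on $\xi$ close to $x_0$ one has $u(z)\le w(y_0)+S_a(y_0,z)$ together with $S_a(y_0,x_0)=S_a(y_0,z)+S_a(z,x_0)$, hence $u(x_0)\ge u(z)+S_a(z,x_0)$. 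Combined with the subsolution bound already proved, this forces the supersolution viscosity inequality on the interior of each arc through $x_0$, and, when $x_0$ is a vertex, supplies the state-constraint condition \labelcref{stateconst} via the arc along which $\xi$ terminates at $x_0$.

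For uniqueness in $\Gamma\setminus\Gamma'$ I would exploit that $a>c$ excludes the critical obstruction in~\cref{eq:critvalchar}: $S_a$ is strictly positive on every nonconstant closed curve and there is no analogue of the Aubry set to host extra solutions. Any other solution $\tilde u$ coinciding with $w$ on $\Gamma'$ is in particular a subsolution matching $w$ on $\Gamma'$, so $\tilde u\le u$ by maximality; the reverse inequality follows from a standard supercritical comparison principle, applied locally on each arc and combined with continuity at vertices, since any subtangent to $\tilde u-u$ at an interior maximum in $\Gamma\setminus\Gamma'$ would contradict the strict quasiconvexity \labelcref{condqconv} together with $a>c$. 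The step I expect to be most delicate is the vertex analysis in the dynamic programming argument: proving that optimal trajectories can always be chosen to arrive at a vertex along an arc producing the right constrained subtangent behavior required by \labelcref{stateconst}, and ruling out pathological subtangents coming from arcs not visited by any optimal curve—this hinges on the fine properties of $\sigma_a$ on $T\Gamma$ near vertices and is where most of the network-specific work lies.
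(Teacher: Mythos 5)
The paper does not prove this statement at all: it is imported verbatim from \cite[Theorem 7.9(ii)]{SiconolfiSorrentino18}, so there is no internal argument to compare yours against. That said, most of your outline is sound. The agreement $u\equiv w$ on $\Gamma'$, the subsolution property of $u$ via the triangle inequality for $S_a$ and \cref{subsolchar}, and the maximality claim are all correct and complete as written. The calibrated-curve argument for the solution property on $\Gamma\setminus\Gamma'$ (pick an optimal $y_0\in\Gamma'$ and an optimal curve, derive $u(x_0)\ge u(z)+S_a(z,x_0)$ along it, and read off the supersolution inequality on arcs and the state constraint \labelcref{stateconst} from the arc of arrival) is the standard and correct route; note that at a vertex you only need \emph{one} arc in $\Gamma_x$ to satisfy \labelcref{stateconst}, so there is nothing to rule out on arcs the optimal curve does not use. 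One small misattribution: attainment of the minimum defining $S_a$ is not a Tonelli/superlinearity matter, since $\sigma_a$ is positively $1$-homogeneous in $q$; it comes from reparametrizing to constant speed and the finiteness of the network.

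The genuine gap is the uniqueness claim. First, the device you propose --- a subtangent to $\tilde u-u$ at an interior maximum contradicting \labelcref{condqconv} --- is not a workable viscosity argument: neither $\tilde u$ nor $u$ is differentiable, sub/supertangents do not pass to differences, and in any case comparison would concern a maximum of $u-\tilde u$ (subsolution minus supersolution), not $\tilde u-u$. Second, and more substantively, the actual mechanism that makes uniqueness work for $a>c$ is not a pointwise PDE comparison but the representation formula: one shows that \emph{every} solution $\tilde u$ in $\Gamma\setminus\Gamma'$ satisfies $\tilde u(x)=\min_{y\in\Gamma'}\left(\tilde u(y)+S_a(y,x)\right)$, by constructing from each $x\in\Gamma\setminus\Gamma'$ a backward curve calibrating $\tilde u$ and proving that it reaches $\Gamma'$ after a finite length. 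For that last step, ``nonconstant closed curves have strictly positive $\sigma_a$-cost'' is not enough: a backward calibrated curve could a priori accumulate unbounded length without closing up or reaching $\Gamma'$, so one needs a \emph{uniform} coercivity estimate of the form $A\int_0^T\bigl|\dot\xi\bigr|_2\,d\tau-B\le\int_0^T\sigma_a\bigl(\xi,\dot\xi\bigr)\,d\tau$ --- exactly the content of \cref{sigestgec} in the appendix, which is where $a>c$ is really used. Your sketch neither constructs the backward calibrated curve nor supplies this quantitative bound, so the inequality $\tilde u\ge u$ is not established.
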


    The maximality properties stated in \cref{eiksol,supcritsol} can be generalized as follows:

    \begin{theo}\label{maxsubsol}
        Let $\Gamma'$ be a closed subset of $\Gamma$, $\phi:\Gamma'\to\Rds$ be any continuous function and define
        \begin{equation*}
            w(x)\coloneqq\min_{y\in\Gamma'}(\phi(y)+S_a(y,x)),\qquad\text{for }x\in\Gamma.
        \end{equation*}
        If $a\ge c$ then $w$ is the maximal subsolution to~\cref{eq:globeik} not exceeding $\phi$ on $\Gamma'$ and a solution in $\Gamma\setminus\Gamma'$.
    \end{theo}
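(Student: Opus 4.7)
My plan is to reduce the theorem to \cref{eiksol,supcritsol} by first promoting $w$ itself to a continuous subsolution of \cref{eq:globeik}, and then recognizing that the representation formula is ``idempotent'' when iterated over $\Gamma'$.

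I would begin by checking that $w$ is well defined and continuous on $\Gamma$: since $\Gamma'$ is compact, $\phi$ is continuous, and the semidistance $S_a$ is Lipschitz continuous in both arguments (the content of \cref{minact}), the minimum is attained and varies continuously with $x$. Next, the subsolution character of $w$, together with the bound $w \le \phi$ on $\Gamma'$, follows from the triangle inequality of $S_a$: for any $x, y \in \Gamma$, picking a minimizer $z \in \Gamma'$ of $w(y)$ gives
\begin{equation*}
    w(x) \le \phi(z) + S_a(z, x) \le \phi(z) + S_a(z, y) + S_a(y, x) = w(y) + S_a(y, x),
\end{equation*}
so by \cref{subsolchar} $w$ is a subsolution, and taking $y = x \in \Gamma'$ in the definition of $w$ yields $w(x) \le \phi(x)$. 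Maximality is the dual application of the same characterization: any subsolution $v \le \phi$ on $\Gamma'$ satisfies $v(x) \le v(y) + S_a(y, x) \le \phi(y) + S_a(y, x)$ for every $y \in \Gamma'$, and minimization over $y$ yields $v \le w$.

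The step I expect to be most delicate is the viscosity \emph{solution} property on $\Gamma \setminus \Gamma'$, including condition \labelcref{stateconst} of \cref{defsol} at vertices lying outside $\Gamma'$, which I would rather not reverify from scratch. My plan is to feed the continuous subsolution $w$ (restricted to $\Gamma'$) back into \cref{eiksol} when $a = c$ or into \cref{supcritsol} when $a > c$. Those results produce
\begin{equation*}
    u(x) \coloneqq \min_{y \in \Gamma'}\bigl(w(y) + S_a(y, x)\bigr),
\end{equation*}
already certified as a solution on a set containing $\Gamma \setminus \Gamma'$. Identifying $u$ with $w$ closes the argument: $u \le w$ is immediate from $w \le \phi$ on $\Gamma'$, while $u \ge w$ follows by unwinding $w(y)$ inside $u(x)$ and invoking once more the triangle inequality for $S_a$. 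The only real care needed is to package the cases $a = c$ and $a > c$ uniformly, but since both reference theorems yield solutions on sets containing $\Gamma \setminus \Gamma'$, this causes no friction.
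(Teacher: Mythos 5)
Your argument is correct, and since the paper's own ``proof'' of this statement is a bare citation to the reference, there is no internal route to compare it against; what you supply is a self-contained derivation from results the paper does state, which is a genuine addition. Concretely: you get the subsolution property and the maximality of $w$ directly from \cref{subsolchar} via the triangle inequality for $S_a$, and you obtain the solution property on $\Gamma\setminus\Gamma'$ by feeding the already-certified subsolution $w$ back into \cref{eiksol} (case $a=c$) or \cref{supcritsol} (case $a>c$) and identifying the resulting $u$ with $w$ through the idempotence of the Hopf--Lax formula. Every step checks out. Two points of hygiene are worth fixing. First, your reference for the continuity of $w$ is off: \cref{minact} concerns the time-dependent minimal action $h_T$, not the semidistance $S_a$; the (uniform) Lipschitz continuity of $(y,x)\mapsto S_a(y,x)$ follows instead from the triangle inequality together with \cref{subsolchar,subsollip}, since for each fixed $y$ the map $x\mapsto S_a(y,x)$ is itself a subsolution to~\cref{eq:globeik}. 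Second, both the bound $w\le\phi$ on $\Gamma'$ and the inequality $u\le w$ silently use $S_a(x,x)\le 0$ (constant curve), and the hypothesis $a\ge c$ is what guarantees, via~\cref{eq:critvalchar} and the monotonicity of $a\mapsto\sigma_a$, that $S_a$ is finite and that in fact $S_a(x,x)=0$; this is the place where $a\ge c$ actually enters your argument before you ever invoke \cref{eiksol,supcritsol}, so it deserves a sentence.
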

    \begin{proof}
        This is a consequence of~\cite{SiconolfiSorrentino18}.
    \end{proof}

    We conclude this part on Eikonal equations with a result about the Lipschitz continuity of the subsolutions.

    \begin{prop}\label{subsollip}
        The subsolutions to~\cref{eq:globeik} are Lipschitz continuous for every $a\ge c$.
    \end{prop}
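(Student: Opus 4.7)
The plan is to leverage Proposition \ref{subsolchar}: if $w$ is a subsolution to~\cref{eq:globeik} then $w(x)-w(y)\le S_a(y,x)$ for all $x,y\in\Gamma$, so by symmetry it suffices to prove that $S_a(y,x)\le K\,d_\Gamma(y,x)$ for some constant $K=K(a)$, since $d_\Gamma$ is equivalent to the Euclidean metric. The Lipschitz estimate for $S_a$ will in turn be reduced to a linear bound $\sigma_a(x,q)\le K\,|q|_2$ on $T\Gamma$.

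First I would verify that the relevant support functions are bounded. Since $a\ge c\ge a_0\ge a_\gamma$ for every $\gamma\in\Ecal$, the sets $\{\mu:H_\gamma(s,\mu)=a\}$ are nonempty for all $s\in[0,1]$, so $\sigma_{\gamma,a}^\pm(s)$ are finite. By \cref{en:sigcont2} of \cref{sigcont} they are continuous on the compact interval $[0,1]$, hence
\begin{equation*}
    M_\gamma\coloneqq\max_{s\in[0,1]}\max\{|\sigma_{\gamma,a}^+(s)|,|\sigma_{\gamma,a}^-(s)|\}<\infty,
\end{equation*}
and taking the maximum over the finite collection $\Ecal$ yields a uniform bound $M$. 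Simultaneously, regularity of each $\gamma$ guarantees $m_\gamma\coloneqq\min_{s\in[0,1]}|\dot\gamma(s)|_2>0$; set $m\coloneqq\min_{\gamma\in\Ecal} m_\gamma>0$.

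Next I would translate these bounds into a pointwise estimate on $\sigma_a$. For $x=\gamma(s)$ with $s\in(0,1)$, the explicit formula in the excerpt gives
\begin{equation*}
    \sigma_a(x,q)\le M\,\frac{|q\cdot\dot\gamma(s)|}{|\dot\gamma(s)|_2^2}\le M\,\frac{|q|_2}{|\dot\gamma(s)|_2}\le\frac{M}{m}\,|q|_2,
\end{equation*}
by Cauchy--Schwarz. At a vertex $x\in\Vbf$ with $q\ne0$ the same inequality holds arc by arc, so the ``$\min\max$'' defining $\sigma_a(x,q)$ is also bounded by $(M/m)|q|_2$; at $(x,0)$ with $x\in\Vbf$ we have $\sigma_a(x,0)=0$. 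Setting $K\coloneqq M/m$, the bound $\sigma_a(x,q)\le K|q|_2$ holds on all of $T\Gamma$.

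Finally, for any curve $\xi:[0,T]\to\Gamma$ joining $y$ to $x$, integration gives
\begin{equation*}
    \int_0^T\sigma_a(\xi,\dot\xi)\,d\tau\le K\int_0^T|\dot\xi|_2\,d\tau=K\,\ell(\xi),
\end{equation*}
and minimizing over $\xi$ yields $S_a(y,x)\le K\,d_\Gamma(y,x)$. Combining with \cref{subsolchar} and the equivalence of $d_\Gamma$ with the Euclidean distance on $\Gamma$, we conclude that $w$ is Lipschitz. The main technical nuisance, and the step worth checking carefully, is the behaviour of $\sigma_a$ at vertices where $q\ne0$, because several arcs with parallel tangent vectors may meet there; but since the definition is a $\min\max$ over the relevant arcs, each contributing a term bounded by $K|q|_2$, the uniform estimate survives without change.
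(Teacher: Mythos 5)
Your proof is correct, but it takes a genuinely different route from the paper's. The paper works locally and then glues: it invokes the standard one-dimensional result that viscosity subsolutions of the coercive equation \cref{eq:eikg} are Lipschitz on each arc (citing Bardi--Capuzzo-Dolcetta), uses the Lipschitz continuity of $\gamma^{-1}$ with respect to $d_\Gamma$, and then concatenates the arc-wise estimates along a geodesic passing through finitely many vertices. You instead go through the metric characterization of \cref{subsolchar} and reduce everything to the linear bound $\sigma_a(x,q)\le K|q|_2$ on $T\Gamma$, which follows from the finiteness and continuity of $\sigma_{\gamma,a}^\pm$ (\cref{sigcont}, plus \cref{eq:sigcomp} to handle $\sigma^-$, plus the finiteness of $\Ecal$ and the non-vanishing of $|\dot\gamma|_2$). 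Both arguments ultimately rest on the coercivity \labelcref{condsuplin} — you through the boundedness of the level sets defining $\sigma_{\gamma,a}^\pm$, the paper through the cited Lipschitz estimate — and both need the geodesic structure of $d_\Gamma$, you to identify $\inf_\xi\int|\dot\xi|_2$ with $d_\Gamma(y,x)$, the paper to decompose a geodesic into arc segments. Your version has the advantage of being self-contained within the paper's toolkit and of producing an explicit constant $K=M/m$; the one mild point worth spelling out (which you essentially handle) is that the pointwise bound $\sigma_a(\xi,\dot\xi)\le K|\dot\xi|_2$ holds a.e.\ along an arbitrary curve, including on the set where $\xi$ sits at a vertex, where a.e.\ either $\dot\xi=0$ and $\sigma_a(x,0)=0$ by definition, or the set is negligible.
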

    \begin{proof}
        Fix $a\ge c$ and let $w$ be a subsolution to~\cref{eq:globeik}. By definition $w\circ\gamma$ is a subsolution to~\cref{eq:eikg} for any $\gamma\in\Ecal$, therefore \cite[Proposition 4.1]{BardiCapuzzo-Dolcetta97} yields that $w\circ\gamma$ is Lipschitz continuous. We further have by \cite[Lemma 3.1]{PozzaSiconolfi23} that $\gamma^{-1}$ is Lipschitz continuous with respect to the geodesic distance $d_\Gamma$ on $\Gamma$. The arcs $\gamma$ are finitely many, hence there is an $\ell>0$ such that both $w\circ\gamma$ and $\gamma^{-1}$ are $\ell$--Lipschitz continuous for all $\gamma\in\Ecal$. It follows that, given $z,z'\in\gamma([0,1])$ for some arc $\gamma$,
        \begin{equation}\label{eq:subsollip1}
            \left|w(z)-w\left(z'\right)\right|\le\ell\left|\gamma^{-1}(z)-\gamma^{-1}\left(z'\right)\right|\le\ell^2d_\Gamma\left(z,z'\right).
        \end{equation}
        Now let $x,y\in\Gamma$ and set a sequence $\{x_i\}_{i=0}^{n+1}$ such that $x_0=x$, $x_{n+1}=y$, $x_i$ is a vertex if $i\notin\{0,n+1\}$, $x_i$ and $x_{i+1}$ are incident to the same arc for every $i\in\{0,\dotsc,n\}$, and
        \begin{equation*}
            d_\Gamma(x,y)=\sum_{i=0}^n d_\Gamma(x_i,x_{i+1}).
        \end{equation*}
        We point out that such a sequence always exists since there is at least one geodesic linking $x$ and $y$. Exploiting~\cref{eq:subsollip1} we get
        \begin{equation*}
            |w(x)-w(y)|\le\sum_{i=0}^n|w(x_i)-w(x_{i+1})|\le\sum_{i=0}^n\ell^2d_\Gamma(x_i,x_{i+1})=\ell^2d_\Gamma(x,y).
        \end{equation*}
        $x$, $y$, $w$ and $a$ are arbitrary, thus this proves our claim.
    \end{proof}

    \subsection{Time-Dependent HJ Equations}\label{timeHJsec}

    In this \lcnamecref{timeHJsec} we focus on the following time-dependent problem on $\Gamma$:
    \begin{equation}\label{eq:globteik}\tag{\ensuremath{\Hcal}JE}
        \partial_t v(x,t)+\Hcal(x,Dv)=0,\qquad\text{on }\Gamma\times(0,\infty).
    \end{equation}
    This notation synthetically indicates the family (for $\gamma$ varying in $\Ecal$) of Hamilton--Jacobi equations
    \begin{equation}\label{eq:teikg}\tag{HJ\textsubscript{\ensuremath{\gamma}}E}
        \partial_t U(s,t)+H_\gamma(s,\partial_s U(s,t))=0,\qquad\text{on }\Qcal.
    \end{equation}

    Following~\cite{ImbertMonneau17} we call \emph{flux limiter} any function $x\mapsto c_x$ from $\Vbf$ to $\Rds$ satisfying
    \begin{equation*}
        c_x\ge\max_{\gamma\in\Gamma_x}a_\gamma,\qquad\text{for any }x\in\Vbf.
    \end{equation*}

    The definition of (sub/super)solutions to~\cref{eq:globteik} given in~\cite{PozzaSiconolfi23,Siconolfi22} is as follows:

    \begin{defin}\label{deftsubsol}
        We say that $w:\Gamma\times\Rds^+\to\Rds$ is a \emph{viscosity subsolution} to~\cref{eq:globteik} with flux limiter $c_x$ if
        \begin{myenum}
            \item it is continuous;
            \item\label{loctsubsol} $(s,t)\mapsto w(\gamma(s),t)$ is a viscosity subsolution to~\cref{eq:teikg} in $\Qcal$ for any $\gamma\in\Ecal$;
            \item\label{fluxcond} for any $T\in(0,\infty)$ and vertex $x$, if $\psi$ is a $C^1$ supertangent to $w(x,\cdot)$ at $T$ then $\partial_t\psi(T)\le-c_x$.
        \end{myenum}
    \end{defin}

    \begin{defin}\label{deftsupsol}
        We say that $v:\Gamma\times\Rds^+\to\Rds$ is a \emph{viscosity supersolution} to~\cref{eq:globteik} if
        \begin{myenum}
            \item it is continuous;
            \item\label{loctsupsol} $(s,t)\mapsto v(\gamma(s),t)$ is a viscosity supersolution to~\cref{eq:teikg} in $\Qcal$ for any $\gamma\in\Ecal$;
            \item\label{tstateconst} for every vertex $x$ and $T\in(0,\infty)$, if $\psi$ is a $C^1$ subtangent to $v(x,\cdot)$ at $T$ such that $\partial_t\psi(T)<-c_x$, then there is a $\gamma\in\Ecal$ such that $\gamma(1)=x$ and
            \begin{equation*}
                \partial_t\varphi(1,T)+H_\gamma(1,\partial_s\varphi(1,T))\ge0
            \end{equation*}
            for any $\varphi$ that is a constrained $C^1$ subtangent to $(s,t)\mapsto v(\gamma(s),t)$ at $(1,T)$. We stress out that this condition does not require the existence of constrained subtangents.
        \end{myenum}
        We say that $u:\Gamma\times\Rds^+\to\Rds$ is a \emph{viscosity solution} to~\cref{eq:globteik} if it is both a viscosity subsolution and supersolution.
    \end{defin}

    We also have a result concerning the existence of solutions.

    \begin{theo}\label{exunsolt}
        \emph{\cite[Theorem 6.7]{PozzaSiconolfi23}} Given an initial datum $\phi\in C(\Gamma)$ and a flux limiter $c_x$, \cref{eq:globteik} admits a unique solution $v$ with flux limiter $c_x$ such that $v(0,x)=\phi(x)$ for every $x\in\Gamma$.
    \end{theo}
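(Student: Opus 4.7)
The plan is to build the unique solution via a Lax--Oleinik type value function, then derive uniqueness through a comparison principle; the original proof in~\cite{PozzaSiconolfi23} essentially follows this pattern, and I would reconstruct it along the following lines.

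First, for \emph{existence}, I would define the candidate $v(x,t)$ as a value function
\begin{equation*}
    v(x,t)\coloneqq\inf\Bigl\{\phi(\xi(0))+\mathcal L_t(\xi):\xi:[0,t]\to\Gamma,\,\xi(t)=x\Bigr\},
\end{equation*}
where the admissible action $\mathcal L_t(\xi)$ is computed by splitting $[0,t]$ via the interval decomposition of \cref{curveascomp}. On each subinterval $\overline I_i$ where $\xi(\overline I_i)\subseteq\gamma_i([0,1])$ one pays the local Lagrangian cost $\int_{\overline I_i}L_{\gamma_i}(\eta_i,\dot\eta_i)\,d\tau$ with $\eta_i=\gamma_i^{-1}\circ\xi|_{\overline I_i}$, and on the residual set $E=\xi^{-1}(\Vbf)$ (where $\xi$ sits at a vertex $x$) one pays the flux-limiter cost $c_x$ per unit time. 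This is precisely the action under which the flux limiter enters as a ``dwell cost'' at vertices. Standard superlinearity (\labelcref{condsuplin}) together with \labelcref{condcont,condconv} gives tightness/lower semicontinuity, so minimizers exist and $v$ is finite and continuous.

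The next step is to verify that $v$ satisfies each of the three clauses of \cref{deftsubsol,deftsupsol}. On the interior of arcs, (ii) of both definitions follows from the classical dynamic programming principle applied to~\cref{eq:teikg}, using the one-arc Lagrangian $L_\gamma$. Condition \labelcref{fluxcond}, that time supertangents at a vertex $x$ obey $\partial_t\psi(T)\le-c_x$, is exactly the consequence of admitting the trivial curve ``stay at $x$ and pay $c_x$''; this forces the upper bound on the time derivative. Condition \labelcref{tstateconst} is the more delicate viscosity condition on the vertex trace: if no constrained subtangent with the required Hamiltonian inequality existed on any incident arc, one could slightly perturb a candidate minimizer so as to move off the vertex, strictly decreasing the action, contradicting that $v$ is a value function; this is the standard optimality-at-a-vertex argument, done separately for each arc $\gamma\in\Gamma_x$.

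For \emph{uniqueness}, I would prove a comparison principle: if $w$ is a subsolution and $u$ a supersolution with $w(\cdot,0)\le u(\cdot,0)$ and with the same flux limiter $c_x$, then $w\le u$ on $\Gamma\times\Rds^+$. Away from vertices this is the classical doubling-of-variables argument, combined with the Lipschitz bound on subsolutions provided (in stationary form) by \cref{subsollip} and its time-dependent analogue. The delicate point is at a vertex $x$, where one compares $w(x,T)$ and $u(x,T)$: if the maximum of $w-u$ is reached at a vertex, the supertangent inequality for $w$ gives $\partial_t\psi\le-c_x$, while \labelcref{tstateconst} for $u$ yields $\partial_t\psi\ge-c_x$ coupled with a Hamiltonian inequality on some arc, and the two are reconciled via a test function constrained to one arc; this rules out a strict interior maximum.

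The main obstacle is precisely this comparison step at the vertices: ordinary viscosity doubling has to be adapted so that the penalization separates only along one arc at a time, and one must use the asymmetry between \cref{deftsubsol} (which bounds subsolution time derivatives at every vertex) and \cref{deftsupsol} (which only requires an admissible arc when $\partial_t\psi<-c_x$) to close the argument. Once comparison is established, uniqueness is immediate, and the value function built in the first step witnesses existence, completing the proof.
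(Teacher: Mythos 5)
A point of order first: the paper does not prove this statement at all --- \cref{exunsolt} is imported verbatim as \cite[Theorem 6.7]{PozzaSiconolfi23}, so there is no in-paper proof to compare against. Your outline is nevertheless consistent with the machinery that result rests on: the candidate solution is exactly the value function~\cref{eq:vft} for the network Lagrangian $L$, in which the flux limiter enters through the prescription $L(x,0)=-c_x$ at vertices. Two corrections on the existence half. The dwell cost at a vertex is $-c_x$ per unit time, not $c_x$; your subsequent derivation of \ref{fluxcond} (staying at $x$ on $[T-h,T]$ gives $v(x,T)\le v(x,T-h)-c_xh$, hence $\partial_t\psi(T)\le-c_x$ for any supertangent $\psi$) is consistent with the correct sign, but the phrase ``pays the flux-limiter cost $c_x$ per unit time'' should be fixed. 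Moreover, lower semicontinuity of the action is not quite ``standard'' here, because $L$ is discontinuous in $q$ at vertices; it holds because the admissibility constraint $c_x\ge\max_{\gamma\in\Gamma_x}a_\gamma$ forces $L(x,0)=-c_x\le-a_\gamma\le L_\gamma(1,0)$, so that $L$ is lower semicontinuous in the velocity variable. This is precisely where the lower bound on the flux limiter is used, and it deserves to be said.

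The genuine gap is the uniqueness half. You correctly identify the comparison principle at vertices as ``the main obstacle,'' but then only assert that doubling of variables ``has to be adapted''; that adaptation is the entire content of the well-posedness theory on networks and does not follow from the classical argument: the quadratic penalization cannot separate points lying on different arcs through the same vertex, and the supersolution condition \ref{tstateconst} only provides an inequality on \emph{one} incident arc, which a naive doubling has no way to select. The proofs in the literature either construct a bespoke vertex test function (\cite{ImbertMonneau17}, for junctions) or, as in \cite{Siconolfi22,PozzaSiconolfi23}, bypass doubling altogether via sub- and super-optimality principles along curves: every subsolution with the given initial datum is dominated by the value function by integrating the subsolution inequality along admissible curves, and every supersolution dominates it because condition \ref{tstateconst} allows one to produce a curve along which the supersolution decreases at least at the Lagrangian rate. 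As written, your uniqueness step states the goal rather than proving it, so the proposal is an accurate road map for the existence part but incomplete where the theorem is actually hard.
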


    Hereafter we will usually assume that it is given a flux limiter $c_x$ for any $x\in\Vbf$. In view of the previous \namecref{exunsolt} we define, for every $t\in\Rds^+$, the nonlinear operator $\Scal(t)$ on $C(\Gamma)$ such that, for each $\phi\in C(\Gamma)$, $\Scal(t)\phi$ is the unique solution at the time $t$ to~\cref{eq:globteik} with initial datum $\phi$ and flux limiter $c_x$. The family of operators $\{\Scal(t)\}_{t\in\Rds^+}$ form a semigroup whose main properties are summarized below.

    \begin{prop}\label{Scalprop}\leavevmode
        \begin{myenum}
            \item\emph{(Semigroup property)} For any $t,t'\in\Rds^+$ we have $\Scal(t+t')=\Scal(t)\circ\Scal(t')$;
            \item\emph{(Monotonicity property)} for every $\phi_1,\phi_2\in C(\Gamma)$ such that $\phi_1\le\phi_2$ in $\Gamma$
            \begin{equation*}
                \Scal(t)\phi_1\le\Scal(t)\phi_2,\qquad\text{for any }t\in\Rds^+;
            \end{equation*}
            \item for any $\phi\in C(\Gamma)$, $t\in\Rds^+$ and $a\in\Rds$, $\Scal(t)(\phi+a)=\Scal(t)\phi+a$.
        \end{myenum}
    \end{prop}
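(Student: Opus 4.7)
The three properties all follow by uniqueness (\cref{exunsolt}) together with the observation that each operation in question preserves the class of solutions with the appropriate initial datum and flux limiter. The plan is to verify for each item that the candidate function is a solution to~\cref{eq:globteik} with the prescribed flux limiter and initial datum, then invoke uniqueness.

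For the semigroup property, fix $\phi\in C(\Gamma)$ and $t,t'\in\Rds^+$. I would set $w(x,s)\coloneqq\Scal(s+t')\phi(x)$ for $s\in\Rds^+$. Since~\cref{eq:globteik} and the flux limiter conditions in \cref{deftsubsol,deftsupsol} are autonomous in time, a time-translation of a solution is again a solution: the conditions on $\partial_t\psi$ at a vertex and the local viscosity conditions on $(s,t)\mapsto\Scal(t+t')\phi(\gamma(s),t)$ are unaffected by replacing $t$ with $t+t'$. Hence $w$ is a solution to~\cref{eq:globteik} with flux limiter $c_x$ and initial datum $w(\cdot,0)=\Scal(t')\phi$. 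By the uniqueness part of \cref{exunsolt}, $w(x,s)=\Scal(s)(\Scal(t')\phi)(x)$; evaluating at $s=t$ yields \textbf{(i)}.

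For the monotonicity property, I would appeal to the comparison principle underlying \cref{exunsolt}: the uniqueness of solutions with prescribed initial datum and flux limiter is obtained in~\cite{PozzaSiconolfi23} via a comparison theorem stating that a subsolution $w_1$ and a supersolution $w_2$ with $w_1(\cdot,0)\le w_2(\cdot,0)$ satisfy $w_1\le w_2$ on $\Gamma\times\Rds^+$. Since $\Scal(t)\phi_1$ is a subsolution with initial datum $\phi_1$ and $\Scal(t)\phi_2$ is a supersolution with initial datum $\phi_2\ge\phi_1$, the inequality $\Scal(t)\phi_1\le\Scal(t)\phi_2$ is immediate.

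For \textbf{(iii)}, I would observe that if $v$ solves~\cref{eq:globteik} with initial datum $\phi$ and flux limiter $c_x$, then $v+a$ trivially solves~\cref{eq:globteik} as well (the equation is invariant under adding constants since $\partial_t(v+a)=\partial_t v$ and $D(v+a)=Dv$), with initial datum $\phi+a$. The flux limiter conditions at vertices in \cref{deftsubsol}\labelcref{fluxcond} and \cref{deftsupsol}\labelcref{tstateconst} depend only on $\partial_t\psi$ for supertangents/subtangents $\psi$, and adding a constant to $v$ simply shifts $\psi$ by the same constant, leaving $\partial_t\psi$ unchanged; similarly the constrained subtangent condition for the spatial part in \labelcref{tstateconst} is preserved. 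Hence $v+a=\Scal(t)\phi+a$ coincides with $\Scal(t)(\phi+a)$ by uniqueness.

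The only delicate point is checking that the vertex-type conditions really are preserved in each of the three arguments; once that mechanical verification is carried out, everything reduces to the uniqueness statement in \cref{exunsolt}, so I do not expect any genuine obstacle.
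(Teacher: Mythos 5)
Your argument is correct, but it takes a genuinely different route from the paper. The paper's proof is a one-liner: all three properties are read off directly from the Lax--Oleinik representation formula \cref{eq:vft}, which is stated immediately after the proposition. The semigroup property is the dynamic programming principle obtained by splitting a curve at time $t'$, monotonicity is immediate because $\phi_1\le\phi_2$ makes every competitor for $(\Scal(t)\phi_1)(x)$ at most the corresponding competitor for $(\Scal(t)\phi_2)(x)$, and additivity of constants is a shift of the minimum. You instead work at the PDE level: you verify that time translates and constant shifts of solutions remain solutions (including the vertex conditions of \cref{deftsubsol,deftsupsol}, which are indeed autonomous and invariant under adding constants) and then invoke the uniqueness in \cref{exunsolt}. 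This is sound, and it has the virtue of not needing the representation formula at all; its cost is item \textbf{(ii)}, where uniqueness alone does not suffice and you must import the full comparison principle from~\cite{PozzaSiconolfi23} --- a statement the present paper never needs to invoke, since monotonicity of a minimum over a fixed set of curves is immediate. So your approach trades a one-line computation for a mechanical verification of the vertex conditions plus a strictly stronger external input; both are valid, but the paper's route is shorter and self-contained given \cref{eq:vft}.
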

    \begin{proof}
        The proof of this \namecref{Scalprop} is trivial in view of the formula~\cref{eq:vft} given below.
    \end{proof}

    We will provide a representation formula for solution to~\cref{eq:globteik} using a Lagrangian defined on the whole tangent bundle $T\Gamma$ of the network, namely the map $L:T\Gamma\to\Rds$ such that
    \begin{mylist}
        \item if $x=\gamma(s)$ for some $\gamma\in\Ecal$ and $s\in(0,1)$ then
        \begin{equation*}
            L(x,q)\coloneqq L_\gamma\left(s,\frac{q\cdot\dot\gamma(s)}{|\dot\gamma(s)|_2^2}\right);
        \end{equation*}
        \item if $x\in\Vbf$ and $q\ne0$ then
        \begin{equation*}
            L(x,q)\coloneqq\min L_\gamma\left(1,\frac{q\cdot\dot\gamma(1)}{|\dot\gamma(1)|_2^2}\right),
        \end{equation*}
        where the minimum is taken over the $\gamma\in\Gamma_x$ with $\dot\gamma(1)$ parallel to $q$;
        \item if $x\in\Vbf$ and $q=0$ then
        \begin{equation*}
            L(x,q)\coloneqq-c_x.
        \end{equation*}
    \end{mylist}
    We notice that thanks to~\cref{eq:lagcomp} $L$ is a well-defined function in $T\Gamma$.

    Following~\cite{PozzaSiconolfi23} the operators $\Scal(t)$ can then be represented through the integral formula
    \begin{equation}\label{eq:vft}
        (\Scal(t)\phi)(x)=\min\left\{\phi(\xi(0))+\int_0^t L\left(\xi,\dot\xi\right)d\tau:\text{$\xi$ is a curve with $\xi(t)=x$}\right\}.
    \end{equation}
    We stress out that there exists an optimal curve for $(\Scal(t)\phi)(x)$, see \cite[Theorem 5.2]{PozzaSiconolfi23}.

    \begin{theo}\label{loclipconttsol}
        Given $t_0>0$ there is a positive $\ell_{t_0}$ such that
        \begin{equation}\label{eq:loclipconttsol.1}
            \Gamma\times[t_0,\infty)\ni(x,t)\longmapsto(\Scal(t)\phi)(x)
        \end{equation}
        is $\ell_{t_0}$--Lipschitz continuous for any $\phi\in C(\Gamma)$.
    \end{theo}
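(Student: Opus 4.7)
The plan is to derive time regularity of $v:=\Scal(\cdot)\phi$ at $t\ge t_0$ directly from the Lax--Oleinik formula~\cref{eq:vft}, and then bootstrap to a spatial Lipschitz estimate via the PDE and the superlinearity of $\Hcal$.

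For the upper bound on $v(x,t+h)-v(x,t)$ with $h\in(0,t_0/2]$, I extend any admissible curve for $(\Scal(t)\phi)(x)$ by the constant curve at $x$ on $[t,t+h]$, noting that $L(x,0)$ is bounded in absolute value by a constant $M_0$ depending only on the $L_\gamma$ and on the flux limiter $c_x$; this yields $v(x,t+h)-v(x,t)\le M_0 h$. For the opposite inequality I let $\tilde\xi:[0,t+h]\to\Gamma$ be an optimal curve for $v(x,t+h)$, whose existence is granted by \cite[Theorem 5.2]{PozzaSiconolfi23}; the shift $\tilde\xi|_{[h,t+h]}$ is admissible for $v(x,t)$, and~\cref{eq:vft} gives
\begin{equation*}
v(x,t)-v(x,t+h)\le\phi(\tilde\xi(h))-\phi(\tilde\xi(0))-\int_0^h L(\tilde\xi,\dot{\tilde\xi})\,d\tau.
\end{equation*}
Comparing $v(x,t+h)$ once more with the cost of the constant curve at $x$ and using that $L$ is bounded below (a consequence of~\cref{condsuplin}), I obtain a universal bound on the total length $\int_0^{t+h}|\dot{\tilde\xi}|_2\,d\tau$. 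A quantitative application of~\cref{condsuplin} with a large parameter then forces the length of $\tilde\xi$ on $[0,h]$ to be small with $h$, so the uniform continuity of $\phi$ on the compact network $\Gamma$ controls $\phi(\tilde\xi(h))-\phi(\tilde\xi(0))$; combined with the lower bound on $\int_0^h L(\tilde\xi,\dot{\tilde\xi})\,d\tau$, this produces a time-Lipschitz estimate $|v(x,t)-v(x,t+h)|\le\ell_{t_0}\,h$ on $\Gamma\times[t_0,\infty)$.

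Given the bound on the time variation of $v$, the spatial Lipschitz continuity follows from the PDE. Since $v$ is a viscosity subsolution of $\partial_t v+H_\gamma(s,\partial_s v)=0$ on each arc $\gamma$ and is locally Lipschitz in $t$ with constant $\ell_{t_0}$, for any $C^1$ supertangent $\varphi$ to $(s,t)\mapsto v(\gamma(s),t)$ at an interior point one has $|\partial_t\varphi|\le\ell_{t_0}$, whence $H_\gamma(s,\partial_s\varphi)\le\ell_{t_0}$. Thus, arc by arc, $v(\cdot,t)$ is a subsolution of the Eikonal equation $H_\gamma(s,\partial_s u)=\ell_{t_0}$, and~\cref{condsuplin} bounds $\partial_s(v\circ\gamma)$ uniformly in terms of $\ell_{t_0}$ and $\Hcal$ alone. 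Transporting this estimate through the Lipschitz continuity of $\gamma^{-1}$ with respect to $d_\Gamma$ (\cite[Lemma 3.1]{PozzaSiconolfi23}), exactly as in the proof of~\cref{subsollip}, yields Lipschitz continuity of $v(\cdot,t)$ on $\Gamma$.

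The main obstacle is the lower bound for $v(x,t+h)-v(x,t)$: \emph{a priori} the optimal curve for $v(x,t+h)$ is free to concentrate its motion in the final interval $[0,h]$, so the estimate requires a careful quantitative use of~\cref{condsuplin} to force that curve to remain slow there, together with the uniform continuity of $\phi$ on $\Gamma$. Once the time-Lipschitz bound is secured, the spatial bound is a routine consequence of coercivity.
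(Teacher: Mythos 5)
Your second step (deriving the spatial Lipschitz bound from a time-Lipschitz bound via the subsolution inequality, the coercivity \ref{condsuplin} and the gluing argument of \cref{subsollip}) is sound. The genuine gap is in the first step, exactly where you flag ``the main obstacle'': the lower bound for $v(x,t+h)-v(x,t)$. Your decomposition leaves the term $\phi(\tilde\xi(h))-\phi(\tilde\xi(0))$, which you propose to control by the uniform continuity of $\phi$ together with a smallness estimate for the length of $\tilde\xi$ on $[0,h]$. This cannot close. First, $\phi$ is merely continuous: even if you knew $d_\Gamma(\tilde\xi(0),\tilde\xi(h))\le\kappa h$ (which is itself the hard estimate, cf.\ \cref{lipcur}), the modulus of continuity of $\phi$ evaluated at $\kappa h$ tends to $0$ but is not $O(h)$, so you would obtain at best a $\phi$-dependent modulus of continuity in time --- whereas the statement requires a single constant $\ell_{t_0}$ valid for \emph{every} $\phi\in C(\Gamma)$. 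Second, the ``universal bound on the total length'' is overclaimed: comparing with the constant curve and using superlinearity only gives $\int_0^{t+h}\bigl|\dot{\tilde\xi}\bigr|_2\,d\tau\le\bigl(\operatorname{osc}\phi+(M_0+B)(t+h)\bigr)/A$, which grows linearly in $t$ and depends on $\phi$; and in any case a bound on the length over $[0,t+h]$ does not force the length over the subinterval $[0,h]$ to be $O(h)$, since the curve is free to concentrate its motion there.

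The paper's proof avoids the $\phi$-increment altogether. It writes $(\Scal(t)\phi)(x)=\min_{y\in\Gamma}(\phi(y)+h_t(y,x))$ with $h$ the minimal action \cref{eq:minact}, fixes an optimal $y$ for $(\Scal(t)\phi)(x)$, and compares with the \emph{same} $y$ at $(x',t')$, so the terms $\phi(y)$ cancel and everything reduces to $h_{t'}(y,x')-h_t(y,x)$. Since $t,t'\ge t_0$ and $\Gamma$ is bounded, both triples lie in $A_C$ with $C=\diam\Gamma/t_0$, and \cref{minactlip} gives the bound with a constant depending only on $t_0$. The real work is thus in \cref{lipcur,minactlip}: optimal curves for $h_T(y,x)$ with $(y,x,T)\in A_C$ admit Lagrangian parametrizations and are uniformly Lipschitz, which is what allows one to reparametrize an optimal curve for $h_T$ into a competitor for $h_{T'}$ at cost $O(|T-T'|)$. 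If you want to keep your dynamic-programming structure, the fix is to replace the shift $\tilde\xi|_{[h,t+h]}$ by a reparametrization of the whole of $\tilde\xi$ onto $[0,t]$, so that the comparison curve still starts at $\tilde\xi(0)$ and no increment of $\phi$ appears; but estimating the change of action under that reparametrization again requires the uniform Lipschitz bound on optimal curves, i.e.\ essentially \cref{lipcur}.
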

    \begin{proof}
        We start noticing that
        \begin{equation}\label{eq:loclipconttsol1}
            (\Scal(t)\phi)(x)=\min_{y\in\Gamma}(\phi(y)+h_t(y,x)),\qquad\text{for any }(x,t)\in\Gamma\times\Rds^+,
        \end{equation}
        where $h_t(y,x)$ is defined by~\cref{eq:minact}. Let $(x,t),(x',t')\in\Gamma\times[t_0,\infty)$ and denote by $y$ an optimal point for $(\Scal(t)\phi)(x)$ in~\cref{eq:loclipconttsol1}. It is clear that
        \begin{equation*}
            d_\Gamma(y,x)\le\diam\Gamma\le\frac{\diam\Gamma}{t_0}t,\qquad\text{and}\qquad d_\Gamma\left(y,x'\right)\le\diam\Gamma\le\frac{\diam\Gamma}{t_0}t'.
        \end{equation*}
        We can then apply \cref{minactlip}, yielding that there is a constant $\ell_{t_0}$ such that
        \begin{equation*}
            \left(\Scal\left(t'\right)\phi\right)\left(x'\right)-(\Scal(t)\phi)(x)\le h_{t'}\left(y,x'\right)-h_t(y,x)\le\ell_{t_0}\left(d_\Gamma\left(x,x'\right)+\left|t-t'\right|\right).
        \end{equation*}
        Interchanging the roles of $(x,t)$ and $(x',t')$ in the previous analysis we get that the map in~\cref{eq:loclipconttsol.1} is $\ell_{t_0}$--Lipschitz continuous.
    \end{proof}

    \begin{cor}\label{unifconttsol}
        Given $\phi\in C(\Gamma)$ we have that
        \begin{equation}\label{eq:vftuc.1}
            \Gamma\times\Rds^+\ni(x,t)\longmapsto(\Scal(t)\phi)(x)
        \end{equation}
        is uniformly continuous.
    \end{cor}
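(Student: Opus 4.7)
My plan is to split $\Gamma\times\Rds^+$ into an overlapping cover by a compact initial strip and a tail on which we already have Lipschitz continuity, and then patch the two uniform continuity statements together.

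First, fix any $t_0>0$ (say $t_0=1$). By \cref{loclipconttsol} the map in~\cref{eq:vftuc.1} is $\ell_{t_0}$--Lipschitz continuous on $\Gamma\times[t_0,\infty)$. In particular, for any $(x,t),(x',t')\in\Gamma\times[t_0,\infty)$ with $d_\Gamma(x,x')+|t-t'|<\epsilon/\ell_{t_0}$ we get $|(\Scal(t)\phi)(x)-(\Scal(t')\phi)(x')|<\epsilon$.

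Next, consider the compact set $\Gamma\times[0,2t_0]$ (note that $\Gamma$ is compact as the finite union of the compact sets $\gamma([0,1])$). By \cref{exunsolt} the function $v(x,t)\coloneqq(\Scal(t)\phi)(x)$ is continuous on $\Gamma\times\Rds^+$ (being a viscosity solution to \cref{eq:globteik} with initial datum $\phi$), hence its restriction to $\Gamma\times[0,2t_0]$ is uniformly continuous. Thus there exists $\delta_0>0$ such that $d_\Gamma(x,x')+|t-t'|<\delta_0$ with $t,t'\in[0,2t_0]$ implies $|v(x,t)-v(x',t')|<\epsilon$.

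Now set $\delta\coloneqq\min\{\epsilon/\ell_{t_0},\delta_0,t_0\}$ and take any $(x,t),(x',t')\in\Gamma\times\Rds^+$ with $d_\Gamma(x,x')+|t-t'|<\delta$. Two cases arise: if $\min\{t,t'\}\ge t_0$ both points lie in $\Gamma\times[t_0,\infty)$ and the Lipschitz estimate applies; otherwise $\min\{t,t'\}<t_0$, so $\max\{t,t'\}<t_0+\delta\le 2t_0$ and both points lie in the compact strip $\Gamma\times[0,2t_0]$, where uniform continuity applies. In either case $|v(x,t)-v(x',t')|<\epsilon$, which proves the claim.

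The main (and only) subtlety is that the Lipschitz constant $\ell_{t_0}$ in \cref{loclipconttsol} may blow up as $t_0\to 0^+$, so one cannot simply let $t_0\to 0$ in the Lipschitz statement; this is precisely why the compact-set argument on $\Gamma\times[0,2t_0]$ is needed to handle the behavior near $t=0$.
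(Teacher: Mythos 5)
Your proof is correct and follows essentially the same route as the paper: uniform continuity on a compact initial strip via Heine--Cantor (using the known continuity of the solution), combined with the Lipschitz estimate of \cref{loclipconttsol} on the tail. The only difference is that you make the patching of the two regions explicit via an overlapping cover, which the paper leaves implicit.
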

    \begin{proof}
        It is shown in \cite[Proposition 6.6]{PozzaSiconolfi23} that~\cref{eq:vftuc.1} is continuous, and so is uniformly continuous in $\Gamma\times[0,1]$ by the Heine--Cantor Theorem. \Cref{loclipconttsol} implies the uniform continuity in $\Gamma\times[1,\infty)$ as well, concluding the proof.
    \end{proof}

    \section{Convergence to Steady States}\label{convsec}

    We start this \lcnamecref{convsec} by stating the main results of this article, the proofs will be given later.

    We first assume that the flux limiter is minimal, i.e.,
    \begin{equation}\label{eq:minfluxlim}
        c_x=\max_{\gamma\in\Gamma_x}a_\gamma,\qquad\text{for any }x\in\Vbf.
    \end{equation}
    In this case we retrieve the classic convergence result of~\cite{Fathi98} adapted to our setting.

    \begin{theo}\label{protoltb}
        Given the flux limiter~\cref{eq:minfluxlim} and a $\phi\in C(\Gamma)$, we define
        \begin{equation}\label{eq:protoltb.1}
            u(x)\coloneqq\min_{y\in\Acal_\Gamma}\left(\min_{z\in\Gamma}(\phi(z)+S_c(z,y))+S_c(y,x)\right),\qquad\text{for }x\in\Gamma.
        \end{equation}
        Then $\Scal(t)\phi+ct$ uniformly converges, as $t$ goes to $\infty$, to $u$.
    \end{theo}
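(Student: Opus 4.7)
The plan is to adapt Fathi's asymptotic strategy to the network setting: extract subsequential uniform limits of $\Scal(t)\phi+ct$ via Ascoli--Arzelà, and then identify every such limit with $u$ through matching a $\limsup$ and a $\liminf$ estimate. The equi--Lipschitz property on $\Gamma\times[1,\infty)$ is given by \cref{loclipconttsol}. Uniform boundedness is obtained by picking a solution $u_0$ to~\hrefc{} and checking that $(x,t)\mapsto u_0(x)-ct$ verifies \cref{deftsubsol,deftsupsol} with initial datum $u_0$: the subsolution part reduces to $c_x\le c$, which is guaranteed by~\cref{eq:minfluxlim}, while the supersolution condition \cref{deftsupsol}\labelcref{tstateconst} reduces, after noting that every subtangent at a vertex must have time derivative equal to $-c$, to \cref{defsol}\labelcref{stateconst} applied to $u_0$. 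Then \cref{exunsolt} gives $\Scal(t)u_0=u_0-ct$, and combining \cref{Scalprop} with the sandwich $u_0-\|\phi-u_0\|_\infty\le\phi\le u_0+\|\phi-u_0\|_\infty$ yields $|\Scal(t)\phi+ct-u_0|\le\|\phi-u_0\|_\infty$ on $\Gamma\times\Rds^+$.

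For the upper bound, the Lax--Oleinik formula~\cref{eq:vft} gives
\begin{equation*}
    \Scal(t)\phi(x)+ct\le\phi(\xi(0))+\int_0^t(L+c)(\xi,\dot\xi)\,d\tau
\end{equation*}
for every curve $\xi:[0,t]\to\Gamma$ with $\xi(t)=x$. Fenchel's inequality yields $L+c\ge\sigma_c$ pointwise on $T\Gamma$, while the reparametrization results of \cref{repcurvesec} allow $\int_0^t(L+c)(\xi,\dot\xi)\,d\tau$ to approximate $\int_0^t\sigma_c(\xi,\dot\xi)\,d\tau$ arbitrarily well along any chosen curve. Fix $\epsilon>0$, $z\in\Gamma$ and $y\in\Acal_\Gamma$. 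By \cref{arcaubry} and the definition of the Aubry set there is a closed curve $\eta$ through $y$ with $\int\sigma_c(\eta,\dot\eta)\,d\tau=0$; after reparametrization and iteration we obtain a closed curve through $y$ of arbitrarily large period and vanishing $(L+c)$--action. Concatenating (i) an $S_c$--quasi--minimizer from $z$ to $y$, (ii) the zero--action loop through $y$ for the time needed, and (iii) an $S_c$--quasi--minimizer from $y$ to $x$, yields a competitor of duration $t$ whose action is bounded by $\phi(z)+S_c(z,y)+S_c(y,x)+\mathrm{o}(1)$ as $t\to\infty$. Minimizing over $y\in\Acal_\Gamma$ and $z\in\Gamma$ gives $\limsup_{t\to\infty}(\Scal(t)\phi(x)+ct)\le u(x)$.

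For the lower bound, let $\xi^*_t$ be an optimal curve for $(\Scal(t)\phi)(x)$, whose existence is recalled after~\cref{eq:vft}. From $L+c\ge\sigma_c$ one obtains
\begin{equation*}
    \Scal(t)\phi(x)+ct\ge\phi(\xi^*_t(0))+S_c(\xi^*_t(0),x)\ge w(x),
\end{equation*}
where $w(\cdot)\coloneqq\min_{z\in\Gamma}(\phi(z)+S_c(z,\cdot))$ is the maximal critical subsolution not exceeding $\phi$ provided by \cref{maxsubsol}. The crucial point is that, as $t\to\infty$, $\xi^*_t$ must visit every neighborhood of $\Acal_\Gamma$: superlinearity of the $H_\gamma$'s forces a uniform speed bound on Lax--Oleinik minimizers, and outside an $\epsilon$--neighborhood of $\Acal_\Gamma$ there is a uniform positive gap between $L+c$ and $\sigma_c$ on bounded--speed tangent vectors; combined with the uniform upper bound on $\int(L+c)(\xi^*_t,\dot\xi^*_t)\,d\tau$ from Step~1, this forces $|\{s\in[0,t]:d_\Gamma(\xi^*_t(s),\Acal_\Gamma)\ge\epsilon\}|$ to remain bounded independently of $t$. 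Choosing times $t^*_t\in[0,t]$ with $\xi^*_t(t^*_t)\to y_\infty\in\Acal_\Gamma$ along a subsequence, splitting the integral at $t^*_t$ and using the triangle inequality for $S_c$ give
\begin{equation*}
    \Scal(t)\phi(x)+ct\ge w(\xi^*_t(t^*_t))+S_c(\xi^*_t(t^*_t),x);
\end{equation*}
passing to the limit through the continuity of $w$ (\cref{subsollip}) and of $S_c(\cdot,x)$ yields $\liminf_{t\to\infty}(\Scal(t)\phi(x)+ct)\ge w(y_\infty)+S_c(y_\infty,x)\ge u(x)$.

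Matching bounds give pointwise convergence $\Scal(t)\phi+ct\to u$, and the equi--Lipschitz estimate from Step~1 upgrades it to uniform convergence via Ascoli--Arzelà, since every subsequential uniform limit must coincide with $u$. The main obstacle is the visit--to--Aubry claim used in the lower bound: it rests on a uniform quantitative version of the strict inequality $L+c>\sigma_c$ off $\Acal_\Gamma$ on bounded--velocity subsets of $T\Gamma$, together with a uniform speed bound on Lax--Oleinik minimizers—both classical in the smooth manifold case, but requiring verification within the present network differential structure and, at vertices, a careful handling of the contributions $L(x,0)+c=c-c_x$.
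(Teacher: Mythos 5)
Your global strategy (compactness plus matching one-sided bounds) is reasonable, and your Step 1 correctly reproduces the paper's \cref{weakltb} and the bound \cref{eq:asbound}; note also that under \cref{eq:minfluxlim} the set $\wtVbf$ of \cref{eq:extvert} is empty (if $c_x=\max_{\gamma\in\Gamma_x}a_\gamma=c$ then some $\gamma\in\Gamma_x$ has $a_\gamma=c$, so $x\in\Acal_\Gamma$ by \cref{arcaubry}), which is why the paper obtains \cref{protoltb} as the special case $\wtAcal_\Gamma=\Acal_\Gamma$ of \cref{ltb}. Your lower bound follows the paper's route (\cref{liveoptcurve} plus \cref{subsolltb}), but the mechanism you invoke for the visit-to-Aubry claim is wrong: there is no uniform positive gap between $L+c$ and $\sigma_c$ off $\Acal_\Gamma$, because $L(x,q)+c=\sigma_c(x,q)$ holds at \emph{every} $x=\gamma(s)\in\Gamma\setminus\Vbf$ for $q$ in the Lagrangian-parametrized directions, i.e.\ whenever the corresponding scalar speed lies in $\partial_\mu H_\gamma\bigl(s,\sigma_{\gamma,c}^+(s)\bigr)$ --- this has nothing to do with membership in the Aubry set. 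What is actually true, and what \cref{sigest,sigestaux} prove, is a statement about \emph{cycles}: a curve avoiding $\Acal_\Gamma\setminus\Vbf$ satisfies $\int\sigma_c\ge A\int|\dot\xi|_2-B$ with $A>0$ because every closed curve supported off the Aubry set has strictly positive $\sigma_c$-action, and this is combined with $L(x,0)+c=c-c_x>0$ off $\wtAcal_\Gamma$ to control the slow portions. You flagged this as the main obstacle; the pointwise inequality you hoped to verify is in fact false.

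The more serious gap is in your upper bound. A zero-action loop of ``arbitrarily large period'' is not a zero-action loop of \emph{every prescribed} large duration, and your competitor must have duration exactly $t$. By \cref{timeoflagpar}, the durations for which a given cycle admits a $c$-Lagrangian reparametrization form a compact interval $\left[\underline T(c),\ovT(c)\right]$, which degenerates to a single point whenever the Lagrangian speed is uniquely determined (e.g.\ strictly convex $H_\gamma$ with differentiable Lagrangians); iteration then yields only the discrete set of multiples of that time. Moreover, under \cref{eq:minfluxlim} with $c>a_0$ one has $c_x<c$ at every vertex, so $L(x,0)+c>0$ everywhere and there is no point where one can wait at zero cost. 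Filling an arbitrary waiting time therefore carries a cost, and estimating it is precisely the content of the paper's \cref{approxcurve} (the dilation $\zeta_\rho(t)=\zeta(\rho t)$ costs only $o(1-\rho)$); even with that lemma, the paper does not run your direct concatenation but proves $\limsup\le u$ by showing that the semilimit $\unphi$ of \cref{eq:limsuptsol} is a critical subsolution (\cref{unphisubsol}) that equals $u$ on the periodic static curves, hence on $\Acal_\Gamma$ (\cref{omegaSconstaux,omegaSconst,approxuniflim,critcurveperiod}), and then invoking the maximality in \cref{eiksol}. The direct concatenation argument is reserved for the finite-time statements (\cref{ltbfinite,supltb}), where the extra hypothesis of a vertex with flux limiter equal to the relevant constant provides a genuine free waiting point.
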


    Note that by \cref{eiksol,maxsubsol} $u$ in~\cref{eq:protoltb.1} is the unique solution in $\Gamma$ to~\hrefc{} agreeing with
    \begin{equation}\label{eq:protoltb.2}
        w(x)\coloneqq\min\limits_{y\in\Gamma}(\phi(y)+S_c(y,x)),\qquad\text{for }x\in\Gamma,
    \end{equation}
    on $\Acal_\Gamma$.

    Instead of \cref{protoltb}, we will prove a generalization of it. We proceed assuming, more generally, that
    \begin{equation}\label{eq:fluxcrit}
        c_x\le c,\qquad\text{for any }x\in\Vbf,
    \end{equation}
    and define
    \begin{equation}\label{eq:extvert}
        \wtVbf\coloneqq\{x\in\Vbf\setminus\Acal_\Gamma:c_x=c\}.
    \end{equation}
    We will see later that, roughly speaking, the optimal curves of~\cref{eq:vft} do not distinguish $\wtVbf$ from the Aubry set as the time diverges, so that, defining the \emph{extended Aubry set}
    \begin{equation*}
        \wtAcal_\Gamma\coloneqq\Acal_\Gamma\cup\wtVbf,
    \end{equation*}
    we can obtain the following \namecref{ltb}:

    \begin{theo}\label{ltb}
        Given a flux limiter $c_x$ satisfying~\cref{eq:fluxcrit} and $\phi\in C(\Gamma)$, the function $\Scal(t)\phi+ct$ uniformly converges, as $t$ goes to $\infty$, to
        \begin{equation}\label{eq:ltb.1}
            u(x)\coloneqq\min_{y\in\wtAcal_\Gamma}\left(\min_{z\in\Gamma}(\phi(z)+S_c(z,y))+S_c(y,x)\right),\qquad\text{for }x\in\Gamma.
        \end{equation}
    \end{theo}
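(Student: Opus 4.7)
The approach I would take is dynamical: work directly with the Lax--Oleinik formula \cref{eq:vft}, exploit the fundamental Legendre--Fenchel inequality $L(y,q)+c\ge\sigma_c(y,q)$ on $T\Gamma$, and use the characterization of the Aubry set together with the role of $\wtVbf$ as ``cost-free dwelling points''. The proof will split into a lower bound, an upper bound matching $u$ asymptotically, and a compactness/passage-to-uniform-convergence step.

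First I would set up compactness: by \cref{loclipconttsol} the maps $x\mapsto(\Scal(t)\phi)(x)+ct$ are equi-Lipschitz for $t\ge1$; and a uniform $L^\infty$ bound follows by sandwiching $\phi$ between two critical subsolutions (obtainable from \cref{maxsubsol}) and using \cref{Scalprop}. Arzel\`a--Ascoli then reduces the task to identifying the pointwise limit $u$ uniquely. For the lower bound, from $L(\xi,\dot\xi)+c\ge\sigma_c(\xi,\dot\xi)$ and the definition of $S_c$, for every curve $\xi$ with $\xi(t)=x$,
\begin{equation*}
    \phi(\xi(0))+\int_0^t L(\xi,\dot\xi)\,d\tau+ct\ge\phi(\xi(0))+\int_0^t\sigma_c(\xi,\dot\xi)\,d\tau\ge\phi(\xi(0))+S_c(\xi(0),x),
\end{equation*}
so $(\Scal(t)\phi)(x)+ct\ge w(x)$ where $w$ is the critical subsolution in \cref{eq:protoltb.2}. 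To strengthen this to $\liminf_{t\to\infty}(\Scal(t)\phi+ct)(x)\ge u(x)$, I would argue that as $t\to\infty$, any near-optimal curve for $(\Scal(t)\phi)(x)$ must, after a reparametrization of the kind treated in \cref{repcurvesec}, spend most of its time near $\wtAcal_\Gamma$, so we may split it at some last time where it meets a point $y\in\wtAcal_\Gamma$ and bound each piece by $w(y)$ and $S_c(y,x)$ respectively, exploiting \cref{subsolstatclass} to get equality on $\Acal_\Gamma$.

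For the upper bound I would argue constructively. Fix $x\in\Gamma$ and choose $y^\ast\in\wtAcal_\Gamma$ and $z^\ast\in\Gamma$ realizing the double minimum in \cref{eq:ltb.1}, so $u(x)=\phi(z^\ast)+S_c(z^\ast,y^\ast)+S_c(y^\ast,x)$. I build a competitor $\xi:[0,t]\to\Gamma$ by concatenating three pieces: an optimal minimizer for $h_{T_1}(z^\ast,y^\ast)$ on a time window where, via \cref{liveoptcurve} and the Legendre identity, $\int (L+c)\,d\tau\to S_c(z^\ast,y^\ast)$; a long dwelling piece of length $t-T_1-T_2$ on $\wtAcal_\Gamma$ at $y^\ast$ contributing asymptotically zero to $\int(L+c)$ --- stationary at $y^\ast$ if $y^\ast\in\wtVbf$, using $L(y^\ast,0)+c=-c_{y^\ast}+c=0$, or running back and forth along arcs of the static class of $y^\ast$ in $\Acal_\Gamma$, on which \cref{eq:ceqa0sig} and the characterization~\cref{eq:critvalchar} force $L+c$ to vanish for suitably reparametrized closed minimizers; and finally an optimal curve for $h_{T_2}(y^\ast,x)$. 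This yields
\begin{equation*}
    (\Scal(t)\phi)(x)+ct\le\phi(z^\ast)+S_c(z^\ast,y^\ast)+S_c(y^\ast,x)+o_t(1)=u(x)+o_t(1).
\end{equation*}

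The main obstacle I expect is the dwelling construction for $y^\ast\in\Acal_\Gamma$: one must match the Eulerian Lagrangian cost (which is not reparametrization invariant) with the purely geometric quantity $S_c$, and do so while controlling the total traversal time to equal exactly $t$. Here the reparametrization lemmas of \cref{repcurvesec} and the Lipschitz estimates of \cref{minact} are essential, as is \cref{arcaubry} (arcs with $a_\gamma=c$ sit inside $\Acal_\Gamma$ and carry zero-cost loops). The case $y^\ast\in\wtVbf$ is comparatively easy because the definition $L(y^\ast,0)=-c_{y^\ast}=-c$ makes the stationary curve at $y^\ast$ literally cost-free in the $(L+c)$-integral, which is precisely what motivates enlarging $\Acal_\Gamma$ to $\wtAcal_\Gamma$ under the hypothesis \cref{eq:fluxcrit}.
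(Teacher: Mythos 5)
Your compactness step and your lower bound are sound and essentially coincide with the paper's route: equi-Lipschitz bounds from \cref{loclipconttsol}, the sandwich obtained from \cref{weakltb,Scalprop}, and the splitting of optimal curves at a point of $\wtAcal_\Gamma$ provided by \cref{liveoptcurve}, which is exactly how \cref{subsolltb} yields $(\Scal(t)\phi)(x)+ct\ge u(x)$ for all large $t$. The gap is in the upper bound, and it is precisely the obstacle you flag yourself: the zero-cost dwelling construction at a point $y^\ast$ of a static class contained in $\Acal_\Gamma$ does not work in general. The identity $\int\left(L\left(\zeta,\dot\zeta\right)+c\right)d\tau=\int\sigma_c\left(\zeta,\dot\zeta\right)d\tau=0$ along a closed curve of the Aubry set holds only for a $c$--Lagrangian parametrization, and by \cref{timeoflagpar} the admissible total durations of such a parametrization form a \emph{compact} interval $\left[\underline T(c),\ovT(c)\right]$, which can degenerate to a single point. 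Concatenating $k$ loops therefore only realizes dwell times in $\bigcup_k\left[k\underline T(c),k\ovT(c)\right]$, a set that need not contain $t-T_1-T_2$ for all large $t$; and when $c>a_0$ and the static class contains no vertex with $c_x=c$, there is no point of the class where $L(\cdot,0)+c=0$, so the time mismatch cannot be absorbed by standing still. Slowing the loop down (replacing $\zeta$ by $\zeta_\rho$) costs, by the proof of \cref{approxcurve}, an error which over a dwell interval of length of order $t$ with $1-\rho$ of order $1/t$ is merely bounded, not vanishing, so the naive estimate gives $(\Scal(t)\phi)(x)+ct\le u(x)+O(1)$ rather than $u(x)+o_t(1)$. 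This is why the paper reserves the constructive competitor argument for \cref{ltbfinite,supltb}, where additional hypotheses (a vertex with $c_x=c$ in each static class, or $\ova>c$) make exact zero-cost dwelling possible; your easy case $y^\ast\in\wtVbf$ is fine, but the minimizing $y^\ast$ in~\cref{eq:ltb.1} may well lie in $\Acal_\Gamma$.

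The paper's actual proof replaces your upper bound by a soft argument on the $\omega$-limit set. The upper semilimit $\unphi$ is shown to be a critical subsolution by PDE stability (\cref{unphisubsol}); for every limit $f\in\omega_\Scal(\phi)$ and every critical subsolution $w$, the map $t\mapsto(\Scal(t)f)(\zeta(t))+ct-w(\zeta(t))$ is shown to be constant along periodic static curves (\cref{omegaSconstaux}), and then $t\mapsto f(\zeta(t))-w(\zeta(t))$ is shown to be constant by a contradiction argument that uses the $\rho$-slowdown of \cref{approxcurve} only locally, where for fixed $t$ the linear gain $m(1-\rho)t$ with $m<0$ beats the $o(1-\rho)$ error (\cref{omegaSconst}); combined with \cref{approxuniflim}, which exhibits one time at which $f$ is $\varepsilon$-close to $u$ on each static curve, this forces $\unphi=u$ on $\wtAcal_\Gamma$, and the maximality of $u$ from \cref{eiksol} then gives $\unphi=u$ on all of $\Gamma$. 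As written, your step asserting that the dwelling piece contributes ``asymptotically zero'' to $\int(L+c)\,d\tau$ is unjustified for static classes inside $\Acal_\Gamma$, and some quantitative substitute for \cref{omegaSconst,approxuniflim} is needed to close the argument.
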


    If $w$ is defined by~\cref{eq:protoltb.2}, the limit function given by the above result has the properties of being, thanks to \cref{eiksol,maxsubsol}, the unique solution to~\hrefc{} in $\Gamma\setminus\wtVbf$ agreeing with $w$ on $\wtAcal_\Gamma$, as well as the maximal subsolution in $\Gamma$ equaling $w$ on $\wtAcal_\Gamma$. We stress out that, for the large time behavior, the extended Aubry set on networks plays the same role as the Aubry set on compact manifolds.

    We now assume that there is some $y\in\Vbf$ such that $c_y>c$. Let $x$ be an arbitrary point of $\Gamma$ and $t>T$ two positive times, we consider a curve $\xi:[0,t]\to\Gamma$ from $y$ to $x$ satisfying $\xi(\tau)\equiv y$ in $[0,t-T]$, then, for any $\phi\in C(\Gamma)$, we get
    \begin{equation*}
        (\Scal(t)\phi)(x)+ct\le\phi(y)+\int_0^t\left(L\left(\xi,\dot\xi\right)+c\right)d\tau=\phi(y)+\int_{t-T}^t\left(L\left(\xi,\dot\xi\right)+c\right)d\tau+(c-c_y)(t-T).
    \end{equation*}
    Since $c-c_y<0$, this shows that
    \begin{equation*}
        \lim_{t\to\infty}(\Scal(t)\phi)(x)+ct=-\infty,\qquad\text{for any }\phi\in C(\Gamma).
    \end{equation*}
    We can, however, retrieve convergence as $t\to\infty$, but in contrast to the previous cases, not anymore to critical sub/solutions, but instead to suitable supercritical subsolutions, as shown in the next \namecref{supltb}.

    \begin{theo}\label{supltb}
        Given a flux limiter $c_x$ such that $c_x>c$ for some $x\in\Vbf$ and a $\phi\in C(\Gamma)$ we define
        \begin{equation}\label{eq:supltb.1}
            \ova\coloneqq\max_{x\in\Vbf}c_x>c,\qquad\Vbf_{\ova}\coloneqq\{x\in\Vbf:c_x=\ova\}
        \end{equation}
        and
        \begin{equation*}
            u(x)\coloneqq\min_{y\in\Vbf_{\ova}}\left(\min_{z\in\Gamma}(\phi(z)+S_{\ova}(z,y))+S_{\ova}(y,x)\right),\qquad\text{for }x\in\Gamma.
        \end{equation*}
        There exists a time $T$, depending on $\phi$ and $\ova$, such that $\Scal(t)\phi+\ova t\equiv u$ on $\Gamma$ for any $t\ge T$.
    \end{theo}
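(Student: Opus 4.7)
The plan is to prove the pointwise equality by establishing the two matching bounds $\Scal(t)\phi+\ova t\le u$ and $\Scal(t)\phi+\ova t\ge u$ separately for all $t$ beyond a threshold. Setting $\tilde\Scal(t):=\Scal(t)+\ova t$, which is a semigroup on $C(\Gamma)$ by \cref{Scalprop}, the cornerstone is the fixed-point identity $\tilde\Scal(t)u=u$ for every $t\ge 0$. To verify it, I check that $v(x,t):=u(x)-\ova t$ solves \cref{eq:globteik} with flux limiter $c_x$ and initial datum $u$. The subsolution property combines $u$ being a subsolution to \cref{eq:globeik} at $a=\ova$ (by \cref{maxsubsol}) with the fact that $\partial_t\psi(T)=-\ova\le-c_x$ at every vertex. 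The supersolution property uses that $u$ is a solution on $\Gamma\setminus\Vbf_{\ova}$ by \cref{supcritsol} (giving the stationary condition required at those vertices), and that at $y\in\Vbf_{\ova}$ the escape-clause condition in \cref{deftsupsol} is vacuous since $\partial_t\psi(T)=-\ova$ is not strictly less than $-c_y=-\ova$. Uniqueness from \cref{exunsolt} then gives $\Scal(t)u=u-\ova t$.

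For the upper bound at $x\in\Gamma$, pick $y\in\Vbf_{\ova}$ and $z\in\Gamma$ attaining the minimum in the definition of $u(x)$. Strict supercriticality $\ova>c\ge a_\gamma$ on every arc ensures, via convex duality, that $S_{\ova}(\cdot,\cdot)$ is attained by finite-time curves traversed at the positive optimal speed $\lambda^*(s)$ satisfying the Fenchel equality $L_\gamma(s,\lambda^*(s))+\ova=\lambda^*(s)\,\sigma_{\gamma,\ova}^+(s)$ along each arc, with traversal times $T^*(z,y)$ and $T^*(y,x)$ uniformly bounded by compactness. The concatenated competitor ``$z\to y$ in time $T^*(z,y)$, sit at $y$ where $L(y,0)+\ova=-c_y+\ova=0$, then $y\to x$ in time $T^*(y,x)$'' plugged into \cref{eq:vft} yields $\tilde\Scal(t)\phi(x)\le\phi(z)+S_{\ova}(z,y)+S_{\ova}(y,x)=u(x)$ whenever $t\ge T_0:=\sup_{x,y,z}\bigl(T^*(z,y)+T^*(y,x)\bigr)$.

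Specializing the upper bound to $x=y\in\Vbf_{\ova}$ and combining with the trivial Lax--Oleinik lower bound $\tilde\Scal(t)\phi(y)\ge\min_z(\phi(z)+S_{\ova}(z,y))=u(y)$ already gives $\tilde\Scal(t)\phi\equiv u$ on $\Vbf_{\ova}$ for $t\ge T_0$. For $x\in\Gamma\setminus\Vbf_{\ova}$ and $t\ge 2T_0$, the matching lower bound follows provided any optimal curve $\eta$ for $\tilde\Scal(t)\phi(x)$ hits $\Vbf_{\ova}$ at some time $s^*\ge T_0$: writing $y^*:=\eta(s^*)$ and splitting the action at $s^*$,
\begin{equation*}
    \tilde\Scal(t)\phi(x)\ge\tilde\Scal(s^*)\phi(y^*)+S_{\ova}(y^*,x)=u(y^*)+S_{\ova}(y^*,x)\ge\min_{y\in\Vbf_{\ova}}\bigl(u(y)+S_{\ova}(y,x)\bigr)=u(x),
\end{equation*}
which combined with the upper bound yields equality and lets one take $T:=2T_0$.

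The main obstacle is precisely that last claim: that every optimal curve for $\tilde\Scal(t)\phi(x)$ must enter $\Vbf_{\ova}$ within $[T_0,t]$ for $t$ sufficiently large. Strict supercriticality $\ova>c$, combined with the absence of loops in $\Ecal$ and the fact that $\int_0^T\sigma_{\ova}(\xi,\dot\xi)\,d\tau>0$ along any non-trivial closed curve, prevents extended free residence away from $\Vbf_{\ova}$; together with the uniform upper bound $\tilde\Scal(t)\phi(x)\le u(x)$ this constrains the time $\eta$ can spend in $\Gamma\setminus\Vbf_{\ova}$, and continuity of $\eta$ makes it actually land in $\Vbf_{\ova}$. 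Quantifying this obstruction pins down the exact threshold $T$.
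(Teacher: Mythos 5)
Your architecture mirrors the paper's: a fixed-point identity for $u$ (this is \cref{supweakltb}, and your verification of it is correct), an upper bound via a competitor that travels $z\to y$, idles at $y\in\Vbf_{\ova}$ at zero running cost, and travels $y\to x$ (the paper does this with $\ova$--Lagrangian reparametrizations, \cref{timeoflagpar}, which also supply the uniform bound on traversal times you invoke ``by compactness''), and a lower bound obtained by splitting the action of an optimal curve at a time it visits $\Vbf_{\ova}$. However, there is a genuine gap, and you have located it yourself: the claim that every optimal curve for $(\Scal(t)\phi)(x)$ must reach $\Vbf_{\ova}$ once $t$ exceeds a threshold is not proved, only motivated. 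This is precisely \cref{supliveoptcurve}, and it is not a soft consequence of ``$\int_0^T\sigma_{\ova}(\xi,\dot\xi)\,d\tau>0$ on nontrivial closed curves'': one needs the \emph{uniform} linear estimate $A\int_0^T|\dot\xi|_2\,d\tau-B\le\int_0^T\sigma_{\ova}(\xi,\dot\xi)\,d\tau$ of \cref{sigestgec}, valid with constants independent of the curve. Establishing it requires reparametrizing to unit speed, decomposing the curve combinatorially into finitely many simple closed loops plus a bounded simple remainder, and counting loops (\cref{sigestaux,sigestauxbis,sigest}); only then does the a priori bound $\Scal(t)\phi+\ova t\le u+\alpha$ force the optimal curve to be nearly stationary for most of $[0,t]$, where the running cost $L+\ova$ is bounded below by a positive constant off $\Vbf_{\ova}$, producing the contradiction. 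Without this quantitative step the proof is incomplete.

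A secondary, fixable flaw: your lower bound requires the hitting time $s^*$ to satisfy $s^*\ge T_0$ so that $\tilde\Scal(s^*)\phi(y^*)=u(y^*)$, but the hitting lemma only guarantees \emph{some} visit to $\Vbf_{\ova}$ in $[0,t]$, possibly before $T_0$. You do not need the equality: the one-sided bound $\tilde\Scal(s^*)\phi(y^*)\ge\min_z(\phi(z)+S_{\ova}(z,y^*))=u(y^*)$, which you already noted holds for all $s^*\ge0$ via \cref{eq:vft} and \cref{laglbound}, suffices and removes the constraint on $s^*$. This is how the paper's \cref{supsubsolltb} (modelled on \cref{subsolltb}) argues.
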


    Similarly to what happens in the previous cases, if we set
    \begin{equation*}
        w(x)\coloneqq\min_{z\in\Gamma}(\phi(y)+S_{\ova}(y,x)),\qquad\text{for }x\in\Gamma,
    \end{equation*}
    \cref{supcritsol,maxsubsol} yield that the limit function is the unique solution in $\Gamma\setminus\Vbf_{\ova}$ and the maximal subsolution in $\Gamma$ to~\cref{eq:globeik}, coinciding with $w$ on $\Vbf_{\ova}$.

    Under suitable assumptions, we can establish a finite time convergence result also when~\cref{eq:fluxcrit} holds.

    \begin{theo}\label{ltbfiniteweak}
        Assume that $c>a_0$, let $c_x$ be a flux limiter satisfying~\cref{eq:fluxcrit} and $u$ be defined by~\cref{eq:ltb.1}. If in each static class of $\Acal_\Gamma$ there is a vertex $x$ with $c_x=c$, then, for any $\phi\in C(\Gamma)$, there is a constant $T_\phi$ depending on $\phi$ such that $\Scal(t)\phi+ct\equiv u$ on $\Gamma$ whenever $t\ge T_\phi$.
    \end{theo}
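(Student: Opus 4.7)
The plan is to combine \cref{ltb} with a semigroup monotonicity argument, reducing the problem to exhibiting two finite times $T_1,T_2$ at which $\Scal(T_1)\phi+cT_1\le u$ and $\Scal(T_2)\phi+cT_2\ge u$ respectively. The structural fact driving the reduction is that $u$ from \cref{eq:ltb.1} is itself a stationary state of the renormalized semigroup, $\Scal(t)u+ct\equiv u$ for every $t\ge 0$. Indeed, by the remarks after \cref{ltb}, $u$ is a critical subsolution and a critical solution on $\Gamma\setminus\wtVbf$, so $v(x,t):=u(x)-ct$ satisfies the subsolution conditions of \cref{deftsubsol} (local part from $u$; flux condition $-c\le -c_x$ from \cref{eq:fluxcrit}) and the supersolution conditions of \cref{deftsupsol} at every vertex $x$: if $c_x<c$ the test $\partial_t\psi(T)<-c_x$ is activated and, since $x\notin\wtVbf$, $u$ is a solution at $x$ and supplies the required arc; if $c_x=c$ the test is vacuous. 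Uniqueness (\cref{exunsolt}) then yields $\Scal(t)u=u-ct$. Granted this and the two bounds, \cref{Scalprop} gives for $t\ge T_1$
\begin{equation*}
\Scal(t)\phi=\Scal(t-T_1)\Scal(T_1)\phi\le\Scal(t-T_1)(u-cT_1)=u-ct,
\end{equation*}
and symmetrically $\Scal(t)\phi+ct\ge u$ for $t\ge T_2$; hence $T_\phi:=\max\{T_1,T_2\}$ works.

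For the upper bound I would fix $x$, pick $(y,z)\in\wtAcal_\Gamma\times\Gamma$ achieving the minimum in \cref{eq:ltb.1}, and use the hypothesis to select a vertex $y^*$ with $c_{y^*}=c$ in the static class of $\Acal_\Gamma$ containing $y$ (taking $y^*=y$ if $y\in\wtVbf$). Since $L(y^*,0)+c=0$, waiting at $y^*$ is free for the renormalized cost, so I would build a competitor curve $z\to y^*\to(\text{wait at }y^*)\to y^*\to y\to x$ of total time $T_1$, with each travel segment realizing its $S_c$ distance (feasible for $T_1$ sufficiently large via the Lax--Oleinik analysis of \cite{PozzaSiconolfi23} and the ability to absorb excess time into the wait at $y^*$). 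Applying \cref{subsolstatclass} twice to the critical subsolution $\zeta\mapsto S_c(z,\zeta)$ at $y$ and $y^*$ yields $S_c(z,y^*)=S_c(z,y)+S_c(y,y^*)$ and $S_c(y,y^*)+S_c(y^*,y)=0$, so the renormalized cost along the competitor satisfies
\begin{equation*}
\phi(z)+S_c(z,y^*)+S_c(y^*,y)+S_c(y,x)=\phi(z)+S_c(z,y)+S_c(y,x)=u(x).
\end{equation*}
Compactness of $\Gamma$ makes the choice of $T_1$ uniform in $x$.

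For the lower bound I would invoke a critical subsolution $\phi_c$ that is strict on $\Gamma\setminus\wtAcal_\Gamma$, so that $\int_0^t(L+c)\,d\tau\ge\phi_c(\xi(t))-\phi_c(\xi(0))+\delta t$ for some $\delta>0$ along any curve $\xi$ avoiding $\wtAcal_\Gamma$. For any curve $\xi:[0,t]\to\Gamma$ with $\xi(t)=x$: if $\xi$ hits $\wtAcal_\Gamma$ at some time $s^*$, splitting the action and using $L+c\ge\sigma_c$ together with \cref{subsolchar} and the formula defining $u$ gives
\begin{equation*}
\phi(\xi(0))+\int_0^t(L+c)\,d\tau\ge\phi(\xi(0))+S_c(\xi(0),\xi(s^*))+S_c(\xi(s^*),x)\ge u(x);
\end{equation*}
if $\xi$ avoids $\wtAcal_\Gamma$, the strict subsolution yields a linearly growing lower bound which exceeds the bounded quantity $u(x)-\phi(\xi(0))$ once $t\ge T_2$. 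I expect the main technical obstacle to be the construction of such a $\phi_c$ on the full complement $\Gamma\setminus\wtAcal_\Gamma$: the approach of \cite{SiconolfiSorrentino18} provides strictness on $\Gamma\setminus\Acal_\Gamma$, and one must further absorb the vertices of $\Vbf\setminus\wtVbf$ (where $c_x<c$) into the strict region by a local modification of the subsolution there, exploiting the strict gap $c-c_x>0$ at each such vertex.
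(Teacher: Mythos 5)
Your overall architecture is sound, and two of its three pieces coincide with the paper's. The reduction via the fixed-point identity $\Scal(t)u=u-ct$ (this is \cref{weakltb}, and your verification of it is essentially the paper's) plus monotonicity is exactly how the paper obtains its a priori bound \cref{eq:asbound}. Your upper bound is the paper's argument almost verbatim: the paper proves \cref{ltbfiniteweak} as a corollary of \cref{ltbfinite}, where one picks optimal $(y,z)$ for \cref{eq:ltb.1}, uses \cref{subsolstatclass} to replace $y$ by a vertex of its static class carrying $c_y=c$ (your routing through $y^*$ and the two applications of \cref{subsolstatclass} amount to the same computation), concatenates $z\to y\to(\text{wait})\to y\to x$, and absorbs the surplus time at the free vertex. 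The one thing you gloss over is precisely where the hypothesis $c>a_0$ enters: the travel legs realize $S_c$ (an integral of $\sigma_c$, parametrization-invariant) as a time-parametrized cost $\int(L+c)\,d\tau$ only if they admit a $c$--Lagrangian reparametrization (\cref{laglbound,repsigma}); $c>a_0$ guarantees admissibility of $c$ for every arc, and \cref{timeoflagpar} supplies the uniform time bound that your appeal to ``compactness of $\Gamma$'' is standing in for.

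Your lower bound is a genuinely different route, and it is where the gap lies. The paper replaces $\phi$ by $w\le\phi$ from \cref{eq:protoltb.2}, applies monotonicity, and invokes \cref{subsolltb}, whose engine is \cref{liveoptcurve}: a combinatorial estimate (\cref{sigest}, proved by decomposing curves avoiding $\Acal_\Gamma$ into finitely many loop types with uniformly positive $\sigma_c$--action) showing that the renormalized action of any curve disjoint from $\wtAcal_\Gamma$ grows linearly in $t$. You propose instead the classical strict-subsolution argument. Morally equivalent, but you do not supply the key object: a critical subsolution whose strictness constant is \emph{uniform} on $\Gamma\setminus\Acal_\Gamma$. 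Since arcs outside the Aubry set may have endpoints on it, strictness a priori degenerates near those endpoints, and a curve avoiding $\wtAcal_\Gamma$ need not stay uniformly away from them; on a network the uniformity can be recovered (finitely many arcs, and the constant $A_1>0$ in \cref{sigestaux} is exactly this fact in disguise), but that is the substance of the proof, not a detail. Your proposed fix at the vertices is also misdirected: no ``local modification of the subsolution'' at $x\in\Vbf\setminus\wtVbf$ can help, because the waiting cost there is $L(x,0)+c=c-c_x$, dictated by the flux limiter and invisible to the Fenchel inequality; the correct treatment is simply to bound the stationary portions of the curve directly by $\min\{c-c_x\}>0$, separately from the Fenchel estimate on the moving portions. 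As written, the lower half of your proof is a plan rather than a proof.
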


    Finite time convergence can be also achieved assuming the initial datum to be a subsolution to~\hrefc{}.

    \begin{prop}\label{subsolltb}
        Given a flux limiter $c_x$ satisfying~\cref{eq:fluxcrit} and a subsolution $w$ to~\hrefc{}, we define
        \begin{equation*}
            u(x)\coloneqq\min_{y\in\wtAcal_\Gamma}(w(y)+S_c(y,x)),\qquad\text{for }x\in\Gamma.
        \end{equation*}
        Then there is a time $T_w$ depending on $w$ such that
        \begin{equation}\label{eq:subsolltb.1}
            (\Scal(t)w)(x)+ct=u(x),\qquad\text{for any }x\in\Gamma,\,t\ge T_w.
        \end{equation}
    \end{prop}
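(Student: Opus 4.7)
The plan is to combine (i) a monotonicity property of the semigroup applied to subsolutions, (ii) the asymptotic convergence from \cref{ltb}, and (iii) a Mañé-type linear action--gap lemma for curves avoiding $\wtAcal_\Gamma$. First, since $w$ is a subsolution to \hrefc{}, Fenchel duality gives $L+c\ge\sigma_c$ on $T\Gamma$ under~\cref{eq:fluxcrit}; combined with \cref{subsolchar} and the Lax--Oleinik representation~\cref{eq:vft} this yields the pointwise estimate $w\le\Scal(t)w+ct$ for every $t\ge 0$. Applying $\Scal(s)$ to this inequality and using the monotonicity, semigroup and constant-shift properties of \cref{Scalprop}, the map $t\mapsto\Scal(t)w+ct$ turns out to be non-decreasing; since by \cref{ltb} it converges uniformly to $u$, we obtain $\Scal(t)w+ct\le u$ for every $t\ge 0$. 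Next, for $z\in\wtAcal_\Gamma$, taking $y=z$ in~\cref{eq:subsolltb.1} and noting $S_c(z,z)=0$ gives $u(z)\le w(z)$, while \cref{subsolchar} applied to $w$ gives $w(z)\le u(z)$; so $u=w$ on $\wtAcal_\Gamma$, and bracketing with $w\le\Scal(t)w+ct\le u$ forces $\Scal(t)w(z)+ct=u(z)$ for every $t\ge 0$ and $z\in\wtAcal_\Gamma$.

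The crux is proving the matching lower bound $\Scal(t)w(x)+ct\ge u(x)$ for all $t$ past some threshold $T_w$, which I would read directly off~\cref{eq:vft}. For any competing curve $\xi\colon[0,t]\to\Gamma$ with $\xi(t)=x$, two cases arise. If $\xi$ meets $\wtAcal_\Gamma$ at some time $\tau_0$, set $y_0=\xi(\tau_0)$, split the integral, and use $L+c\ge\sigma_c$, the definition of $S_c$, the subsolution property of $w$ applied on $[0,\tau_0]$, and $y_0\in\wtAcal_\Gamma$ to get
\[
w(\xi(0))+\int_0^t(L+c)\,d\tau\ge w(\xi(0))+S_c(\xi(0),y_0)+S_c(y_0,x)\ge w(y_0)+S_c(y_0,x)\ge u(x).
\]
If instead $\xi([0,t])\subseteq\Gamma\setminus\wtAcal_\Gamma$, I would invoke a linear action--gap lemma providing a $\delta>0$ (independent of $\xi$ and $t$) and a constant $K$ such that $\int_0^t(L+c)\,d\tau\ge\delta\,t-K$. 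Using the boundedness of $w$ and $u$ on the compact network $\Gamma$, choosing $T_w$ large enough, uniformly in $x$, forces $w(\xi(0))+\int_0^t(L+c)\,d\tau\ge u(x)$ as well; taking the infimum over $\xi$ and combining with the upper bound established above closes the proof.

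The main obstacle is the action--gap lemma, a network analogue of the classical Mañé lemma. I would argue by contradiction: if the gap failed, a compactness extraction from curves trapped in the finitely many connected components of the open set $\Gamma\setminus\wtAcal_\Gamma$ would yield in the limit a non-trivial closed curve with $\int\sigma_c\,d\tau=0$ and support contained in $\overline{\Gamma\setminus\wtAcal_\Gamma}$. This contradicts the characterization~\cref{eq:critvalchar} of the critical value together with the definitions of $\Acal_\Gamma$ and of $\wtVbf$---the latter, under~\cref{eq:fluxcrit}, accounting for the only vertices at which a stationary motion carries zero $(L+c)$-cost. Once this lemma is granted, the rest of the argument is a straightforward assembly of the machinery recalled in \cref{HJsec}.
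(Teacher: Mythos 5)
Your overall architecture --- an upper bound $\Scal(t)w+ct\le u$ valid for all times, and a lower bound obtained by splitting competing curves in \cref{eq:vft} according to whether they meet $\wtAcal_\Gamma$, with a linear action--gap estimate for those that do not --- is the same as the paper's, and your chain of inequalities in the case where the curve meets $\wtAcal_\Gamma$ is exactly the paper's. However, your route to the upper bound is circular: you invoke \cref{ltb}, but in the paper the proof of \cref{ltb} rests on \cref{eq:omegalbound}, which is deduced precisely from \cref{subsolltb} (together with \cref{eq:Scal2bound}). The repair is cheap and is what the paper does: since $\wtAcal_\Gamma\supseteq\Acal_\Gamma$, \cref{eiksol} makes $u$ a solution to \hrefc{} in $\Gamma\setminus\wtVbf$, so \cref{weakltb} gives $\Scal(t)u=u-ct$, and $w\le u$ (from \cref{subsolchar}) plus the monotonicity of $\Scal(t)$ yields $\Scal(t)w+ct\le u$ for every $t\ge0$; your preliminary observation that $t\mapsto\Scal(t)w+ct$ is non-decreasing, while correct, then becomes superfluous.

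The second and more substantial issue is the action--gap lemma, which is the real technical content here; it is precisely \cref{liveoptcurve}, stated in the paper immediately before this proposition and proved in \cref{curvecostsec} via \cref{sigest}, so you could simply have cited it. Your proposed proof of it by contradiction and compactness has genuine holes: (i) a sequence of curves avoiding $\wtAcal_\Gamma$ with sublinearly growing action need not produce a non-trivial closed limit curve --- the limit may well be constant, and excluding low-cost \emph{slow} (not merely stationary) motion requires a quantitative bound of the form $L(x,q)+c\ge C_\delta>0$ for $|q|_2$ small and $x\notin\wtAcal_\Gamma$, which is delicate at vertices because $L(x,\cdot)$ is discontinuous at $q=0$ (there $L(x,0)=-c_x$); (ii) transferring $\int\sigma_c\,d\tau=0$ to the limit curve needs a lower semicontinuity argument for the action on the network that you do not supply. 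The paper sidesteps compactness entirely: it decomposes any curve avoiding $\Acal_\Gamma\setminus\Vbf$ into finitely many closed loops, each costing at least a fixed $A_1>0$, plus a bounded simple remainder, which gives the deterministic estimate $\int_0^T\sigma_c(\xi,\dot\xi)\,d\tau\ge A\int_0^T|\dot\xi|_2\,d\tau-B$ of \cref{sigest}; the proof of \cref{liveoptcurve} then splits $[0,t]$ into the set where $|\dot\xi|_2<r_\delta$ (cost at least $C_\delta$ per unit time) and its complement (whose measure is controlled by \cref{eq:sigest.2}). As written, your proposal identifies the right dichotomy but leaves its hardest ingredient unproved.
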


    It has been proved in~\cite{SiconolfiSorrentino18} that the trace on $\Vbf$ of any critical solution on $\Gamma$ is solution to an appropriate discrete functional equation. Conversely, a solution of the same discrete equation on $\Vbf$ can be uniquely extended to a critical solution on $\Gamma$. In this vein, it is reasonable to assume that the limit in~\cref{eq:protoltb.1} is not affected by the values of $\Scal(t)\phi+ct$ outside on the vertices, e.g., the values of the initial datum $\phi$ on $\Gamma\setminus\Vbf$. However this is not generally true, as can be seen in the next \namecref{asympnofocusV}.

    \begin{ex}\label{asympnofocusV}
        Let $\Gamma$ be a network with only two vertices and a single arc $\gamma$ connecting them. If we define
        \begin{equation*}
            H_\gamma(\mu)=\mu^2,\qquad\text{for }\mu\in\Rds,
        \end{equation*}
        it is easy to check that the critical value of the Eikonal problem defined by this Hamiltonian is 0. Moreover we have that $\sigma_{\gamma,0}^+\equiv0\equiv\sigma_{\gamma,0}^-$, thus $S_0(y,x)=0$ for any $x,y\in\Gamma$. Then, given $\phi\in C(\Gamma)$, \cref{protoltb} yields that, as $t$ goes to $\infty$, $\Scal(t)\phi$ uniformly converges to the minimum of $\phi$, independently of where this value is attained.
    \end{ex}

    \subsection{Convergence in Finite Time}\label{convfintime}

    The purpose of this \lcnamecref{convfintime} is to provide the proofs of \cref{subsolltb,ltbfiniteweak}, using some auxiliary results.

    \begin{prop}\label{weakltb}
        Let $c_x$ be a flux limiter satisfying~\cref{eq:fluxcrit} and $u$ be a solution to~\hrefc{} in $\Gamma\setminus\wtVbf$, where $\wtVbf$ is defined as in~\cref{eq:extvert}, then $\Scal(t)u=u-ct$ on $\Gamma\times\Rds^+$.
    \end{prop}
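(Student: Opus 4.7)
The plan is to show that $v(x,t) := u(x) - ct$ is a viscosity solution to~\cref{eq:globteik} with flux limiter $c_x$ and initial datum $u$; then uniqueness from \cref{exunsolt} yields $\Scal(t)u = u - ct$. Continuity of $v$ and the initial condition $v(\cdot,0)=u$ are immediate, so the work reduces to verifying the three items of \cref{deftsubsol,deftsupsol}.

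On arc interiors this is routine. Since $u$ solves~\hrefc{} in $\Gamma\setminus\wtVbf$ and every arc has interior disjoint from $\Vbf$, the map $s\mapsto u(\gamma(s))$ is a viscosity solution to~\cref{eq:eikg} with $a=c$ on $(0,1)$ for every $\gamma\in\Ecal$. Given a $C^1$ super- or subtangent $\varphi(s,t)$ to $(s,t)\mapsto u(\gamma(s))-ct$ at an interior point $(s_0,t_0)\in\Qcal$, the reduction $\tilde\varphi(s):=\varphi(s,t_0)+ct_0$ is a corresponding tangent to $s\mapsto u(\gamma(s))$ at $s_0$, while restriction to the line $\{s=s_0\}$ forces $\partial_t\varphi(s_0,t_0)=-c$. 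Combining these with the Eikonal solution property yields $\partial_t\varphi(s_0,t_0)+H_\gamma(s_0,\partial_s\varphi(s_0,t_0))=0$ in both the sub- and supersolution senses.

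At a vertex $x\in\Vbf$ the subsolution flux condition is also immediate: any $C^1$ supertangent $\psi$ to $v(x,\cdot)$ at $T>0$ must satisfy $\partial_t\psi(T)=-c$, and the standing hypothesis $c_x\le c$ gives $\partial_t\psi(T)\le -c_x$. The heart of the proof is the supersolution flux condition at $x\in\Vbf$. Let $\psi$ be a $C^1$ subtangent to $v(x,\cdot)$ at $T$; again $\partial_t\psi(T)=-c$. If $c_x=c$, the premise $\partial_t\psi(T)<-c_x$ fails and the condition is vacuous. If $c_x<c$, then $x\notin\wtVbf$ by~\cref{eq:extvert}, so $u$ enjoys the full solution property at $x$: there exists $\gamma\in\Gamma_x$ as in \cref{stateconst} of \cref{defsol}. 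For any constrained $C^1$ subtangent $\varphi$ to $(s,t)\mapsto u(\gamma(s))-ct$ at $(1,T)$, the function $\tilde\varphi(s):=\varphi(s,T)+cT$ is a constrained $C^1$ subtangent to $s\mapsto u(\gamma(s))$ at $1$ (the time slice at $T>0$ imposes no one-sided restriction), whence $H_\gamma(1,\partial_s\tilde\varphi(1))\ge c$; combined with $\partial_t\varphi(1,T)=-c$ this produces $\partial_t\varphi(1,T)+H_\gamma(1,\partial_s\varphi(1,T))\ge 0$, as required.

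I expect the main obstacle to be precisely this dichotomy at the vertices: the set $\wtVbf$ is designed so that whenever $u$ may fail to be a solution there, the flux limiter equals $c$ and the supersolution test becomes vacuous, whereas on $\Vbf\setminus\wtVbf$ the only relevant case $c_x<c$ automatically comes with the full solution property of $u$. Once this case split is in place, \cref{exunsolt} closes the argument.
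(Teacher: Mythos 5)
Your proposal is correct and follows essentially the same route as the paper: both verify that $u-ct$ is a solution of~\cref{eq:globteik} by reducing interior tests to the Eikonal equation together with $\partial_t\varphi=-c$, checking the flux condition of \cref{deftsubsol} via $c_x\le c$, and handling the vertex supersolution condition through the dichotomy $c_x=c$ (test vacuous) versus $c_x<c$ (so $x\notin\wtVbf$ and the state-constraint property of $u$ transfers). Your write-up is in fact slightly more explicit than the paper's at the vertices, where the paper compresses this case split into one sentence.
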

    \begin{proof}
        First we fix $\gamma\in\Ecal$ and let $\varphi$ be a $C^1$ supertangent to $u(\gamma(s))-ct$ at some $(s^*,t^*)\in\Qcal$. It is apparent that $s\mapsto\varphi(s,t^*)$ is a supertangent to $u\circ\gamma$ at $s^*$, therefore, since $u$ is a subsolution to~\hrefc{}, we have
        \begin{equation}\label{eq:weakltb1}
            H_\gamma(s^*,\partial_s\varphi(s^*,t^*))\le c.
        \end{equation}
        Next we notice that for each $h>0$ small enough
        \begin{equation*}
            \frac{\varphi(s^*,t^*-h)-\varphi(s^*,t^*)}{-h}\le\frac{u(\gamma(s^*))-c(t^*-h)-u(\gamma(s^*))+ct^*}{-h}=-c,
        \end{equation*}
        which shows, together with~\cref{eq:weakltb1}, that
        \begin{equation*}
            \partial_t\varphi(s^*,t^*)+H_\gamma(s^*,\partial_s\varphi(s^*,t^*))\le0.
        \end{equation*}
        This fact, taking into account that $\varphi$, $\gamma$ and $(s^*,t^*)$ are arbitrary, yields that $u-ct$ satisfies \cref{loctsubsol} in \cref{deftsubsol} of subsolution to the time-dependent problem. Similarly, we can show that $u-ct$ satisfies~\ref{loctsupsol} in \cref{deftsupsol} of supersolution. Moreover, it follows from~\cref{eq:fluxcrit} that~\ref{fluxcond} in \cref{deftsubsol} holds true for $u-ct$. Finally, by definition, whenever $x\in\Vbf\setminus\wtVbf$, i.e., $c_x<c$, \labelcref{stateconst} in \cref{defsol} of solution to the stationary equation holds true, therefore $u-ct$ also satisfies~\ref{tstateconst} in \cref{deftsupsol}. This yields that $u-ct$ is a solution to~\cref{eq:globteik}, which proves our claim.
    \end{proof}

    Fixed $\phi\in C(\Gamma)$, let $u$ be as in~\cref{eq:ltb.1} and set $\alpha\coloneqq0\vee\max\limits_{x\in\Gamma}(\phi(x)-u(x))$. Then \cref{Scalprop,weakltb} yield that
    \begin{equation}\label{eq:asbound}
        (\Scal(t)\phi)(x)\le u(x)-ct+\alpha,\qquad\text{for any }(x,t)\in\Gamma\times\Rds^+.
    \end{equation}
    The next \namecref{liveoptcurve}, whose proof is given in \cref{curvecostsec}, is a consequence of this inequality.

    \begin{lem}\label{liveoptcurve}
        Given $\phi\in C(\Gamma)$ and a flux limiter $c_x$ satisfying~\cref{eq:fluxcrit}, there is $T_\phi>0$ depending only on $\phi$ such that, for any $x\in\Gamma$, $t\ge T_\phi$ and any optimal curve $\xi$ for $(\Scal(t)\phi)(x)$, $\xi([0,t])\cap\wtAcal_\Gamma\ne\emptyset$.
    \end{lem}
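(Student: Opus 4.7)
The plan is to argue by contradiction: suppose, for arbitrarily large $t$, there is an optimal $\xi$ for $(\Scal(t)\phi)(x)$ with $\xi([0,t])\cap\wtAcal_\Gamma=\emptyset$, and derive a clash between an upper and a lower bound on its action. The upper bound is immediate from \cref{eq:asbound}: since $u$ is continuous on the compact network and $\xi$ is optimal,
\begin{equation*}
    \int_0^t (L(\xi,\dot\xi)+c)\,d\tau \;=\; (\Scal(t)\phi)(x)+ct-\phi(\xi(0)) \;\le\; \|u\|_\infty+\|\phi\|_\infty+\alpha \;=:\; M_\phi,
\end{equation*}
with $M_\phi$ depending only on $\phi$.

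The heart of the matter is to complement this with a growth estimate: establish constants $T_0>0$ and $\delta>0$, depending only on $\Gamma$ and $\Hcal$, such that every curve $\zeta:[0,T_0]\to\Gamma$ with $\zeta([0,T_0])\cap\wtAcal_\Gamma=\emptyset$ satisfies $\int_0^{T_0}(L(\zeta,\dot\zeta)+c)\,d\tau\ge\delta$. Applied to each of the $\lfloor t/T_0\rfloor$ consecutive windows of $\xi$, this would give $\int_0^t(L+c)\,d\tau\ge\delta\lfloor t/T_0\rfloor$, which exceeds $M_\phi$ as soon as $t\ge T_\phi:=T_0(M_\phi/\delta+1)$, producing the desired contradiction.

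For the growth estimate I would rely on the Fenchel--Legendre inequality $L(x,q)+c\ge\sigma_c(x,q)$, which follows from the definitions of $L_\gamma$ and $\sigma^\pm_{\gamma,c}$ together with the fact that $c\ge a_0\ge a_\gamma$, turning the problem into a statement about $\int\sigma_c$ along suitable pieces of $\zeta$. Two sources of uniform positivity become available once $\zeta$ avoids $\wtAcal_\Gamma$: at any vertex $v\notin\wtAcal_\Gamma$ one has $c_v<c$ by the definition of $\wtVbf$ in~\cref{eq:extvert} combined with~\cref{eq:fluxcrit}, so the ``resting cost'' $L(v,0)+c=c-c_v$ is uniformly positive; on any arc $\gamma$ whose support is not contained in $\Acal_\Gamma$, \cref{arcaubry} forces $a_\gamma<c$, so $L_\gamma(s,0)+c\ge c-a_\gamma$ is uniformly positive as well. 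When $\zeta$ moves with nonzero speed on non-Aubry arcs, the positivity must instead be extracted globally via~\cref{eq:critvalchar}: using the arc-by-arc decomposition of \cref{curveascomp}, finiteness of $\Ecal$ gives only finitely many combinatorial simple closed walks, all of which either lie in $\Acal_\Gamma$ or have $\int\sigma_c>0$, with a uniform positive lower bound on this value over the latter. Patching these finitely many cases together yields the uniform constants $T_0$ and $\delta$.

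The main obstacle is the technical bookkeeping inside the growth estimate. An optimal curve may be \emph{calibrated}, in the sense that $L+c=\sigma_c$ pointwise a.e., while traversing non-Aubry arcs, so pointwise positivity of the integrand is unavailable and one cannot simply bound the cost by a positive function on $\Gamma\setminus\wtAcal_\Gamma$. The argument must instead ensure that, over a sufficiently large but bounded window $T_0$, the curve either lingers for a fixed fraction of time near a vertex outside $\wtAcal_\Gamma$ (gaining positive mass from the vertex estimate) or completes a closed sub-curve whose support is not entirely in $\Acal_\Gamma$ (gaining positive mass from~\cref{eq:critvalchar}); this case analysis, controlled by the finite combinatorial structure of $\Gamma$, is precisely the content postponed to \cref{curvecostsec}.
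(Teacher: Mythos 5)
Your proposal is correct and follows essentially the same route as the paper's proof in \cref{curvecostsec}: the upper bound from \cref{eq:asbound}, the comparison $L+c\ge\sigma_c$ via \cref{laglbound}, the dichotomy between slow times (where the resting cost $c-c_v$, resp.\ $c-a_\gamma$, is uniformly positive off $\wtAcal_\Gamma$) and fast times (controlled by the linear-in-arclength lower bound on $\int\sigma_c$ for curves avoiding the Aubry set, i.e.\ \cref{sigest}). The only difference is organizational --- you iterate a per-window estimate where the paper applies the arclength bound globally on $[0,t]$ --- and your per-window claim does hold for $T_0$ large, but only because of the single additive constant $B$ in \cref{sigest}; you correctly identify that this combinatorial core (closed sub-loops versus a bounded simple remainder, plus the back-and-forth cancellation within an arc) is the real content deferred to the appendix.
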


    \Cref{liveoptcurve} shows that an optimal curve can stay outside the extended Aubry set only for a finite time. This is the key point for proving the finite time convergence results.

    \begin{proof}[Proof of \cref{subsolltb}]
        By \cref{liveoptcurve} there is a constant $T_w$ such that, fixed $x\in\Gamma$, $t\ge T_w$ and an optimal curve $\xi$ for $(\Scal(t)w)(x)$, there is a $t'\in[0,t]$ such that $\xi(t')\in\wtAcal_\Gamma$. Then it follows from \cref{laglbound,subsolchar} that
        \begin{align*}
            (\Scal(t)w)(x)+ct=&\,w(\xi(0))+\int_0^{t'}\left(L\left(\xi,\dot\xi\right)+c\right)d\tau+\int_{t'}^t\left(L\left(\xi,\dot\xi\right)+c\right)d\tau\\
            \ge&\,w(\xi(0))+S_c(\xi(0),\xi(t'))+S_c(\xi(t'),x)\ge w(\xi(t'))+S_c(\xi(t'),x)\\
            \ge&\,\min_{y\in\wtAcal_\Gamma}(w(y)+S_c(y,x))=u(x).
        \end{align*}
        Since the pair $(x,t)$ is arbitrary, this shows that
        \begin{equation}\label{eq:subsolltb1}
            (\Scal(t)w)(x)+ct\ge u(x),\qquad\text{for any }x\in\Gamma,t\ge T_w.
        \end{equation}
        Finally, since $w\le u$, \cref{eq:asbound} yields
        \begin{equation*}
            (\Scal(t)w)(x)+ct\le u(x),\qquad\text{for any }(x,t)\in\Gamma\times\Rds^+,
        \end{equation*}
        which, together with~\cref{eq:subsolltb1}, proves~\cref{eq:subsolltb.1}.
    \end{proof}

    We conclude this \lcnamecref{convfintime} proving a more general version of \cref{ltbfiniteweak} using Lagrangian parametrizations, see \cref{lagpardef}.

    \begin{theo}\label{ltbfinite}
        Assume that every $\gamma\in\Ecal$ admits a $c$--Lagrangian reparametrization, let $c_x$ be a flux limiter satisfying~\cref{eq:fluxcrit} and $u$ be defined by~\cref{eq:ltb.1}. If in each static class of $\Acal_\Gamma$ there is a vertex $x$ with $c_x=c$, then, for any $\phi\in C(\Gamma)$, there is a constant $T_\phi$ depending on $\phi$ and such that $\Scal(t)\phi+ct\equiv u$ on $\Gamma$ whenever $t\ge T_\phi$, where $u$ is defined by~\cref{eq:ltb.1}
    \end{theo}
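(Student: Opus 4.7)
The plan is to establish the two inequalities $(\Scal(t)\phi)(x)+ct\ge u(x)$ and $(\Scal(t)\phi)(x)+ct\le u(x)$ for all $x\in\Gamma$ once $t$ is large enough, and then take $T_\phi$ to be the maximum of the two thresholds.

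For the lower bound I would mirror the argument of \cref{subsolltb}. \Cref{liveoptcurve} supplies a time $T_\phi^1$ such that for every $t\ge T_\phi^1$ and every optimal curve $\xi$ for $(\Scal(t)\phi)(x)$, there is an intermediate time $t'\in[0,t]$ with $\xi(t')\in\wtAcal_\Gamma$. Splitting the Lagrangian action at $t'$ and invoking the Fenchel-type bound $\int(L+c)\,d\tau\ge S_c$ on each piece produces
\[
(\Scal(t)\phi)(x)+ct\ge\phi(\xi(0))+S_c(\xi(0),\xi(t'))+S_c(\xi(t'),x),
\]
and minimizing over the starting point in $\Gamma$ and the intermediate point in $\wtAcal_\Gamma$ gives $u(x)$.

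For the upper bound I would construct an explicit admissible curve of duration $t$ whose action equals $u(x)-ct$. Fix $x\in\Gamma$ and pick $y^*\in\wtAcal_\Gamma$ and $z^*\in\Gamma$ attaining the double minimum defining $u(x)$ in \cref{eq:ltb.1}. The standing assumption on $c$-Lagrangian reparametrizations lets me realize $S_c(z^*,y^*)$ as the action (shifted by $-cT_1$) of a curve $\xi_1$ from $z^*$ to $y^*$ of some duration $T_1$, and similarly $S_c(y^*,x)$ by a curve $\xi_2$ of duration $T_2$. If $y^*\in\wtVbf$, then $c_{y^*}=c$ and $L(y^*,0)=-c$; concatenating $\xi_1$ with the constant curve at $y^*$ of duration $t-T_1-T_2$ and with $\xi_2$ produces a curve whose Lagrangian action equals $S_c(z^*,y^*)+S_c(y^*,x)-ct$. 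If instead $y^*\in\Acal_\Gamma$, I exploit the hypothesis to pick a vertex $v$ in the static class of $y^*$ with $c_v=c$, and interpose a round trip $y^*\to v\to y^*$ around a waiting phase at $v$, using $c$-Lagrangian reparametrizations. Since $y^*$ and $v$ lie in the same static class, $S_c(y^*,v)+S_c(v,y^*)=0$, and the total action again reduces to $S_c(z^*,y^*)+S_c(y^*,x)-ct$. Feeding this curve into the Lax--Oleinik formula \cref{eq:vft} gives $(\Scal(t)\phi)(x)+ct\le\phi(z^*)+S_c(z^*,y^*)+S_c(y^*,x)=u(x)$.

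The two bounds are then combined by taking $T_\phi=T_\phi^1\vee T_\phi^2$, where $T_\phi^2$ is a uniform upper bound on the total duration of the constructed curves as $x$, $y^*$, $z^*$ and (when needed) $v$ vary; such a bound is available from compactness of $\Gamma$ and finiteness of $\Ecal$. The main obstacle is the upper-bound construction: I need to know that $c$-Lagrangian reparametrizations of the relevant paths can be chosen with durations bounded independently of the selected $y^*, z^*, v$, and that the pieces glue into an admissible curve of prescribed total length $t$. These two points should follow from the reparametrization toolkit of \cref{repcurvesec} together with the static class identity $S_c(y^*,v)+S_c(v,y^*)=0$, which is what makes the waiting phase at $v$ free of charge after the $+ct$ shift.
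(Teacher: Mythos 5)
Your proposal is correct and follows essentially the same route as the paper: the lower bound rests on \cref{liveoptcurve} and splitting the action at the first contact with $\wtAcal_\Gamma$ (the paper packages this as an application of \cref{subsolltb} to the subsolution $w$ of \cref{eq:protoltb.2}, whereas you apply \cref{liveoptcurve} to $\phi$ directly, which works just as well), and the upper bound is the same concatenated test curve with a cost-free waiting phase at a vertex $v$ with $c_v=c$ reached via the static-class identity. The one point you leave informal --- a uniform bound on the durations of the Lagrangian-reparametrized pieces --- is exactly what the paper extracts from \cref{timeoflagpar}, taking the optimal curves simple so that $T_c=\sum_\gamma\ovT_\gamma(c)$ dominates them.
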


    Notice that, by \cref{arcaubry,timeoflagpar}, this \namecref{ltbfinite} depends on the dynamical properties of the Aubry set as well as on the flux limiter. In particular, if $c>a_0$, then each $\gamma\in\Ecal$ has a $c$--Lagrangian reparametrization, i.e., \cref{ltbfinite} implies \cref{ltbfiniteweak}.

    \begin{proof}
        We preliminarily observe that if $w$ is defined by~\cref{eq:protoltb.2}, then \cref{maxsubsol,Scalprop} yield
        \begin{equation*}
            (\Scal(t)w)(x)+ct\le(\Scal(t)\phi)(x)+ct,\qquad\text{for any }(x,t)\in\Gamma\times\Rds^+.
        \end{equation*}
        It follows from \cref{subsolltb} that, for any $x\in\Gamma$
        \begin{equation}\label{eq:ltbfinite1}
            (\Scal(t)\phi)(x)+ct\ge u(x),\qquad\text{for any }x\in\Gamma,\,t\ge T,
        \end{equation}
        where $T$ is a constant depending on $\phi$. Next we fix $x\in\Gamma$ and let $y\in\wtAcal_\Gamma$ and $z\in\Gamma$ be such that
        \begin{equation}\label{eq:ltbfinite2}
            u(x)=w(y)+S_c(y,x)=\phi(z)+S_c(z,y)+S_c(y,x)
        \end{equation}
        and let $\xi_1:[0,T_1]\to\Gamma$ and $\xi_2:[0,T_2]\to\Gamma$ be two simple curves optimal for $S_c(z,y)$ and $S_c(y,x)$, respectively. Exploiting \cref{subsolstatclass} and our assumptions, we assume without loss of generality that $y\in\Vbf$ and $c_y=c$. Moreover, we observe that since every $\gamma\in\Ecal$ admits a $c$--Lagrangian reparametrization, then, by \cref{curveascomp,repconstspeed,repsigma}, we can further assume that $\xi_1$ and $\xi_2$ have a $c$--Lagrangian parametrization. If we define, for any $t\ge T_1+T_2$,
        \begin{equation*}
            \xi_t(r)\coloneqq\left\{
            \begin{aligned}
                &\xi_1(r),&&\text{if }r\in[0,T_1],\\
                &y,&&\text{if }r\in(T_1,t-T_2),\\
                &\xi_2(r-(t-T_2)),&&\text{if }r\in[t-T_2,t],
            \end{aligned}
            \right.
        \end{equation*}
        it is then apparent that
        \begin{equation*}
            \int_0^t\left(L\left(\xi_t,\dot\xi_t\right)+c\right)d\tau=S_c(z,y)+S_c(y,x),\qquad\text{for any }t\ge T_1+T_2,
        \end{equation*}
        thus in view of~\cref{eq:ltbfinite2} we get
        \begin{equation}\label{eq:ltbfinite3}
            (\Scal(t)\phi)(x)+ct\le u(x),\qquad\text{for any }t\ge T_1+T_2.
        \end{equation}
        Finally \cref{timeoflagpar} yields that
        \begin{equation*}
            T_\gamma(c)\coloneqq\left\{t>0:\text{$\gamma$ has a $c$--Lagrangian reparametrization defined in $[0,t]$}\right\}
        \end{equation*}
        is a compact interval for all $\gamma\in\Ecal$, i.e.,
        \begin{equation*}
            T_\gamma(c)=\left[\underline T_\gamma(c),\ovT_\gamma(c)\right];
        \end{equation*}
        hence it is simple to check that the constant $T_c\coloneqq\sum_\gamma\ovT_\gamma(c)$, which depends only on $c$, is bigger than both $T_1$ and $T_2$. Then, by~\cref{eq:ltbfinite3} and the fact that $x$ is arbitrary,
        \begin{equation}\label{eq:ltbfinite4}
            (\Scal(t)\phi)(x)+ct\le u(x),\qquad\text{for any }x\in\Gamma,\,t\ge 2T_c,
        \end{equation}
        therefore, setting $T_\phi\coloneqq2T_c\vee T$, \cref{eq:ltbfinite1,eq:ltbfinite4} conclude the proof.
    \end{proof}

    \subsection{Convergence in the General Case}

    Here we will prove \cref{ltb}. In order to do so we introduce a uniform limits set and analyze the dynamical properties of the extended Aubry set. The analysis goes along the same lines as the one performed in~\cite{DaviniSiconolfi06}.

    First we observe that the vertices in $\wtVbf$, where $\wtVbf$ is defined as in~\cref{eq:extvert}, and the static classes of $\Acal_\Gamma$ form a partition of $\wtAcal_\Gamma$, whose elements we will henceforth refer to as \emph{static classes} of the extended Aubry set. Noticing that the static classes outside $\Acal_\Gamma$ are singletons, we get the following extension of \cref{subsolstatclass}.

    \begin{lem}\label{subsolextstatclass}
        If $\Gamma'$ is a static class of $\wtAcal_\Gamma$ and $w$ is a subsolution to~\hrefc{} then
        \begin{equation*}
            w(x)=w(y)+S_c(y,x),\qquad\text{for any }x,y\in\Gamma'.
        \end{equation*}
    \end{lem}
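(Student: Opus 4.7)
The plan is to exploit the partition, recorded in the paragraph immediately preceding the statement, of $\wtAcal_\Gamma$ into the static classes of $\Acal_\Gamma$ together with the singletons $\{x\}$ for $x\in\wtVbf$. For a generic static class $\Gamma'$ of $\wtAcal_\Gamma$ I would accordingly split into two cases.

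If $\Gamma'$ is a static class of $\Acal_\Gamma$, nothing new is needed: the conclusion is exactly \cref{subsolstatclass} applied to $w$. If instead $\Gamma'=\{x\}$ is a singleton with $x\in\wtVbf$, the only pair to check is $(x,x)$, so it is enough to prove $S_c(x,x)=0$; indeed, once this is established, the identity $w(x)=w(x)+S_c(x,x)$ is automatic.

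For the upper bound $S_c(x,x)\le 0$ I would test the defining $\min$ against the constant curve $\xi\equiv x$ on $[0,1]$: since $\dot\xi\equiv 0$ and $x\in\Vbf$, the third clause in the definition of $\sigma_c$ on $T\Gamma$ gives $\sigma_c(x,0)=0$, so the integral vanishes. For the matching lower bound $S_c(x,x)\ge 0$ I would invoke the characterization \cref{eq:critvalchar} of the critical value: every admissible $\xi$ in the definition of $S_c(x,x)$ is a closed curve, and the excerpt states that $\int_0^T\sigma_c(\xi,\dot\xi)\,d\tau\ge 0$ for all closed curves, both when $c>a_0$ and, under the supplementary assumption on arcs with $a_\gamma=c=a_0$, when $c=a_0$. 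Combining the two bounds yields $S_c(x,x)=0$.

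I do not anticipate any real obstacle: the only delicate point is recalling the correct definition of $\sigma_c$ at a vertex with vanishing momentum, and checking that \cref{eq:critvalchar} is available also in the degenerate regime $c=a_0$, both of which are explicitly addressed in the excerpt. The lemma is thus essentially a packaging of \cref{subsolstatclass} with the fact that $S_c$ vanishes identically at each isolated vertex of the extended Aubry set.
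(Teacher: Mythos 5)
Your proposal is correct and is exactly the argument the paper intends: it states the lemma as an immediate consequence of the partition of $\wtAcal_\Gamma$ into static classes of $\Acal_\Gamma$ (handled by \cref{subsolstatclass}) and singletons from $\wtVbf$, for which the claim reduces to $S_c(x,x)=0$. Your verification of that identity via the constant curve (using $\sigma_c(x,0)=0$ at vertices) and the lower bound \cref{eq:critvalchar} for closed curves is precisely the routine check the paper leaves implicit.
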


    The asymptotic character of our analysis require the use of a special class of curves.

    \begin{defin}
        We call \emph{static curve} any curve $\zeta:\Rds\to\Gamma$ with support contained in the extended Aubry set and such that
        \begin{equation*}
            \int_{t_1}^{t_2}\left(L\left(\zeta,\dot\zeta\right)+c\right)d\tau=S_c(\zeta(t_1),\zeta(t_2)),\qquad\text{for any }t_2\ge t_1.
        \end{equation*}
        As a consequence of \cref{laglbound} we have that $\zeta$ has $c$--Lagrangian parametrization.
    \end{defin}

    On smooth manifolds it is known, see, e.g., \cite{DaviniSiconolfi06}, that through any point of the Aubry set passes a static curve. On networks we further have that on each static class there is a static curve.

    \begin{prop}\label{critcurveperiod}
        Each static class of the extended Aubry set contains a periodic static curve.
    \end{prop}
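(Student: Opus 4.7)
The plan is to treat the two types of static classes of $\wtAcal_\Gamma$ separately, exploiting the decomposition $\wtAcal_\Gamma=\Acal_\Gamma\cup\wtVbf$ described just before the statement. For a singleton class $\{x\}$ with $x\in\wtVbf$, I would simply take the constant curve $\zeta\equiv x$; since $L(x,0)=-c_x=-c$ one gets $\int_{t_1}^{t_2}(L(\zeta,\dot\zeta)+c)\,d\tau=0$, while the constant trajectory shows $S_c(x,x)\le0$ and the critical value characterization~\cref{eq:critvalchar} forces $S_c(x,x)\ge0$. Hence this $\zeta$ is trivially a periodic static curve.

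For a static class $\Gamma'\subseteq\Acal_\Gamma$, I would begin from the very definition of the Aubry set, picking any $x\in\Gamma'$ and a closed curve $\xi:[0,T]\to\Gamma$ through $x$ with a.e.\ non-vanishing derivative and $\int_0^T\sigma_c(\xi,\dot\xi)\,d\tau=0$. By the equivalence relation defining static classes, the support of $\xi$ is contained in $\Gamma'$. The next step is to reparametrize $\xi$ so that $L(\xi,\dot\xi)+c=\sigma_c(\xi,\dot\xi)$ almost everywhere, producing a $c$--Lagrangian parametrization: I would decompose $\xi$ arc-by-arc through \cref{curveascomp} and invoke the reparametrization machinery of \cref{repcurvesec} (notably \cref{timeoflagpar} together with \cref{repconstspeed,repsigma}) on each piece, then extend the result periodically to $\Rds$ to obtain the candidate $\zeta$.

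It then remains to verify the identity $\int_{t_1}^{t_2}(L(\zeta,\dot\zeta)+c)\,d\tau=S_c(\zeta(t_1),\zeta(t_2))$ for every $t_1\le t_2$. The inequality $\ge$ is immediate from the definition of $S_c$ once one uses $L+c=\sigma_c$ a.e.\ along $\zeta$. For the opposite inequality I would exploit periodicity: if $T'$ denotes the period, then $\int_{t_1}^{t_1+T'}(L+c)\,d\tau=0$, hence the integral on $[t_1,t_2]$ equals $-\int_{t_2}^{t_1+T'}(L+c)\,d\tau\le-S_c(\zeta(t_2),\zeta(t_1))$, and the semidistance triangle inequality combined with $S_c(\zeta(t_1),\zeta(t_1))=0$ gives $-S_c(\zeta(t_2),\zeta(t_1))\le S_c(\zeta(t_1),\zeta(t_2))$, closing the loop; for $t_2-t_1$ larger than $T'$ the same argument runs after subtracting integer multiples of the period.

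The main obstacle is the reparametrization step: one must check that every arc visited by $\xi$ admits a $c$--Lagrangian reparametrization. This is subtle precisely when $c=a_0$ and some traversed arc $\gamma$ satisfies $a_\gamma=c$, but in that regime \cref{arcaubry} places such arcs inside $\Acal_\Gamma$ and the coincidence~\cref{eq:ceqa0sig} of the support functions makes the construction go through; one must additionally verify that the piecewise reparametrizations glue continuously at the vertices so that $\zeta$ is globally absolutely continuous and periodic, which follows from the matching of arrival/departure times supplied by \cref{timeoflagpar}.
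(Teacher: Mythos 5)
Your overall architecture coincides with the paper's: constant curves for the singleton classes in $\wtVbf$, and, for a static class $\Gamma'\subseteq\Acal_\Gamma$, a $c$--Lagrangian reparametrization of the zero-cost closed curve furnished by the definition of the Aubry set, extended periodically; your verification of the static-curve identity via periodicity and the triangle inequality for $S_c$ is also sound. The gap is in your last paragraph. When $c$ fails to be admissible for $\xi$ in the sense of \cref{admisdef} --- which happens exactly when the support of $\xi$ meets a point $x$ with $L(x,0)=-c$, i.e.\ an arc with $a_\gamma=c$ or a vertex of $\Acal_\Gamma$ with $c_x=c$, so not only when $c=a_0$ --- \cref{timeoflagpar} no longer guarantees a $c$--Lagrangian reparametrization on any finite interval, and in general none exists. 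Indeed, on an arc $\gamma$ with $a_\gamma=c$ the requirement $L_\gamma(s,\lambda)+c=\sigma^+_{\gamma,c}(s)\lambda$ forces $\lambda\in\partial_\mu H_\gamma\left(s,\sigma^+_{\gamma,c}(s)\right)$, and since by~\cref{eq:ceqa0sig} the point $\sigma^+_{\gamma,c}(s)=\sigma^-_{\gamma,c}(s)$ is the minimizer of $H_\gamma(s,\cdot)$, this set reduces to $\{0\}$ whenever $H_\gamma$ is differentiable in $\mu$: the only $c$--Lagrangian motions on such an arc are rest points, so no reparametrization of $\xi$ can traverse it. The coincidence~\cref{eq:ceqa0sig} is therefore not the cure you invoke but precisely the source of the obstruction.

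The paper resolves this case by abandoning $\xi$ rather than reparametrizing it: if $c$ is not admissible for $\xi$, then $a_\xi\ge c$ together with \cref{laglbound} (which gives $L(x,0)+c\ge\sigma_c(x,0)=0$ everywhere) produces a point $x$ on the support of $\xi$, hence in $\Gamma'$, with $L(x,0)=-c$, and the constant curve $\zeta\equiv x$ is already a periodic static curve --- exactly the same device you used for the $\wtVbf$ singletons. Replacing your claim that ``the construction goes through'' with this constant-curve fallback closes the gap and recovers the paper's proof.
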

    \begin{proof}
        Given a static class $\Gamma'\subseteq\Acal_\Gamma$ there is a closed curve $\xi:[0,T]\to\Gamma$ with a.e.\ non-vanishing derivative and support contained in it such that
        \begin{equation*}
            \int_0^T\sigma_c\left(\xi,\dot\xi\right)d\tau=S_c(\xi(0),\xi(0))=0.
        \end{equation*}
        If $c$ is admissible for $\xi$ (see \cref{admisdef}) we can assume, thanks to \cref{repsigma,timeoflagpar}, that $\xi$ has $c$--Lagrangian parametrization. We then have that
        \begin{equation*}
            \zeta(t)\coloneqq\xi\left(t-\left\lfloor\frac tT\right\rfloor T\right),\qquad\text{for }t\in\Rds,
        \end{equation*}
        is a periodic static curve contained in $\Gamma'$.\\
        If $c$ is not admissible or $\Gamma'\subseteq\wtVbf$ there is an $x\in\Gamma'$ such that $L(x,0)=-c$, thus the curve $\zeta:\Rds\to\{x\}$ is a periodic static curve contained in $\Gamma'$.
    \end{proof}

    Now assume that~\cref{eq:fluxcrit} holds, then, given a $\phi\in C(\Gamma)$, $u$ defined by~\cref{eq:ltb.1} and $w$ as in~\cref{eq:protoltb.2}, we have by~\cref{eq:asbound,Scalprop} that
    \begin{equation}\label{eq:Scal2bound}
        (\Scal(t)w)(x)+ct\le(\Scal(t)\phi)(x)+ct\le u(x)+\alpha,\qquad\text{for any }(x,t)\in\Gamma\times\Rds^+.
    \end{equation}
    Thanks to this and \cref{unifconttsol}, the Arzelà--Ascoli Theorem yields that for any positive diverging sequence $\{t_n\}_{n\in\Nds}$, up to subsequences, $\Scal(t_n)\phi+ct_n$ converges uniformly to some continuous function $f$. We denote with $\omega_\Scal(\phi)$ the set made up by the uniform limits of $\Scal(t)\phi+ct$. We point out that by~\cref{eq:Scal2bound,subsolltb}
    \begin{equation}\label{eq:omegalbound}
        f(x)\ge u(x),\qquad\text{for any }f\in\omega_\Scal(\phi),x\in\Gamma.
    \end{equation}
    We further set the semilimit
    \begin{equation}\label{eq:limsuptsol}
        \unphi(x)\coloneqq\sup\left\{\limsup_{n\to\infty}(\Scal(t_n)\phi)(x_n)+ct_n\right\},
    \end{equation}
    where the supremum is taken over the sequences $\{x_n\}_{n\in\Nds}$ converging to $x$ and the positive diverging sequences $\{t_n\}_{n\in\Nds}$. In view of the uniform continuity of $(\Scal(t)\phi)(x)$ proved in \cref{unifconttsol}, $\unphi$ is continuous and the sequences $\{x_n\}$ may be chosen identically equal to $x$. It follows that
    \begin{equation}\label{eq:unphisup}
        \unphi(x)=\sup\{f(x):f\in\omega_\Scal(\phi)\}.
    \end{equation}

    \begin{prop}\label{unphisubsol}
        Given $\phi\in C(\Gamma)$ and a flux limiter satisfying~\cref{eq:fluxcrit}, let $\unphi$ be as in~\cref{eq:limsuptsol}. Then $\unphi$ is a subsolution to~\hrefc{}.
    \end{prop}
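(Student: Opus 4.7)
The plan is to reduce the arcwise stationary subsolution property of $\unphi$ to a time-dependent subsolution property via viscosity stability applied to time-translates of $w_\tau(x)\coloneqq(\Scal(\tau)\phi)(x)+c\tau$. Since continuity of $\unphi$ is already recorded, I only need to show that for each $\gamma\in\Ecal$ and each $s^*\in(0,1)$, every $C^1$ supertangent $\eta$ to $\unphi\circ\gamma$ at $s^*$ satisfies $H_\gamma(s^*,\eta'(s^*))\le c$.

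Fix a positive diverging sequence $\{\tau_n\}$ and set $W_n(s,t)\coloneqq w_{\tau_n+t}(\gamma(s))$ for $(s,t)\in\Qcal$. Since $\Scal(\tau_n+\cdot)\phi$ solves~\cref{eq:globteik}, each $W_n$ is a viscosity subsolution of the shifted equation $\partial_t W+H_\gamma(s,\partial_s W)=c$ in $\Qcal$, and the bound~\cref{eq:Scal2bound} ensures that $\{W_n\}$ is uniformly bounded above. The crucial step is to identify the upper half-relaxed limit of $\{W_n\}$ with the $t$-independent function $(s,t)\mapsto\unphi(\gamma(s))$: by the uniform continuity of $(x,\tau)\mapsto w_\tau(x)$ given in \cref{unifconttsol} together with the definition~\cref{eq:limsuptsol} of $\unphi$, for any $(s_k',t_k')\to(s,t)$ along a subsequence the shifted times $\tau_{n_k}+t_k'$ still diverge, so the $t$-dependence washes out and the limsup reduces to $\unphi(\gamma(s))$.

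By the classical Barles--Perthame stability of viscosity subsolutions under upper half-relaxed limits, applied inside $\Qcal$ where the PDE is one-dimensional in $s$ and standard, the function $(s,t)\mapsto\unphi(\gamma(s))$ is a viscosity subsolution of $\partial_t W+H_\gamma(s,\partial_s W)=c$ on $\Qcal$. Testing it against the $t$-independent supertangent $(s,t)\mapsto\eta(s)$ at any $(s^*,t^*)\in\Qcal$ makes the time derivative vanish and yields $H_\gamma(s^*,\eta'(s^*))\le c$, which is exactly the required arcwise subsolution condition.

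The main obstacle is the rigorous identification of the upper half-relaxed limit as a $t$-independent function; once this is in place, stability and a straightforward test-function inspection finish the argument. An alternative route, avoiding the stability machinery, would use the Lax--Oleinik identity $w_{\tau+r}(x)=\min_y[w_\tau(y)+h_r(y,x)]+cr$ to deduce the dynamic-programming inequality $\unphi(x)\le\unphi(y)+h_r(y,x)+cr$ for all $y\in\Gamma$ and $r>0$, from which the arcwise subsolution property of $\unphi$ can then be extracted via a standard Legendre-duality argument between $L_\gamma$ and $H_\gamma$.
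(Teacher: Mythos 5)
Your overall strategy (time-translate the solution, pass to a limit, use stability of viscosity subsolutions, and test with a $t$-independent test function) is the same as the paper's, but the step you yourself flag as crucial contains a genuine gap. For a \emph{fixed} diverging sequence $\{\tau_n\}$, the upper half-relaxed limit of $W_n(s,t)=(\Scal(\tau_n+t)\phi)(\gamma(s))+c(\tau_n+t)$ is in general \emph{not} equal to $\unphi(\gamma(s))$: since $\unphi$ is defined in~\cref{eq:limsuptsol} as a supremum over \emph{all} diverging time sequences, while the relaxed limit only sees times of the form $\tau_n+t'$ with $t'$ near $t$, the divergence of the shifted times only yields the one-sided bound $\limsup_{n,(s',t')\to(s,t)}W_n(s',t')\le\unphi(\gamma(s))$. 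If $\Scal(\tau)\phi+c\tau$ oscillates in $\tau$ (which cannot be excluded at this stage of the argument --- the whole point of the proposition is to help rule this out), a badly chosen $\{\tau_n\}$ gives strict inequality on a large set, the half-relaxed limit is then some subsolution strictly below $\unphi\circ\gamma$, and no conclusion about $\unphi$ follows.

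The repair is to localize: fix the arc $\gamma$, the point $\ovs\in(0,1)$ and the supertangent $\varphi$ \emph{first}, then choose $\{\tau_n\}$ realizing the supremum in~\cref{eq:limsuptsol} at the single point $\gamma(\ovs)$ (with $x_n\equiv\gamma(\ovs)$, which \cref{unifconttsol} permits), and recenter time on $[-\delta,\delta]$ with $\tau_n>\delta$ so that the touching point is interior. The limit function $v$ (obtained by Arzel\`a--Ascoli from~\cref{eq:Scal2bound} and \cref{unifconttsol}, or by half-relaxed limits) then satisfies $v\le\unphi\circ\gamma$ everywhere and $v(\ovs,0)=\unphi(\gamma(\ovs))$; this is all that is needed, because it makes $(s,t)\mapsto\varphi(s)$ a supertangent to $v$ at $(\ovs,0)$, and the stability-inherited subsolution property of $v$ gives $H_\gamma(\ovs,\partial_s\varphi(\ovs))\le 0+H_\gamma(\ovs,\partial_s\varphi(\ovs))\le c$. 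This is exactly the paper's proof. Your sketched alternative via the dynamic programming inequality $\unphi(x)\le\unphi(y)+h_r(y,x)+cr$ is sound and, combined with \cref{approxlagpar} to pass to $\unphi(x)-\unphi(y)\le S_c(y,x)$ and then \cref{subsolchar}, would give a test-function-free proof; but as written it is only an aside, and the main argument needs the fix above.
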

    \begin{proof}
        We have seen above that $\unphi$ is continuous, thereby to prove our claim it is enough to show that $\unphi\circ\gamma$ is a subsolution to~\ghrefc{} for any $\gamma\in\Ecal$, see \cref{defsol}.\\
        We start fixing a $\gamma\in\Ecal$, a supertangent $\varphi$ to $\unphi\circ\gamma$ at a point $\ovs\in(0,1)$, a $\delta>0$ and a sequence $\{t_n\}_{n\in\Nds}$ such that $t_n>\delta$ for all $n\in\Nds$ and
        \begin{equation*}
            \lim_{n\to\infty}(\Scal(t_n)\phi)(\gamma(\ovs))+ct_n=\unphi(\gamma(\ovs)).
        \end{equation*}
        We further set for each $n\in\Nds$
        \begin{align*}
            v_n:[0,1]\times[-\delta,\delta]&\longrightarrow\Rds,\\
            (s,t)&\longmapsto(\Scal(t_n+t)\phi)(\gamma(s))+c(t_n+t),
        \end{align*}
        then~\cref{eq:Scal2bound}, \cref{unifconttsol} and the Arzelà--Ascoli Theorem yield that, up to subsequences, $\{v_n\}$ uniformly converges to a $v\in C([0,1]\times[-\delta,\delta])$. It is clear that each $v_n$ is a viscosity solution to
        \begin{equation}\label{eq:unphisubsol1}
            \partial_t U(s,t)+H_\gamma(s,\partial_s U(s,t))=c,\qquad\text{on }(0,1)\times(-\delta,\delta),
        \end{equation}
        and standard stability properties of the viscosity solutions (see, e.g., \cite[Proposition II.2.2]{BardiCapuzzo-Dolcetta97}) show that also $v$ is a viscosity solution to~\cref{eq:unphisubsol1}. By definition we have that
        \begin{equation*}
            v(\ovs,0)=\unphi(\gamma(\ovs))\qquad\text{and}\qquad v(s,t)\le\unphi(\gamma(s)),\quad\text{for any }(s,t)\in[0,1]\times[-\delta,\delta],
        \end{equation*}
        therefore $\varphi$ is a supertangent to $v$ at $(\ovs,0)$. Since $v$ is a viscosity solution to~\cref{eq:unphisubsol1} it follows that
        \begin{equation*}
            H_\gamma(\ovs,\partial_s\varphi(\ovs))\le c,
        \end{equation*}
        which, since $\varphi$ and $\ovs$ are arbitrary, proves that $\unphi\circ\gamma$ is a subsolution to~\ghrefc{}.
    \end{proof}

    The next results concern the behavior of subsolution to~\hrefc{} and elements of $\omega_\Scal$ on static curves.

    \begin{lem}\label{omegaSconstaux}
        Given $\phi\in C(\Gamma)$ and a flux limiter satisfying~\cref{eq:fluxcrit}, let $f\in\omega_\Scal(\phi)$ and $w$ be a subsolution to~\hrefc{}. For any periodic static curve $\zeta$ the function
        \begin{equation*}
            t\longmapsto(\Scal(t)f)(\zeta(t))+ct-w(\zeta(t))
        \end{equation*}
        is constant.
    \end{lem}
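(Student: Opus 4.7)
The plan is to prove that $g(t) \coloneqq (\Scal(t)f)(\zeta(t)) + ct - w(\zeta(t))$ is non-increasing and that $g(0)$ coincides with $\lim_{t\to\infty} g(t)$, which together force $g$ to be constant. For monotonicity, given $0 \leq t_1 \leq t_2$, I would use the Lax--Oleinik formula~\cref{eq:vft} applied to $\Scal(t_2 - t_1)\Scal(t_1)f$ with the restriction $\zeta|_{[t_1, t_2]}$ as a test curve. The static property of $\zeta$ identifies $\int_{t_1}^{t_2}(L + c)\,d\tau$ with $S_c(\zeta(t_1), \zeta(t_2))$, and \cref{subsolextstatclass} applied to $w$ (whose support along $\zeta$ lies in a single static class of $\wtAcal_\Gamma$) rewrites that semidistance as $w(\zeta(t_2)) - w(\zeta(t_1))$, which cancels with the $w$ terms in $g$ and delivers $g(t_2) \leq g(t_1)$.

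To identify the limit of $g$, I would apply the same monotonicity reasoning to $g_\phi(s) \coloneqq (\Scal(s)\phi)(\zeta(s)) + cs - w(\zeta(s))$; combined with the lower bound coming from~\cref{eq:Scal2bound} and \cref{subsolltb}, this yields $g_\phi(s) \to L_\phi \in \Rds$ as $s \to \infty$. Writing $f$ as a uniform limit of $\Scal(s_n)\phi + cs_n$ with $s_n \to \infty$ and extracting a further subsequence so that $s_n \bmod T \to r \in [0, T]$, where $T$ is the period of $\zeta$, uniform convergence of the semigroup gives $g_\phi(s_n + t) \to (\Scal(t)f)(\zeta(r+t)) + ct - w(\zeta(r+t))$ for each fixed $t \geq 0$; this limit also equals $L_\phi$ since $s_n + t \to \infty$. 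Setting $\eta(s) \coloneqq \zeta(r+s)$, another periodic static curve of period $T$, this reads $g^f_\eta \equiv L_\phi$.

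To transfer the constancy back from $\eta$ to $\zeta$, I would exploit the fact that every element of $\omega_\Scal(\phi)$, in particular both $f$ and $\Scal(t)f + ct$, is a subsolution to~\hrefc{}. Applied to $\Scal(t)f + ct$, \cref{subsolchar} together with \cref{subsolextstatclass} yield $g^f_\eta(t) \leq g^f_\zeta(t)$, whence $g^f_\zeta \geq L_\phi$ on $\Rds^+$. Applied to $f$, \cref{subsolextstatclass} shows that $f - w$ is constant on the static class supporting $\zeta$, so $g^f_\zeta(0) = (f-w)(\zeta(0)) = (f-w)(\zeta(r)) = g^f_\eta(0) = L_\phi$. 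Combined with the monotonicity already established, this forces $g^f_\zeta \equiv L_\phi$ on $\Rds^+$. The delicate point will be verifying the subsolution property of a generic $f \in \omega_\Scal(\phi)$: the proof of \cref{unphisubsol} uses the pointwise domination $v(s,t) \leq \unphi(\gamma(s))$ coming from the sup definition of $\unphi$, whereas for an individual $f$ one has to work with the shifted sequence $\Scal(s_n + t)\phi + c(s_n + t)$, pass to a viscosity limit $v$ on $(0,1) \times (-\delta, \delta)$, and then insert the candidate supertangent $\varphi(s)$ (constant in $t$) at a point $(\bar s, 0)$ where $v(\bar s, 0) = f(\gamma(\bar s))$, which requires mild extra care near the time boundary but is otherwise standard.
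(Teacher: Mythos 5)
Your first three steps are sound and run parallel to the paper's own argument: the paper likewise establishes monotonicity of $t\mapsto(\Scal(t)g)(\zeta(t))+ct-w(\zeta(t))$ along the static curve via the Lax--Oleinik formula and \cref{subsolextstatclass}, deduces convergence of this quantity from the two-sided bounds, and then identifies $(\Scal(t)f)(\zeta(t))+ct-w(\zeta(t))$ with that limit by writing $f$ as a limit of $\Scal(t_n)g+ct_n$ along diverging times. The genuine gap is in your final transfer from $\eta$ back to $\zeta$. Both of its halves --- the inequality $g^f_\eta(t)\le g^f_\zeta(t)$ obtained by applying \cref{subsolextstatclass} to $\Scal(t)f+ct$, and the identity $(f-w)(\zeta(0))=(f-w)(\zeta(r))$ --- require that an individual element of $\omega_\Scal(\phi)$ be a subsolution to~\hrefc{}. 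The paper proves this only for the supremum $\unphi$ (\cref{unphisubsol}), and your sketched adaptation does not go through: that proof promotes a supertangent $\varphi$ of $\unphi\circ\gamma$ at $\ovs$ to a space-time supertangent of the limit $v$ at $(\ovs,0)$ precisely because $v(s,t)\le\unphi(\gamma(s))$ everywhere with equality at $(\ovs,0)$, a domination built into the definition of $\unphi$ as a supremum over all limits. For an individual $f$ one only knows $v(\cdot,0)=f\circ\gamma$, and nothing prevents $v(s,t)>f(\gamma(s))$ for $t\ne0$ arbitrarily close to $0$; a supertangent of the $t=0$ slice then simply fails to be a supertangent of $v$ at $(\ovs,0)$, and no $C^1$ correction in $t$ repairs this (one would need to dominate $|t|$ near $0$). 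A priori, ``every $f\in\omega_\Scal(\phi)$ is a critical subsolution'' is only known a posteriori, once \cref{ltb} shows $\omega_\Scal(\phi)$ is a singleton, so invoking it here is circular.

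The good news is that the gap is avoidable without any new idea, because the phase shift $r$ need never appear. Either choose, as the paper does, the diverging times so that $\zeta(t_n)\to\zeta(0)$ (e.g.\ $t_n$ asymptotically a multiple of the period $T$) and represent $f$ as $\lim_n\Scal(t_n)g+ct_n$ for a suitable $g\in\omega_\Scal(\phi)$, so that the limit identity lands directly on $(\Scal(t)f)(\zeta(t))+ct-w(\zeta(t))$; or, keeping your setup, run your first three steps with the periodic static curve $\zeta(\cdot-r)$ in place of $\zeta$ (a time translate of a periodic static curve is again one), which turns $g^f_\eta$ into exactly $g^f_\zeta$ and makes the fourth step superfluous.
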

    \begin{proof}
        Let $\{t_n\}_{n\in\Nds}$ and $\{t_n'\}_{n\in\Nds}$ be two positive diverging sequences such that $\lim_n\zeta(t_n)=\zeta(0)$ and $\lim_n\|\Scal(t_n')\phi+ct_n'-f\|_\infty=0$. We also assume, without loss of generality, that $\lim_n t_n'-t_n=\infty$ and $\Scal(t_n'-t_n)\phi+c(t_n'-t_n)$ uniformly converges to $g\in\omega_\Scal(\phi)$. It follows from~\cref{eq:vft} that
        \begin{align*}
            \left\|\Scal\left(t_n'\right)\phi+ct_n'-\Scal(t_n)g-ct_n\right\|_\infty=&\,\left\|\Scal\left(t_n+t_n'-t_n\right)\phi-\Scal(t_n)g+c\left(t_n'-t_n\right)\right\|_\infty\\
            \le&\,\left\|\Scal\left(t_n'-t_n\right)\phi+c\left(t_n'-t_n\right)-g\right\|_\infty,
        \end{align*}
        which shows that
        \begin{equation}\label{eq:omegaSconstaux1}
            \lim_{n\to\infty}\|\Scal(t_n)g+ct_n-f\|_\infty=0.
        \end{equation}
        Next we have by \cref{subsolextstatclass} that, for any $t_2\ge t_1\ge0$,
        \begin{align*}
            (\Scal(t_2)g)(\zeta(t_2))+ct_2-(\Scal(t_1)g)(\zeta(t_1))-ct_1\le&\,\int_{t_1}^{t_2}\left(L\left(\zeta,\dot\zeta\right)+c\right)d\tau=S_c(\zeta(t_1),\zeta(t_2))\\
            =&\,w(\zeta(t_2))-w(\zeta(t_1))
        \end{align*}
        and consequently that $t\mapsto(\Scal(t)g)(\zeta(t))+ct-w(\zeta(t))$ is nonincreasing. This monotonicity and~\cref{eq:Scal2bound} imply the existence of a $C\in\Rds$ such that
        \begin{equation}\label{eq:omegaSconstaux2}
            \lim_{t\to\infty}(\Scal(t)g)(\zeta(t))+ct-w(\zeta(t))=C.
        \end{equation}
        Finally we have by~\cref{eq:omegaSconstaux1,eq:omegaSconstaux2} that, for any $t\in\Rds^+$,
        \begin{equation*}
            C=\lim_{n\to\infty}(\Scal(t+t_n)g)(\zeta(t+t_n))+c(t+t_n)-w(\zeta(t+t_n))=(\Scal(t)f)(\zeta(t))+ct-w(\zeta(t)).
        \end{equation*}
    \end{proof}

    \begin{lem}\label{approxcurve}
        Let $\zeta$ be a static curve and define, for each $\rho\in(0,1)$, $\zeta_\rho(t)\coloneqq\zeta(\rho t)$. Then
        \begin{equation}\label{eq:approxcurve.1}
            \int_{t_1}^{t_2}\left(L\left(\zeta_\rho,\dot\zeta_\rho\right)+c\right)d\tau\le S_c(\zeta_\rho(t_1),\zeta_\rho(t_2))+o(1-\rho),\qquad\text{for any }t_2\ge t_1,
        \end{equation}
        where $o(\cdot)$ is the Landau symbol.
    \end{lem}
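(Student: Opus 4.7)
My approach is to exploit the convexity of $L$ in its momentum argument to compare the cost of $\zeta_\rho$ to that of $\zeta$ after a time rescaling, and then invoke the $c$--Lagrangian parametrization of $\zeta$ to rewrite the integral along $\zeta$ as an $S_c$--distance.

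First, observing that $\dot\zeta_\rho(\tau)=\rho\,\dot\zeta(\rho\tau)$ for a.e.\ $\tau$, I would apply \ref{condconv}, together with the trivial identity $\rho q=\rho q+(1-\rho)\cdot 0$, to get the pointwise estimate
\[ L(x,\rho q)\le \rho\, L(x,q)+(1-\rho)\, L(x,0),\qquad\text{for every }(x,q)\in T\Gamma. \]
Setting $x=\zeta(\rho\tau)$ and $q=\dot\zeta(\rho\tau)$, adding $c$ to both sides, integrating over $[t_1,t_2]$, and changing variable $s=\rho\tau$ in the principal term reduces the problem to an integral along $\zeta$. Invoking the definition of static curve,
\[ \int_{\rho t_1}^{\rho t_2}\bigl(L(\zeta,\dot\zeta)+c\bigr)\,ds=S_c(\zeta(\rho t_1),\zeta(\rho t_2))=S_c(\zeta_\rho(t_1),\zeta_\rho(t_2)), \]
I would thus obtain
\[ \int_{t_1}^{t_2}\!\bigl(L(\zeta_\rho,\dot\zeta_\rho)+c\bigr)\,d\tau\le S_c(\zeta_\rho(t_1),\zeta_\rho(t_2))+(1-\rho)\!\int_{t_1}^{t_2}\!\bigl(L(\zeta(\rho\tau),0)+c\bigr)\,d\tau. \]

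The remaining task is to show that the last term is $o(1-\rho)$. The integrand is nonnegative: the flux limiter condition~\cref{eq:fluxcrit} gives $L(v,0)=-c_v\ge-c$ at every vertex $v$, while on the interior of an arc $\gamma$ the identity $L_\gamma(s,0)=-\min_\mu H_\gamma(s,\mu)\ge-a_\gamma\ge-c$ holds. The crucial point is that along the static curve this integrand vanishes a.e.: whenever $\zeta(\tau)\in\wtVbf$ this is immediate from $c_{\zeta(\tau)}=c$, while whenever $\zeta(\tau)$ lies in the interior of an arc $\gamma$ whose support belongs to a static class of $\Acal_\Gamma$, the arc satisfies $a_\gamma=c$, so \ref{condqconv} combined with~\cref{eq:ceqa0sig} forces $\min_\mu H_\gamma(s,\mu)=c$ and hence $L_\gamma(s,0)=-c$. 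Once this pointwise vanishing is established, dominated convergence yields $\int_{t_1}^{t_2}\bigl(L(\zeta(\rho\tau),0)+c\bigr)\,d\tau\to 0$ as $\rho\to 1^-$, which delivers~\cref{eq:approxcurve.1}.

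The main obstacle is the penultimate step: verifying that $L(\zeta(\tau),0)+c=0$ a.e.\ on $\zeta$ requires the structural fact that every arc appearing in a static class of $\Acal_\Gamma$ has $a_\gamma=c$, which is a non-trivial consequence of the critical value characterization~\cref{eq:critvalchar} and of the decomposition of the Aubry set described in \cref{arcaubry}. The remaining pieces (convexity inequality, change of variables, static identity) are routine.
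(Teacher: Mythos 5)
Your reduction is clean up to the last step, but that last step contains a genuine gap that the approach cannot repair. The convexity inequality through the origin leaves you with the error term $(1-\rho)\int_{t_1}^{t_2}\bigl(L(\zeta(\rho\tau),0)+c\bigr)\,d\tau$, and your claim that the integrand vanishes a.e.\ along a static curve is false. \Cref{arcaubry} gives only one inclusion (arcs with $a_\gamma=c$ lie in $\Acal_\Gamma$), not its converse: the Aubry set typically contains cycles $\gamma_1*\dotsb*\gamma_k$ on which $\int\sigma_c\,d\tau=0$ by cancellation while every $a_{\gamma_i}<c$. Concretely, take two vertices joined by two arcs with $H_{\gamma_1}(s,\mu)=(\mu-\theta)^2$ and $H_{\gamma_2}(s,\mu)=(\mu+\theta)^2$, $\theta>0$: then $a_{\gamma_i}=0$, $c=\theta^2$, the cycle $\wtgamma_1*\gamma_2$ lies in $\Acal_\Gamma$, and the periodic static curve from \cref{critcurveperiod} traverses it with speed $2\theta$, yet $L_{\gamma_i}(s,0)+c=c-\min_\mu H_{\gamma_i}(s,\mu)=\theta^2>0$ everywhere. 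Your error term is then exactly $(1-\rho)\,\theta^2(t_2-t_1)$, i.e.\ of order $1-\rho$ with a nonvanishing coefficient, which is $O(1-\rho)$ but not $o(1-\rho)$. This is fatal for the intended use in \cref{omegaSconst}, where the $o(1-\rho)$ must be dominated by a term $m(1-\rho)t$ with $m<0$ arbitrarily small in modulus. (A secondary, fixable point: your pointwise inequality $L(x,\rho q)\le\rho L(x,q)+(1-\rho)L(x,0)$ does not hold for \emph{every} $(x,q)\in T\Gamma$ — at a vertex $L(x,0)=-c_x$ may lie strictly below $\lim_{\lambda\to0}L_\gamma(1,\lambda)$, so $L(x,\cdot)$ is not convex there — but this only matters on the null set of times where $\zeta\in\Vbf$ with $\dot\zeta\ne0$.)

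The root of the failure is the choice of comparison line: the secant of $q\mapsto L(x,q)$ between $0$ and $q$ overestimates $L(x,\rho q)$ by a first-order quantity whose coefficient $L(x,0)+c$ has no reason to vanish. The paper instead linearizes at $q=\dot\zeta(t)$ itself: by \cref{laglbound} and the $c$--Lagrangian parametrization of static curves, $q\mapsto\sigma_c(\zeta(t),q)$ is a subtangent to $q\mapsto L(\zeta(t),q)+c$ at $\dot\zeta(t)$, whence $\sigma_c\left(\zeta(t),\dot\zeta(t)\right)\in\dot\zeta(t)\cdot\partial_qL\left(\zeta(t),\dot\zeta(t)\right)$, and Clarke's mean value inequality gives $L\left(\zeta(\rho t),\rho\dot\zeta(\rho t)\right)-L\left(\zeta(\rho t),\dot\zeta(\rho t)\right)\le\ell_\rho(t)(\rho-1)$ with $\ell_\rho(t)\to\sigma_c\left(\zeta(t),\dot\zeta(t)\right)$ for a.e.\ $t$. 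Since $\sigma_c$ is positively $1$--homogeneous in $q$, the first-order terms cancel exactly and dominated convergence produces a genuine $o(1-\rho)$. Any repair of your argument along convexity lines would have to compare $\rho q$ with a point at distance $O(1-\rho)$ from $q$ rather than with $0$, which amounts to the same tangent-line computation.
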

    \begin{proof}
        We preliminarily define the set $E$ made up by the $t\in\Rds$ such that $\zeta$ is differentiable in $t$, $\dot\zeta(t)\ne0$ and $\zeta(t)\notin\Vbf$. If $t\in E$ there is a $\gamma\in\Ecal$ and an $s\in(0,1)$ such that $\zeta(t)=\gamma(s)$ and
        \begin{equation}\label{eq:approxcurve1}
            L\left(\zeta(t),\dot\zeta(t)\right)+c=\sigma_c\left(\zeta(t),\dot\zeta(t)\right)=\sigma_{\gamma,c}^+(s)\frac{\dot\zeta(t)\cdot\dot\gamma(s)}{|\dot\gamma(s)|_2^2},
        \end{equation}
        therefore we have that $q\mapsto\sigma_c(\zeta(t),q)$ is differentiable in $\dot\zeta(t)$ and
        \begin{equation*}
            \dot\zeta(t)\cdot\partial_q\sigma_c\left(\zeta(t),\dot\zeta(t)\right)=\sigma_c\left(\zeta(t),\dot\zeta(t)\right),
        \end{equation*}
        In particular \cref{eq:approxcurve1,laglbound} yield that $q\mapsto\sigma_c(\zeta(t),q)$ is a subtangent to $q\mapsto L(\zeta(t),q)+c$ at $\dot\zeta(t)$ for all $t\in E$, thus, see \cite[Proposition 2.2.7]{Clarke90},
        \begin{equation}\label{eq:approxcurve2}
            \sigma_c\left(\zeta(t),\dot\zeta(t)\right)\in\dot\zeta(t)\cdot\partial_q L\left(\zeta(t),\dot\zeta(t)\right),\qquad\text{for any }t\in E.
        \end{equation}
        We stress out that $\partial_q L$ denotes Clarke's generalized gradient of $L$ in the second variable. We set, for any $\rho\in(0,1)$, the function $\ell_\rho:\Rds\to\Rds$ such that $\ell_\rho(t)$ is the projection of $\sigma_c\left(\zeta_\rho(t),\dot\zeta_\rho(t)\right)$ on $\dot\zeta_\rho(t)\cdot\partial_q L\left(\zeta_\rho(t),\dot\zeta_\rho(t)\right)$ whenever $\rho t\in E$ and
        \begin{equation*}
            \ell_\rho(t)\coloneqq\sigma_c\left(\zeta_\rho(t),\dot\zeta_\rho(t)\right),\qquad\text{otherwise}.
        \end{equation*}
        By \cite[Theorem 1.3.28]{Molchanov17} the functions $\ell_\rho$ are measurable, and from \cref{lagparreg} and \cite[Theorem 2.8.1]{ClarkeLedyaevSternWolenski98} we get
        \begin{equation}\label{eq:approxcurve3}
            \lim_{\rho\to1^-}\ell_\rho(t)=\sigma_c\left(\zeta(t),\dot\zeta(t)\right),\qquad\text{for a.e. }t\in\Rds.
        \end{equation}
        Thanks to \cite[Proposition 2.4.3]{ClarkeLedyaevSternWolenski98} we have that
        \begin{equation*}
            L\left(\zeta(\rho t),\rho\dot\zeta(\rho t)\right)-L\left(\zeta(\rho t),\dot\zeta(\rho t)\right)\le\ell_\rho(t)(\rho-1),\qquad\text{for any }\rho\in(0,1),\,\rho t\in E,
        \end{equation*}
        therefore it follows from~\cref{eq:approxcurve1} that, for any $\rho\in(0,1)$ and $\rho t\in E$,
        \begin{equation}\label{eq:approxcurve4}
            L\left(\zeta_\rho(t),\dot\zeta_\rho(t)\right)+c\le\sigma_c\left(\zeta(\rho t),\dot\zeta(\rho t)\right)+\ell_\rho(t)(\rho-1).
        \end{equation}
        Next we define $E_0$ as the set made up by the $t\in\Rds$ such that $\dot\zeta(t)=0$, then it is apparent that, for any $\rho\in(0,1)$ and a.e. $\rho t\in E_0$,
        \begin{equation}\label{eq:approxcurve5}
            L\left(\zeta_\rho(t),\dot\zeta_\rho(t)\right)+c=L\left(\zeta(\rho t),\dot\zeta(\rho t)\right)+c=\sigma_c\left(\zeta(\rho t),\dot\zeta(\rho t)\right).
        \end{equation}
        Notice that $\Rds\setminus(E\cup E_0)$ is a set of measure zero, thus~\cref{eq:approxcurve4}, \cref{eq:approxcurve5} and the positive homogeneity of $\sigma_c$ in the second variable yield, for any $\rho\in(0,1)$ and a.e. $t\in\Rds$,
        \begin{equation}\label{eq:approxcurve6}
            L\left(\zeta_\rho(t),\dot\zeta_\rho(t)\right)+c\le\sigma_c\left(\zeta_\rho(t),\dot\zeta_\rho(t)\right)+\left(\frac1\rho\sigma_c\left(\zeta_\rho(t),\dot\zeta_\rho(t)\right)-\ell_\rho(t)\right)(1-\rho).
        \end{equation}
        We point out that by \cref{lagparreg}, \cite[Corollary to Proposition 2.2.6]{Clarke90} and~\cref{eq:approxcurve2} there is a constant $M$ independent of $\rho$ such that
        \begin{equation*}
            \left|\sigma_c\left(\zeta(t),\dot\zeta(t)\right)\right|\le M\quad\text{and}\quad|\ell_\rho(t)|\le M\qquad\text{for a.e }t\in\Rds,
        \end{equation*}
        therefore~\cref{eq:approxcurve.1} follows from~\cref{eq:approxcurve3}, \cref{eq:approxcurve6} and the dominated convergence Theorem.
    \end{proof}

    Thanks to the previous \namecref{approxcurve} we can provide an extension of \cref{omegaSconstaux}.

    \begin{lem}\label{omegaSconst}
        Given $\phi\in C(\Gamma)$ and a flux limiter satisfying~\cref{eq:fluxcrit}, let $f\in\omega_\Scal(\phi)$ and $w$ be a subsolution to~\hrefc{}. For any periodic static curve $\zeta$ the function
        \begin{equation}\label{eq:omegaSconst.1}
            t\longmapsto f(\zeta(t))-w(\zeta(t))
        \end{equation}
        is constant.
    \end{lem}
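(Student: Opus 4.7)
The plan is to bootstrap \cref{omegaSconstaux} by applying it with $f$ itself playing the role of $w$. For that to be legitimate we first need to know that $f$ is a subsolution to~\hrefc{}, which is the main preliminary step. Once this is granted, \cref{omegaSconstaux} applied with the subsolution $f$ yields that $t \mapsto (\Scal(t)f)(\zeta(t)) + ct - f(\zeta(t))$ is constant; since $\Scal(0) = \mathrm{id}$, evaluation at $t = 0$ shows the constant is zero, so
\begin{equation*}
    (\Scal(t)f)(\zeta(t)) + ct = f(\zeta(t)) \qquad \text{for every } t \ge 0.
\end{equation*}
On the other hand, \cref{omegaSconstaux} applied to the given $w$ reads $(\Scal(t)f)(\zeta(t)) + ct - w(\zeta(t)) = f(\zeta(0)) - w(\zeta(0))$; substituting the previous identity then gives $f(\zeta(t)) - w(\zeta(t)) = f(\zeta(0)) - w(\zeta(0))$ for every $t \ge 0$, and the periodicity of $\zeta$ promotes this to all $t \in \Rds$.

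To prove that $f$ is a subsolution to~\hrefc{} I would replicate the argument of \cref{unphisubsol}. Fix a diverging sequence $\{t_n\}$ with $\Scal(t_n)\phi + ct_n \to f$ uniformly on $\Gamma$, an arc $\gamma \in \Ecal$, a point $\ovs \in (0,1)$ and a $C^1$ supertangent $\varphi$ to $f \circ \gamma$ at $\ovs$. For $\delta > 0$ smaller than $\inf_n t_n$, the time-shifted functions $v_n(s,t) \coloneqq (\Scal(t_n + t)\phi)(\gamma(s)) + c(t_n + t)$ are viscosity solutions to $\partial_t U + H_\gamma(s, \partial_s U) = c$ on $(0,1)\times(-\delta,\delta)$; the uniform Lipschitz estimate of \cref{loclipconttsol} together with the Arzelà--Ascoli Theorem produces a subsequence converging uniformly to some $v \in C([0,1]\times[-\delta,\delta])$ which, by viscosity stability, still satisfies the same equation and whose trace at $t = 0$ equals $f \circ \gamma$. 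Hence $\varphi$, read as $t$-independent, is a supertangent to $v$ at $(\ovs, 0)$, and the subsolution inequality $H_\gamma(\ovs, \partial_s \varphi(\ovs)) \le c$ follows.

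The main obstacle is exactly this preliminary subsolution step. It essentially repeats \cref{unphisubsol}; the point is that the supremum structure defining $\unphi$ played no essential role in the argument there, so it transfers to any single uniform limit $f \in \omega_\Scal(\phi)$. Once that is in place, the rest of the proof reduces to the two-line algebraic manipulation of \cref{omegaSconstaux} described in the first paragraph.
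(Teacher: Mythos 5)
Your reduction in the first paragraph is correct and elegant: \emph{if} $f$ were known to be a subsolution to~\hrefc{}, then applying \cref{omegaSconstaux} once with $w=f$ (getting $(\Scal(t)f)(\zeta(t))+ct=f(\zeta(t))$ since the constant vanishes at $t=0$) and once with the given $w$ would immediately yield the claim. The genuine gap is the preliminary step, and your assessment that ``the supremum structure defining $\unphi$ played no essential role'' in \cref{unphisubsol} is exactly where the argument breaks. In that proof, the $t$-independent test function $\varphi$ is a supertangent to the limit $v$ at $(\ovs,0)$ only because $v(s,t)\le\unphi(\gamma(s))$ for \emph{all} $t\in[-\delta,\delta]$, and this inequality comes precisely from the definition of $\unphi$ as a supremum over all diverging time sequences: $\{t_n+t\}$ is again such a sequence, so its limit is dominated by $\unphi$. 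For a single $f\in\omega_\Scal(\phi)$ you only control the trace at $t=0$, namely $v(s,0)=f(\gamma(s))$; for $t\ne0$ the function $v(\cdot,t)$ is the trace of a possibly \emph{different} element of $\omega_\Scal(\phi)$ and there is no reason it should lie below $f\circ\gamma$. Hence $v-\varphi$ need not have a local maximum at $(\ovs,0)$, and the subsolution inequality $H_\gamma(\ovs,\partial_s\varphi(\ovs))\le c$ does not follow. One cannot repair this by adding a $t$-dependence to the test function either: the Lipschitz bound of \cref{loclipconttsol} only gives $v(s,t)\le f(\gamma(s))+\ell|t|$, and any $C^1$ majorant of $\ell|t|$ vanishing at $0$ has a nonzero one-sided time derivative there, which destroys the conclusion. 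A posteriori every $f\in\omega_\Scal(\phi)$ equals $u$ and is therefore a subsolution, but that is the content of \cref{ltb} itself and cannot be assumed here.

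The paper's proof takes a different and unconditional route: it argues by contradiction, using the Lipschitz continuity of $f$ and $w$ to make $t\mapsto f(\zeta(t))-w(\zeta(t))$ absolutely continuous, picking (by periodicity) a point where its derivative is negative, and then exploiting the reparametrized curves $\zeta_\rho(t)=\zeta(\rho t)$ of \cref{approxcurve} together with \cref{subsolextstatclass} to produce the strict inequality $(\Scal(t)f)(\zeta(t))+ct-w(\zeta(t))<f(\zeta(0))-w(\zeta(0))$, contradicting the constancy asserted by \cref{omegaSconstaux}. If you want to keep your cleaner algebraic reduction, you would first need an independent proof that every element of $\omega_\Scal(\phi)$ is a critical subsolution, which is not available at this stage of the paper.
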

    \begin{proof}
        We proceed by contradiction, assuming that~\cref{eq:omegaSconst.1} is not constant. We start noticing that $f$ and $w$ are both Lipschitz continuous by \cref{loclipconttsol,subsollip}, respectively, thus~\cref{eq:omegaSconst.1} is absolutely continuous since it is the combination of Lipschitz and absolutely continuous functions. It is then apparent that requiring~\cref{eq:omegaSconst.1} to be not constant is equivalent to ask that
        \begin{equation}\label{eq:omegaSconst1}
            |\{t\in\Rds:D(f(\zeta(t))-w(\zeta(t)))\ne0\}|>0.
        \end{equation}
        The periodicity of~\cref{eq:omegaSconst.1} is a trivial consequence of the periodicity of $\zeta$, which combined with~\cref{eq:omegaSconst1} implies that its derivative is negative at some points and positive at some others. We then assume without loss of generality that~\cref{eq:omegaSconst.1} is differentiable at 0 and that $m\coloneqq D(f(\zeta(0))-w(\zeta(0)))<0$. It follows that, for any $t$ in a neighborhood of 0,
        \begin{equation}\label{eq:omegaSconst2}
            f(\zeta(t))-w(\zeta(t))\le f(\zeta(0))-w(\zeta(0))+mt+o(t).
        \end{equation}
        Thanks to \cref{approxcurve} we have that, for any $t\ge0$ and $\rho\in(0,1)$,
        \begin{align*}
            (\Scal(t)f)(\zeta(t))+ct\le&\,f(\zeta((1-\rho)t))+\int_{\left(\frac1\rho-1\right)t}^{\frac t\rho}\left(L\left(\zeta_\rho,\dot\zeta_\rho\right)+c\right)d\tau\\
            \le&\,f(\zeta((1-\rho)t))+S_c(\zeta((1-\rho)t),\zeta(t))+o(1-\rho),
        \end{align*}
        hence \cref{subsolextstatclass} yields that for any $t\in\Rds^+$ and $\rho\in(0,1)$,
        \begin{equation}\label{eq:omegaSconst3}
            (\Scal(t)f)(\zeta(t))+ct-w(\zeta(t))\le f(\zeta((1-\rho)t))-w(\zeta((1-\rho)t))+o(1-\rho).
        \end{equation}
        Finally~\cref{eq:omegaSconst2,eq:omegaSconst3} show that fixed $t>0$ and for any $\rho$ sufficiently near 1
        \begin{equation*}
            (\Scal(t)f)(\zeta(t))+ct-w(\zeta(t))\le f(\zeta(0))-w(\zeta(0))+m(1-\rho)t+o(1-\rho),
        \end{equation*}
        therefore, since $m<0$, a suitable choice of $\rho$ proves that
        \begin{equation*}
            (\Scal(t)f)(\zeta(t))+ct-w(\zeta(t))<f(\zeta(0))-w(\zeta(0)),
        \end{equation*}
        in contradiction with \cref{omegaSconstaux}.
    \end{proof}

    \begin{lem}\label{approxuniflim}
        Let $\phi\in C(\Gamma)$, $c_x$ be a flux limiter satisfying~\cref{eq:fluxcrit} and $u$ be as in~\cref{eq:ltb.1}. If $f\in\omega_\Scal(\phi)$, $\zeta$ is a static curve and $\varepsilon>0$, then there is a $t\in\Rds^+$ such that
        \begin{equation*}
            |f(\zeta(t))-u(\zeta(t))|<\varepsilon.
        \end{equation*}
    \end{lem}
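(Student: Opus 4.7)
The lower bound $f(\zeta(t)) \ge u(\zeta(t))$ follows from \cref{eq:omegalbound}, so the task reduces to finding $t \in \Rds^+$ with $f(\zeta(t)) \le u(\zeta(t)) + \varepsilon$. My strategy is to construct, along a sequence $t_n \to \infty$ witnessing the uniform limit $\Scal(t_n)\phi + ct_n \to f$, an admissible curve in the Lax--Oleinik representation \cref{eq:vft} for $(\Scal(t_n)\phi)(\zeta(T_n))$ whose total $L+c$-action lies within $\varepsilon$ of $u(\zeta(T_n))$, where the endpoint $T_n$ on $\zeta$ is tuned so that the total time is exactly $t_n$. The device that allows me to match the time $t_n$ while keeping the action close to $S_c$ is the slowed static curve $\zeta_{\rho_n}$ of \cref{approxcurve}, with $\rho_n \to 1^-$.

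Here is the implementation. First I would pick $z \in \Gamma$ attaining the minimum in $w(\zeta(0)) = \min_y(\phi(y) + S_c(y,\zeta(0)))$; since $\zeta(0) \in \wtAcal_\Gamma$, the defining formula \cref{eq:ltb.1} together with the triangle inequality for $S_c$ yields $u(\zeta(0)) = w(\zeta(0))$. I then fix a curve $\eta^*\colon [0, T^*] \to \Gamma$ from $z$ to $\zeta(0)$ with $\int_0^{T^*}(L(\eta^*, \dot\eta^*) + c)\,d\tau \le S_c(z, \zeta(0)) + \varepsilon/3$; such an $\eta^*$ exists because $S_c(z, \zeta(0))$ coincides with the infimum of $\int(L+c)\,d\tau$ over connecting curves (via the Lagrangian reparametrization machinery of the appendices). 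For each $n$ with $t_n > T^*$ I would choose $\rho_n \in (0,1)$ with $\rho_n \to 1$ and $(1-\rho_n)t_n \to 0$ (for instance $\rho_n = 1 - 1/t_n^2$), set $T_n \coloneqq \rho_n(t_n - T^*)$, and form the concatenation $\xi_n \coloneqq \eta^* * \zeta_{\rho_n}|_{[0,\,t_n-T^*]}$ running from $z$ to $\zeta(T_n)$ in total time $t_n$.

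Plugging $\xi_n$ into \cref{eq:vft} and applying \cref{approxcurve} to the second piece gives
\begin{equation*}
(\Scal(t_n)\phi)(\zeta(T_n)) + ct_n \le w(\zeta(0)) + \tfrac{\varepsilon}{3} + S_c(\zeta(0), \zeta(T_n)) + \delta_n,
\end{equation*}
where the error $\delta_n$ is bounded (by the computation underlying \cref{approxcurve}) by a constant multiple of $(1-\rho_n)(t_n - T^*)$ and hence tends to $0$ by my choice of $\rho_n$. Since the support of the static curve $\zeta$ lies in a single static class of $\wtAcal_\Gamma$, \cref{subsolextstatclass} applied to the subsolution $u$ identifies $w(\zeta(0)) + S_c(\zeta(0), \zeta(T_n))$ with $u(\zeta(T_n))$, so the estimate rewrites as $(\Scal(t_n)\phi)(\zeta(T_n)) + ct_n \le u(\zeta(T_n)) + \varepsilon/3 + \delta_n$. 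Combined with the uniform convergence $\Scal(t_n)\phi + ct_n \to f$, this forces $f(\zeta(T_n)) \le u(\zeta(T_n)) + \varepsilon$ for all $n$ sufficiently large, and setting $t \coloneqq T_n$ concludes the argument.

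The main technical hurdle I expect is that the $o(1-\rho)$ error in \cref{approxcurve} is stated for fixed endpoints, whereas I apply it over $[0, t_n - T^*]$ whose length diverges with $n$; a closer inspection of the proof of \cref{approxcurve} shows that the error is in fact $O((1-\rho)(t_2 - t_1))$, which is exactly what forces the balanced choice $(1-\rho_n)t_n \to 0$. A secondary point needing care is the availability of the near-optimal curve $\eta^*$, i.e.\ the identification $S_c(z,\zeta(0)) = \inf\int(L+c)\,d\tau$, which rests on the Lagrangian reparametrization theory in the appendices.
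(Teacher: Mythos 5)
Your proof is correct and its skeleton is the one the paper uses: pick a minimizer $z$ for $u(\zeta(0))=w(\zeta(0))$, connect $z$ to $\zeta(0)$ by a curve whose $\int(L+c)$-action is within $\varepsilon$ of $S_c(z,\zeta(0))$ (this is exactly \cref{approxlagpar}), then spend the remaining time $t_n-T^*$ riding the static curve, and finish with \cref{subsolextstatclass}, the uniform convergence along $t_n$, and the lower bound \cref{eq:omegalbound}. Where you genuinely diverge is in the second leg. You ride the slowed curve $\zeta_{\rho_n}$ and invoke \cref{approxcurve}, which forces you to (a) upgrade its $o(1-\rho)$ error to a bound of the form $C(1-\rho)(t_2-t_1)$ uniform in the interval, and (b) balance $\rho_n$ against $t_n$ so that $(1-\rho_n)t_n\to0$. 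Both points do go through --- the integrand estimate in the proof of \cref{approxcurve} is pointwise with $|\sigma_c|$ and $|\ell_\rho|$ uniformly bounded along $\zeta$, so the cruder linear-in-$(t_2-t_1)$ bound holds --- but none of this is needed. By the very definition of a static curve, $\int_{t_1}^{t_2}\bigl(L\bigl(\zeta,\dot\zeta\bigr)+c\bigr)\,d\tau=S_c(\zeta(t_1),\zeta(t_2))$ holds \emph{exactly} for all $t_1\le t_2$ (static curves already carry a $c$--Lagrangian parametrization, by \cref{laglbound}), so you may take $\rho=1$: concatenating $\eta^*$ with $\zeta|_{[0,t_n-T^*]}$ in \cref{eq:vft} (equivalently, using the semigroup property as the paper does) produces zero error on the second leg and lands at the point $t=t_n-T^*$ of $\zeta$, which is all the lemma asks for since the endpoint on $\zeta$ is not prescribed. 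The slowdown device of \cref{approxcurve} is reserved in the paper for \cref{omegaSconst}, where one genuinely needs to compare $f$ at two \emph{different} points of $\zeta$ reached in the \emph{same} time; here it only costs you an extra error term and a harder version of \cref{approxcurve}. One last cosmetic point: with your $\varepsilon/3+\varepsilon/3+\delta_n$ bookkeeping you obtain $f(\zeta(T_n))\le u(\zeta(T_n))+\varepsilon$ rather than the strict inequality in the statement; start from $\varepsilon/4$ (or note $\delta_n\to0$) to close that.
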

    \begin{proof}
        By definition there is a $z\in\Gamma$ such that
        \begin{equation*}
            u(\zeta(0))=\phi(z)+S_c(z,\zeta(0)),
        \end{equation*}
        then we choose an optimal curve $\xi:[0,T]\to\Gamma$ for $S_c(z,\zeta(0))$. Following \cref{approxlagpar} we also choose a curve $\xi_\varepsilon:[0,T_\varepsilon]\to\Gamma$ reparametrization of $\xi$ such that
        \begin{equation*}
            \int_0^T\sigma_c\left(\xi,\dot\xi\right)d\tau+\frac\varepsilon2\ge\int_0^{T_\varepsilon}\left(L\left(\xi_\varepsilon,\dot\xi_\varepsilon\right)+c\right)d\tau.
        \end{equation*}
        This implies that
        \begin{equation}\label{eq:approxuniflim1}
            u(\zeta(0))+\frac\varepsilon2\ge\phi(z)+\int_0^{T_\varepsilon}\left(L\left(\xi_\varepsilon,\dot\xi_\varepsilon\right)+c\right)d\tau\ge(\Scal(T_\varepsilon)\phi)(\zeta(0))+cT_\varepsilon.
        \end{equation}
        Next we let $\{t_n\}_{n\in\Nds}$ be a positive diverging sequence such that $\Scal(t_n)\phi+ct_n$ converges uniformly to $f$, then we have that for any $n$ big enough
        \begin{equation}\label{eq:approxuniflim2}
            \|\Scal(t_n)\phi+ct_n-f\|_\infty<\frac\varepsilon2\qquad\text{and}\qquad t_n>T_\varepsilon.
        \end{equation}
        We fix a $t_n$ satisfying~\cref{eq:approxuniflim2} and set $t\coloneqq t_n-T_\varepsilon$. We observe that by \cref{subsolextstatclass}
        \begin{equation*}
            u(\zeta(t))=u(\zeta(0))+S_c(\zeta(0),\zeta(t))=u(\zeta(0))+\int_0^t\left(L\left(\zeta,\dot\zeta\right)+c\right)d\tau.
        \end{equation*}
        This identity, \cref{Scalprop}, \cref{eq:approxuniflim1,eq:approxuniflim2}, yield
        \begin{align*}
            f(\zeta(t))-\frac\varepsilon2<&\,(\Scal(t_n)\phi)(\zeta(t))+ct_n=(\Scal(t)\Scal(T_\varepsilon)\phi)(\zeta(t))+c(t+T_\varepsilon)\\
            \le&\,(\Scal(T_\varepsilon)\phi)(\zeta(0))+cT_\varepsilon+\int_0^t\left(L\left(\zeta,\dot\zeta\right)+c\right)d\tau<u(\zeta(t))+\frac\varepsilon2
        \end{align*}
        which proves, together with~\cref{eq:omegalbound}, our claim.
    \end{proof}

    We can finally provide the proof of \cref{ltb}.

    \begin{proof}[Proof of \cref{ltb}]
        Let $\unphi$ be defined by~\cref{eq:limsuptsol}. \Cref{unphisubsol}, \cref{eq:omegalbound,eq:unphisup} show that $\unphi$ is a subsolution to~\hrefc{} satisfying
        \begin{equation}\label{eq:ltb1}
            \unphi\ge u,\qquad\text{on }\Gamma.
        \end{equation}
        Moreover, \cref{omegaSconst,approxuniflim} yield that $\unphi=u$ on the support all periodic static curves, thus, by \cref{subsolextstatclass,critcurveperiod}, $\unphi=u$ on $\wtAcal_\Gamma$. Finally~\cref{eq:ltb1} and the maximality of $u$, see \cref{eiksol}, prove that $\unphi=u$ on $\Gamma$. This concludes the proof thanks to~\cref{eq:omegalbound}.
    \end{proof}

    \subsection{The Supercritical Case}\label{supcritsec}

    This \lcnamecref{supcritsec} is devoted to the proof of \cref{supltb}. The argument is similar to the one used in \cref{convfintime} to prove \cref{ltbfinite}, therefore here we will just highlight the main steps.

    We start with the following \namecref{supweakltb}, whose proof is almost identical to the one given for \cref{weakltb}.

    \begin{prop}\label{supweakltb}
        Let $c_x$ be a flux limiter such that $\ova\coloneqq\max\limits_{x\in\Vbf}c_x>c$ and $u$ be a solution to~\cref{eq:globeik} in $\Gamma\setminus\Vbf_{\ova}$, where $\Vbf_{\ova}$ is defined as in~\cref{eq:supltb.1}, then $\Scal(t)u=u-\ova t$ on $\Gamma\times\Rds^+$.
    \end{prop}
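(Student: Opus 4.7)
The plan is to show that $v(x,t)\coloneqq u(x)-\ova t$ is a solution to~\cref{eq:globteik} with flux limiter $c_x$ and initial datum $u$; then the uniqueness part of \cref{exunsolt} forces $\Scal(t)u=u-\ova t$ on $\Gamma\times\Rds^+$. The verification follows the same template as \cref{weakltb}, with $c$ replaced by $\ova$ and $\wtVbf$ replaced by $\Vbf_{\ova}$, since by assumption $u$ is in particular a (sub)solution to $\Hcal(x,Du)=\ova$ on each arc.

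For the subsolution conditions of \cref{deftsubsol}, fix $\gamma\in\Ecal$ and a $C^1$ supertangent $\varphi$ to $(s,t)\mapsto u(\gamma(s))-\ova t$ at some $(s^*,t^*)\in\Qcal$. Then $s\mapsto\varphi(s,t^*)+\ova t^*$ is a supertangent to $u\circ\gamma$ at $s^*$, so $H_\gamma(s^*,\partial_s\varphi(s^*,t^*))\le\ova$; the one-sided difference-quotient argument of \cref{weakltb} applied to the affine $t$-dependence yields $\partial_t\varphi(s^*,t^*)\le-\ova$, hence $\partial_t\varphi+H_\gamma\le0$ at $(s^*,t^*)$. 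Condition \labelcref{fluxcond} of \cref{deftsubsol} is immediate: any $C^1$ supertangent $\psi$ to the affine map $t\mapsto u(x)-\ova t$ at $T>0$ satisfies $\partial_t\psi(T)=-\ova\le-c_x$, the last inequality being the definition of $\ova$.

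The supersolution verification inside $\Qcal$ is the symmetric counterpart of the previous step. The delicate item, and the only one where the hypothesis $\ova>c$ enters instead of $c\ge c_x$ as in \cref{weakltb}, is condition \ref{tstateconst} in \cref{deftsupsol}. Let $\psi$ be a $C^1$ subtangent to $v(x,\cdot)$ at $T$ with $\partial_t\psi(T)<-c_x$: since $v(x,\cdot)$ is affine of slope $-\ova$ and $\psi$ is $C^1$, one necessarily has $\partial_t\psi(T)=-\ova$, which forces $c_x<\ova$, i.e., $x\in\Vbf\setminus\Vbf_{\ova}$. Because $u$ is a solution to~\cref{eq:globeik} on $\Gamma\setminus\Vbf_{\ova}$, condition \labelcref{stateconst} of \cref{defsol} supplies an arc $\gamma\in\Gamma_x$ such that $H_\gamma(1,\partial_s\chi(1))\ge\ova$ for every constrained $C^1$ subtangent $\chi$ to $u\circ\gamma$ at $1$. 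Given any constrained $C^1$ subtangent $\varphi$ to $(s,t)\mapsto u(\gamma(s))-\ova t$ at $(1,T)$, the slice $s\mapsto\varphi(s,T)+\ova T$ is a constrained subtangent to $u\circ\gamma$ at $1$, so $H_\gamma(1,\partial_s\varphi(1,T))\ge\ova$, while the slice $t\mapsto\varphi(1,t)$ is a $C^1$ subtangent at $T$ to the affine map $t\mapsto u(\gamma(1))-\ova t$, forcing $\partial_t\varphi(1,T)=-\ova$. Summing yields $\partial_t\varphi(1,T)+H_\gamma(1,\partial_s\varphi(1,T))\ge0$, which is precisely \ref{tstateconst}. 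The only real subtlety is this last bookkeeping step; once it is observed, everything else is a formal transcription of \cref{weakltb}.
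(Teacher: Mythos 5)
Your proposal is correct and follows exactly the route the paper intends: the paper's own ``proof'' of \cref{supweakltb} is just the remark that one repeats the argument of \cref{weakltb} with $c$ replaced by $\ova$ and $\wtVbf$ by $\Vbf_{\ova}$, which is precisely what you carry out (verify that $u-\ova t$ is a flux-limited solution to~\cref{eq:globteik} and invoke the uniqueness in \cref{exunsolt}). Your explicit treatment of condition \labelcref{tstateconst} — noting that a $C^1$ subtangent to the affine map $t\mapsto u(x)-\ova t$ forces $\partial_t\psi(T)=-\ova$, hence $c_x<\ova$ and the state constraint of \cref{defsol} at $x\in\Gamma\setminus\Vbf_{\ova}$ applies — is the right bookkeeping and matches the role played by~\cref{eq:fluxcrit} in the original argument.
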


    Proceeding as in the proof of \cref{subsolltb}, replacing \cref{liveoptcurve} with \cref{supliveoptcurve}, we get a convergence result when the initial datum is a subsolution:

    \begin{prop}\label{supsubsolltb}
        Given a flux limiter $c_x$ such that $\ova\coloneqq\max\limits_{x\in\Vbf}c_x>c$ and a subsolution $w$ to~\cref{eq:globeik}, we define
        \begin{equation*}
            u(x)\coloneqq\min_{y\in\Vbf_{\ova}}(w(y)+S_{\ova}(y,x)),\qquad\text{for }x\in\Gamma.
        \end{equation*}
        Then there is a time $T_w$, depending on $w$ and $\ova$, such that
        \begin{equation*}
            (\Scal(t)w)(x)+\ova t=u(x),\qquad\text{for any }x\in\Gamma,\,t\ge T_w.
        \end{equation*}
    \end{prop}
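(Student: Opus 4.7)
The plan is to mirror the proof of \cref{subsolltb} step by step, with $c$ replaced by $\ova$, the extended Aubry set $\wtAcal_\Gamma$ replaced by $\Vbf_{\ova}$, and the critical-value tools swapped for their supercritical counterparts, namely \cref{supcritsol,supweakltb} and the supercritical analog \cref{supliveoptcurve} of \cref{liveoptcurve}. As in the critical case, I would establish the two inequalities
\begin{equation*}
    (\Scal(t)w)(x) + \ova t \ge u(x) \qquad \text{and} \qquad (\Scal(t)w)(x) + \ova t \le u(x)
\end{equation*}
separately, valid for all $x \in \Gamma$ and all $t \ge T_w$.

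For the lower bound, I would invoke \cref{supliveoptcurve} to obtain a time $T_w$, depending on $w$ and $\ova$, such that for every $x \in \Gamma$, $t \ge T_w$, and every optimal curve $\xi : [0,t] \to \Gamma$ for $(\Scal(t)w)(x)$ there exists some $t' \in [0,t]$ with $\xi(t') \in \Vbf_{\ova}$. Splitting the integral defining $(\Scal(t)w)(x)$ at $t'$, applying \cref{laglbound} on each piece to pass from $L + \ova$ to $\sigma_{\ova}$, and using \cref{subsolchar} applied to $w$ (a subsolution to $\Hcal = \ova$) to replace $w(\xi(0)) + S_{\ova}(\xi(0),\xi(t'))$ by the lower bound $w(\xi(t'))$, I obtain
\begin{equation*}
    (\Scal(t)w)(x) + \ova t \ge w(\xi(t')) + S_{\ova}(\xi(t'), x) \ge \min_{y \in \Vbf_{\ova}} (w(y) + S_{\ova}(y,x)) = u(x),
\end{equation*}
exactly as in the proof of \cref{subsolltb}.

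For the upper bound, I would first note that by \cref{supcritsol} the function $u$ is the maximal subsolution to $\Hcal = \ova$ agreeing with $w$ on $\Vbf_{\ova}$ and is a solution in $\Gamma \setminus \Vbf_{\ova}$; in particular $w \le u$ on $\Gamma$. Then \cref{supweakltb} gives $\Scal(t)u = u - \ova t$ on $\Gamma \times \Rds^+$, and the monotonicity property in \cref{Scalprop} yields
\begin{equation*}
    (\Scal(t)w)(x) + \ova t \le (\Scal(t)u)(x) + \ova t = u(x), \qquad \text{for any } (x,t) \in \Gamma \times \Rds^+,
\end{equation*}
which combined with the lower bound proves the claim.

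The only genuinely non-routine ingredient is \cref{supliveoptcurve}, the supercritical counterpart of \cref{liveoptcurve}, which is the analog here of the finite sojourn outside the extended Aubry set. Since this is being invoked as an earlier result in the paper, the remaining work is essentially bookkeeping, and I expect no substantive obstacle beyond carefully tracking the replacements $c \rightsquigarrow \ova$ and $\wtAcal_\Gamma \rightsquigarrow \Vbf_{\ova}$ throughout the argument of \cref{subsolltb}.
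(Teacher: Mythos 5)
Your proposal is correct and follows essentially the same route as the paper, which itself only states that \cref{supsubsolltb} is obtained by "proceeding as in the proof of \cref{subsolltb}, replacing \cref{liveoptcurve} with \cref{supliveoptcurve}"; your lower bound via \cref{supliveoptcurve}, \cref{laglbound} and \cref{subsolchar}, and your upper bound via $w\le u$, \cref{supweakltb} and the monotonicity in \cref{Scalprop}, are exactly the intended substitutions.
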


    Finally we have:

    \begin{proof}[Proof of \cref{supltb}]
        We point out that, since $\ova>c\ge a_0$, \cref{timeoflagpar} yields that every curve on $\Gamma$ has $\ova$--Lagrangian parametrization. Then, arguing as in the proof of \cref{ltbfinite} with straightforward modification, e.g., using \cref{supsubsolltb} instead of \cref{subsolltb}, we prove our claim.
    \end{proof}

    \section{Fixed Points of the Semigroup \texorpdfstring{$\Scal$}{S}}\label{fixpointsec}

    In this paper we have characterized the critical value $c$ dynamically, using closed curves on the network $\Gamma$. Alternatively $c$ can be seen as the minimum $a\in\Rds$ such that~\cref{eq:globeik} admits subsolutions. Both these characterizations are given in \cite{SiconolfiSorrentino18}. In more traditional settings, additional characterizations are known. In particular, see for instance \cite{Fathi08}, on compact connected Riemannian manifolds the critical value is the only value $a$ such that the semigroup $\phi\mapsto\Scal(t)\phi+at$ admits fixed points and these fixed points are the solutions to the respective Eikonal equation. In our case, however, the presence of the flux limiters influences this result.

    Indeed, if it is given a flux limiter $c_x$ satisfying~\cref{eq:fluxcrit}, \cref{ltb} shows that, for any $\phi\in C(\Gamma)$, $\Scal(t)\phi+ct$ converges to a continuous function as $t$ tends to $\infty$. This implies that $\Scal(t)\phi+bt$ diverges as $t$ tends to $\infty$ whenever $b\ne c$, i.e., $\phi\mapsto\Scal(t)\phi+bt$ does not admit fixed points. We know from \cref{weakltb} that the solutions to \hrefc{} in $\Gamma\setminus\wtVbf$ are fixed points of $\phi\mapsto\Scal(t)\phi+ct$, while the converse implication easily follows from the fact that, given a fixed point $\phi\in C(\Gamma)$, $\phi-ct$ is a solution to~\cref{eq:globteik}.\\
    Arguing in the same way we can further show that, if the flux limiter $c_x$ is such that $\ova\coloneqq\max\limits_{x\in\Vbf}c_x>c$, $\phi\mapsto\Scal(t)\phi+bt$ admits fixed points only if $b=\ova$ and these fixed points are exactly the solutions to~\cref{eq:globeik} in $\Gamma\setminus\Vbf_{\ova}$. More precisely we have the following \namecref{fixpoint}.

    \begin{theo}\label{fixpoint}\leavevmode
        \begin{myenum}
            \item Given a flux limiter $c_x$ satisfying~\cref{eq:fluxcrit}, the only value $b$ such that the semigroup $C(\Gamma)\ni\phi\mapsto\Scal(t)\phi+bt$ admits fixed points is the critical value $c$. These fixed points are the solutions to~\hrefc{} in $\Gamma\setminus\wtVbf$.
            \item Given a flux limiter $c_x$ such that $\ova\coloneqq\max\limits_{x\in\Vbf}c_x>c$, the only value $b$ such that the semigroup $C(\Gamma)\ni\phi\mapsto\Scal(t)\phi+bt$ admits fixed points is $\ova$. These fixed points are the solutions to~\cref{eq:globeik} in $\Gamma\setminus\Vbf_{\ova}$.
        \end{myenum}
    \end{theo}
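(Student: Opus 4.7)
My plan is to formalize the roadmap sketched in the paragraph preceding the statement, using the convergence results of \cref{ltb,supltb} together with \cref{weakltb,supweakltb}. The argument splits into three parts: ruling out values $b\ne c$ (respectively $b\ne\ova$), the easy sufficiency of the characterization, and the more delicate necessity direction.

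For the uniqueness of $b$, suppose $\phi\in C(\Gamma)$ is a fixed point of $\phi\mapsto\Scal(t)\phi+bt$, i.e.\ $\Scal(t)\phi=\phi-bt$ for every $t\in\Rds^+$. Then $\Scal(t)\phi+ct=\phi+(c-b)t$ and $\Scal(t)\phi+\ova t=\phi+(\ova-b)t$. By \cref{ltb} the left-hand side in part (i) converges uniformly as $t\to\infty$, which forces $b=c$; by \cref{supltb} the left-hand side in part (ii) is eventually constant in $t$, which forces $b=\ova$. The sufficiency of the characterization is then immediate from \cref{weakltb} (resp.\ \cref{supweakltb}): if $u$ solves \hrefc{} in $\Gamma\setminus\wtVbf$, then $\Scal(t)u=u-ct$, so $u$ is a fixed point of $\phi\mapsto\Scal(t)\phi+ct$; the analogous statement holds for $\ova$ and $\Vbf_{\ova}$.

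For the necessity direction, given a fixed point $\phi$, the function $v(x,t)\coloneqq\phi(x)-ct$ (resp.\ $\phi(x)-\ova t$) is a solution to \cref{eq:globteik} with initial datum $\phi$ and flux limiter $c_x$. The plan is to unwind \cref{deftsubsol,deftsupsol} applied to separated-variable test functions of the form $\psi(s,t)\coloneqq\varphi(s)-ct$ and thereby recover the conditions of \cref{defsol}. On each arc, the $(s,t)$--sub/supersolution inequalities for such $\psi$ at a point $(s^*,t^*)\in\Qcal$ collapse to $H_\gamma(s^*,\partial_s\varphi(s^*))\le c$ and $H_\gamma(s^*,\partial_s\varphi(s^*))\ge c$, showing that $\phi\circ\gamma$ solves \ghrefc{} on $(0,1)$. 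At a vertex $x$, the flux condition \cref{fluxcond} yields $-c\le -c_x$, which is automatic under \cref{eq:fluxcrit}; the nontrivial step is the supersolution vertex test \cref{tstateconst}, which is activated by the constant-in-$s$ subtangent $t\mapsto\phi(x)-ct$ precisely when $-c<-c_x$, i.e.\ when $x\in\Vbf\setminus\wtVbf$. At such vertices, testing with constrained subtangents of the form $\chi(s)-ct$ (with $\chi$ a constrained $C^1$ subtangent to $\phi\circ\gamma$ at $1$) produces some $\gamma\in\Gamma_x$ with $H_\gamma(1,\partial_s\chi(1))\ge c$, which is exactly \cref{stateconst} of \cref{defsol}. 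Hence $\phi$ solves \hrefc{} on $\Gamma\setminus\wtVbf$. Part (ii) is identical after replacing $c,\wtVbf$ by $\ova,\Vbf_{\ova}$ throughout, and is in fact slightly simpler because $c_x<\ova$ for every $x\notin\Vbf_{\ova}$ makes the strict inequality $\partial_t\psi<-c_x$ automatic.

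The main subtlety is the bookkeeping in this last step: tracking how the time-dependent vertex conditions of \cref{deftsubsol,deftsupsol} specialize under separated test functions $\varphi(s)-ct$ to the stationary vertex condition \cref{stateconst}, and seeing that the strict inequality $\partial_t\psi(T)<-c_x$ in \cref{tstateconst} is exactly what carves out the set $\Gamma\setminus\wtVbf$ (resp.\ $\Gamma\setminus\Vbf_{\ova}$) on which the fixed-point elements solve the Eikonal equation. Once this correspondence is made explicit the theorem follows.
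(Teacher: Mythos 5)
Your overall architecture is exactly the paper's: uniqueness of $b$ from the convergence results \cref{ltb,supltb}, sufficiency from \cref{weakltb,supweakltb}, and necessity by observing that a fixed point $\phi$ makes $\phi-ct$ (resp.\ $\phi-\ova t$) a solution to \cref{eq:globteik}. The first two parts of your argument are correct as written.

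There is, however, a genuine gap in your necessity step for part (i). You claim that the strict inequality $\partial_t\psi(T)<-c_x$ in \labelcref{tstateconst} of \cref{deftsupsol} ``carves out exactly $\Gamma\setminus\wtVbf$''. It does not: since $t\mapsto\phi(x)-ct$ is differentiable, every $C^1$ subtangent to it has derivative exactly $-c$, so \labelcref{tstateconst} activates precisely at the vertices with $c_x<c$, i.e.\ on $\Gamma\setminus\{x\in\Vbf:c_x=c\}$. But $\wtVbf$ is by definition $\{x\in\Vbf\setminus\Acal_\Gamma:c_x=c\}$, so $\Vbf\setminus\wtVbf$ also contains the vertices of $\Acal_\Gamma\cap\Vbf$ with $c_x=c$, and at those your test-function argument produces nothing: you have not verified the state constraint \labelcref{stateconst} of \cref{defsol} there, so you have not shown that $\phi$ is a solution to \hrefc{} in $\Gamma\setminus\wtVbf$, only in the a priori smaller set $\Gamma\setminus\{x\in\Vbf:c_x=c\}$. (Part (ii) is unaffected, since every $x\notin\Vbf_{\ova}$ has $c_x<\ova$ strictly.) The cleanest repair bypasses the unwinding altogether: if $\Scal(t)\phi+ct\equiv\phi$, then $\phi$ coincides with the uniform limit, which by \cref{ltb} is the function $u$ of \cref{eq:ltb.1} (with initial datum $\phi$), and $u$ is the unique solution to \hrefc{} in $\Gamma\setminus\wtVbf$ agreeing with $w$ on $\wtAcal_\Gamma$ by \cref{eiksol,maxsubsol}, as recorded in the remark following \cref{ltb}. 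Alternatively, one must separately invoke the rigidity of critical subsolutions on the Aubry set to recover \labelcref{stateconst} at the vertices of $\Acal_\Gamma$ with $c_x=c$; your proposal as written does neither.
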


    \appendix

    \section{Reparametrizations of Curves}\label{repcurvesec}

    Solutions to the time-dependent problem~\cref{eq:globteik} are given through a Lax--Oleinik type operator, while the solutions to the Eikonal problem~\hrefc{} are identified via a Hopf--Lax type formula exploiting the weak KAM theory. It is then clear that in order to perform our asymptotic analysis we need to establish a relationship between these two representation formulas. Following~\cite{DaviniSiconolfi06,FathiSiconolfi05}, this is done through reparametrizations of curves on $\Gamma$.\\
    In addition to their relevance for the asymptotic analysis, these results are also crucial for the proof of \cref{minactlip} and consequently of the local Lipschitz continuity of the solutions to the evolutive problem, see \cref{loclipconttsol}.

    \begin{defin}
        Given an absolutely continuous curve $\xi:[0,T]\to\Rds^N$, a curve $\zeta:[0,T']\to\Rds^N$ is called a \emph{reparametrization} of $\xi$ if there exists a nondecreasing surjective absolutely continuous function $\psi$ from $[0,T']$ onto $[0,T]$ with
        \begin{equation*}
            \zeta(t)=\xi\circ\psi(t),\qquad\text{for any }t\in\left[0,T'\right].
        \end{equation*}
    \end{defin}

    Note that if $\zeta$ is a reparametrization of $\xi$, the converse property in general is not true for $\psi$ could have not strictly positive derivative for a.e. $t$, see Zarecki criterion for an absolutely continuous inverse in~\cite{Bernal18}. We have that reparametrizations are absolutely continuous:

    \begin{lem}\label{reparac}
        \emph{\cite[Corollary 4]{SerrinVarberg69}} Let $\xi:[0,T]\to\Rds^N$ be a curve and $\psi:[0,T']\to[0,T]$ be absolutely continuous and nondecreasing. Then the reparametrization $\zeta\equiv\xi\circ\psi$ of $\xi$ is absolutely continuous and
        \begin{equation*}
            \frac d{dt}\zeta(t)=\dot\xi(\psi(t))\cdot\dot\psi(t),\qquad\text{a.e.\ in }\left[0,T'\right].
        \end{equation*}
    \end{lem}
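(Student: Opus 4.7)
The statement is a standard result from real analysis (as indicated by the citation to Serrin--Varberg); the proof I would give proceeds in two independent steps: establishing absolute continuity of $\zeta$ and then deriving the chain-rule formula.

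For the absolute continuity of $\zeta$, my plan is to invoke the Banach--Zaretsky characterization: a continuous function on a compact interval is absolutely continuous if and only if it is of bounded variation and satisfies Lusin's (N) property. Continuity of $\zeta$ is immediate. Bounded variation follows from the monotonicity of $\psi$: for any partition $0=t_0<\dots<t_n=T'$, the points $s_i\coloneqq\psi(t_i)$ form a nondecreasing sequence in $[0,T]$, so $\sum_i|\zeta(t_i)-\zeta(t_{i-1})|_2=\sum_i|\xi(s_i)-\xi(s_{i-1})|_2$ is bounded by the total variation of $\xi$, which is finite since $\xi$ is absolutely continuous. For Lusin (N), given $E\subseteq[0,T']$ with $|E|=0$, I would split $E=E_+\cup E_0$ where $E_+\subseteq\{\dot\psi>0\}$ and $E_0\subseteq\{\dot\psi=0\}$ up to a null set: on $E_+$, the set $\psi(E_+)$ has Lebesgue measure zero because $\psi$ itself is absolutely continuous (hence Lusin (N)), and then $\zeta(E_+)=\xi(\psi(E_+))$ has measure zero by Lusin (N) applied to $\xi$; the more delicate set $E_0$ is handled by covering $\psi(E_0)$ with intervals of arbitrarily small total length (possible because $\dot\psi=0$ a.e. on $E_0$ together with the absolute continuity of $\psi$), and then applying the Lusin (N) property of $\xi$ once more.

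For the chain rule, my plan is to combine the fundamental theorem of calculus for $\xi$ with a change of variables. Since $\xi$ is absolutely continuous, $\xi(b)-\xi(a)=\int_a^b\dot\xi(r)\,dr$ for all $a\le b$ in $[0,T]$. Substituting $r=\psi(s)$, which is permissible for a nondecreasing absolutely continuous $\psi$, yields
\begin{equation*}
    \zeta(t)-\zeta(0)=\xi(\psi(t))-\xi(\psi(0))=\int_0^t\dot\xi(\psi(s))\dot\psi(s)\,ds,\qquad t\in[0,T'].
\end{equation*}
Since $\zeta$ is absolutely continuous by the first step, the Lebesgue differentiation theorem applied to this identity gives $\dot\zeta(t)=\dot\xi(\psi(t))\dot\psi(t)$ for a.e.\ $t\in[0,T']$, with the convention that the right-hand side equals $0$ whenever $\dot\psi(t)=0$, regardless of whether $\xi$ is differentiable at $\psi(t)$.

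The main obstacle I anticipate is in the Lusin (N) verification on the set $E_0=E\cap\{\dot\psi=0\}$: the naive idea of pushing the Lebesgue measure forward through $\psi$ fails there, and one genuinely needs a covering argument that simultaneously leverages the absolute continuity of $\psi$ (to keep the total length of the image intervals small) and the Lusin (N) property of $\xi$ (to conclude that the further image under $\xi$ is still negligible). This interplay is precisely the content of the Serrin--Varberg theorem and is what distinguishes the monotonic case, where composition preserves absolute continuity, from the general case, where it does not.
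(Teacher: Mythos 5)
The paper offers no proof of this lemma: it is quoted verbatim from Serrin--Varberg (their Corollary 4), so there is nothing internal to compare against and your task is really to reconstruct that result. Your reconstruction is correct, but two remarks are in order. First, your Lusin (N) verification is more complicated than it needs to be: since $\psi$ and $\xi$ are each absolutely continuous, each maps null sets to null sets, so for any $E\subseteq[0,T']$ with $|E|=0$ you get $|\psi(E)|=0$ and then $|\xi_j(\psi(E))|=0$ for every component $\xi_j$ directly. The decomposition $E=E_+\cup E_0$ and the covering argument on $E_0$ buy you nothing, because $E_0\subseteq E$ is already a null set and the hypothesis $\dot\psi=0$ on it is irrelevant; the genuine Serrin--Varberg subtlety is not in property (N) but in the chain rule on the possibly fat set $\{t:\dot\psi(t)=0\}$, where $\xi$ may fail to be differentiable at $\psi(t)$, and you do handle that correctly with the convention that the product vanishes there. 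Second, two small points of hygiene: Banach--Zaretsky is a statement about real-valued functions, so it should be applied componentwise to $\zeta_j=\xi_j\circ\psi$ (the notion of a null image in $\Rds^N$ for $N>1$ is not the right condition); and when you invoke the change of variables $\int_{\psi(0)}^{\psi(t)}g(r)\,dr=\int_0^t g(\psi(s))\dot\psi(s)\,ds$ for $g=\dot\xi\in L^1$, make sure the version you cite is proved independently of the chain rule you are establishing (e.g., by monotone-class extension from indicators of intervals), since the two statements are essentially equivalent and several textbooks derive the former from the latter.
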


    \begin{lem}\label{repsigma}
        If the curve $\zeta:[0,T']\to\Gamma$ is a reparametrization of a curve $\xi:[0,T]\to\Gamma$, then
        \begin{equation*}
            \int_0^{T'}\sigma_a\left(\zeta,\dot\zeta\right)d\tau=\int_0^T\sigma_a\left(\xi,\dot\xi\right)d\tau,\qquad\text{for every }a\in\Rds.
        \end{equation*}
    \end{lem}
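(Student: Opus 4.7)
The strategy rests on two ingredients: the positive one-homogeneity of $\sigma_a$ in its momentum argument, and the standard change-of-variables rule for absolutely continuous monotone reparametrizations. Inspecting the three cases in the definition of $\sigma_a$, one sees that for each fixed $x\in\Gamma$ the map $q\mapsto\sigma_a(x,q)$ satisfies $\sigma_a(x,\lambda q)=\lambda\,\sigma_a(x,q)$ for every $\lambda\ge 0$: at interior points this is immediate from the linear dependence of the bracket on $q$, while at vertices the ``parallel to $q$'' condition in the min is invariant under positive scaling, and the case $\lambda=0$ agrees with the convention $\sigma_a(x,0)=0$.

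I would then invoke \cref{reparac} to write $\dot\zeta(t)=\dot\psi(t)\,\dot\xi(\psi(t))$ for a.e.\ $t\in[0,T']$. Since $\psi$ is nondecreasing, $\dot\psi(t)\ge 0$, and the homogeneity just noted yields the pointwise identity
\begin{equation*}
    \sigma_a(\zeta(t),\dot\zeta(t))\;=\;\dot\psi(t)\,\sigma_a\bigl(\xi(\psi(t)),\,\dot\xi(\psi(t))\bigr)
\end{equation*}
for a.e.\ $t\in[0,T']$; on the set $\{\dot\psi=0\}$ one also has $\dot\zeta=0$, and both sides vanish. Integrating in $t$ and then applying the standard change-of-variables formula for absolutely continuous nondecreasing surjections,
\begin{equation*}
    \int_0^{T'} g(\psi(t))\,\dot\psi(t)\,dt\;=\;\int_0^{T} g(s)\,ds,
\end{equation*}
with $g(s)=\sigma_a(\xi(s),\dot\xi(s))$, converts the right-hand side into $\int_0^T\sigma_a(\xi,\dot\xi)\,ds$, which is the claimed equality.

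The main obstacle is purely technical: $\sigma_a$ is extended-real-valued and may equal $-\infty$ at interior points of an arc whose local level set $\{\mu:H_\gamma(s,\mu)=a\}$ is empty, which opens the door to an indeterminate $0\cdot(-\infty)$ on the null set $\{\dot\psi=0\}$ at such points. In every use of the lemma in this paper we operate in the regime $a\ge c\ge a_0$, where all local level sets are nonempty and $\sigma_a$ is finite along the curves of interest, so this subtlety never bites; otherwise the two integrals simultaneously degenerate to $-\infty$ on the same set of positive measure, and the identity still holds in the extended sense.
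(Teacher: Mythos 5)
Your proposal is correct and follows essentially the same route as the paper's own proof: positive one-homogeneity of $\sigma_a$ in the momentum variable, the chain rule from \cref{reparac}, and the change of variables $r=\psi(\tau)$. The extra care you take with the null set $\{\dot\psi=0\}$ and the $-\infty$ convention is a reasonable refinement of the same argument rather than a different approach.
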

    \begin{proof}
        It follows from the definition that $(x,q)\mapsto\sigma_a(x,q)$ is positively homogeneous on $q$, thus, if we let $\psi$ be the nondecreasing absolutely continuous function such that $\zeta\equiv\xi\circ\psi$ and consider the change of variable $r=\psi(\tau)$, we get from \cref{reparac} that, for every $a\in\Rds$,
        \begin{equation*}
            \int_0^{T'}\sigma_a\left(\zeta,\dot\zeta\right)d\tau=\int_0^{T'}\sigma_a\left(\xi\circ\psi,\dot\xi\circ\psi\right)\cdot\dot\psi(\tau)d\tau=\int_0^T\sigma_a\left(\xi,\dot\xi\right)dr.
        \end{equation*}
    \end{proof}

    The next \namecref{repconstspeed} comes from classical results of analysis in metric space, see~\cite{Bernal18} and \cite[Lemma 3.11]{Davini07}.

    \begin{prop}\label{repconstspeed}
        Any curve in $[0,T]$ is the reparametrization of a curve $\xi$ with constant speed, namely with $|\dot\xi|_2\equiv\mathrm{constant}$ a.e., defined on a bounded interval.
    \end{prop}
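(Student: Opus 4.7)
The plan is to build $\xi$ as the arc-length reparametrization. Given a curve $\eta:[0,T]\to\Rds^N$, define the length function
\begin{equation*}
    L(t)\coloneqq\int_0^t\left|\dot\eta(\tau)\right|_2 d\tau,\qquad t\in[0,T],
\end{equation*}
and set $\ell\coloneqq L(T)$. Then $L$ is nondecreasing and absolutely continuous, and maps $[0,T]$ onto $[0,\ell]$. If $\ell=0$, then $\eta$ is constant, and one can take $\xi$ to be any constant curve defined on a bounded interval with $\psi(t)=t/T$ rescaled appropriately; then $\eta=\xi\circ\psi$ and $\xi$ has (zero) constant speed. Henceforth assume $\ell>0$.

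The first substantive step is to verify that $\xi(s)\coloneqq\eta(t)$ is well-defined for any $t\in L^{-1}(s)$ and $s\in[0,\ell]$. Indeed, if $L(t_1)=L(t_2)$ with $t_1<t_2$, then $\left|\dot\eta\right|_2=0$ a.e. on $[t_1,t_2]$, whence $\eta$ is constant there. Next, for $s_1<s_2$ in $[0,\ell]$, choose any preimages $t_1,t_2$ under $L$: we have
\begin{equation*}
    \left|\xi(s_2)-\xi(s_1)\right|_2=\left|\eta(t_2)-\eta(t_1)\right|_2\le\int_{t_1}^{t_2}\left|\dot\eta\right|_2 d\tau=s_2-s_1,
\end{equation*}
so $\xi$ is $1$--Lipschitz continuous, hence absolutely continuous, on $[0,\ell]$. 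By construction $\eta=\xi\circ L$, which realizes $\eta$ as a reparametrization of $\xi$ in the sense of the definition preceding \cref{reparac}.

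It remains to prove that $\left|\dot\xi\right|_2\equiv 1$ a.e.\ on $[0,\ell]$. Applying \cref{reparac} to the identity $\eta=\xi\circ L$ yields
\begin{equation*}
    \dot\eta(t)=\dot\xi(L(t))\,\dot L(t)=\dot\xi(L(t))\left|\dot\eta(t)\right|_2\qquad\text{for a.e. }t\in[0,T].
\end{equation*}
Taking Euclidean norms gives $\left|\dot\xi(L(t))\right|_2=1$ at every Lebesgue point $t$ with $\left|\dot\eta(t)\right|_2\ne 0$. The set $N\coloneqq\{t\in[0,T]:\left|\dot\eta(t)\right|_2=0\}$ is exactly where $\dot L=0$, so by the standard change-of-variables formula $L(N)$ has Lebesgue measure zero in $[0,\ell]$. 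Therefore $\left|\dot\xi\right|_2=1$ a.e.\ in $[0,\ell]$, which is the desired constant-speed property.

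The genuinely delicate point above is the last one: moving a.e.\ information from the $t$--variable to the $s$--variable requires that $L$ not create a set of positive measure out of $N$. This is precisely the content of Zarecki's criterion, and the argument can be extracted directly from \cite{Bernal18} and \cite[Lemma~3.11]{Davini07}, which is why the proposition is stated as a consequence of those classical results.
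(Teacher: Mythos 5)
Your argument is correct and is precisely the classical arc-length reparametrization that the paper itself does not reproduce but delegates to the cited references (\cite{Bernal18} and \cite[Lemma 3.11]{Davini07}); you also correctly isolate the one delicate step, namely that the length function maps its zero-derivative set to a Lebesgue-null set, which follows from $|L(E)|\le\int_E\dot L\,d\tau$ for nondecreasing absolutely continuous $L$. No gaps.
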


    \begin{defin}\label{lagpardef}
        Given a curve $\xi:[0,T]\to\Gamma$ and an $a\in\Rds$ we say that $\xi$ has an \emph{$a$--Lagrangian parametrization} if
        \begin{equation*}
            L\left(\xi(t),\dot\xi(t)\right)+a=\sigma_a\left(\xi(t),\dot\xi(t)\right),\qquad\text{for a.e. }t\in[0,T].
        \end{equation*}
        We will also say that $\zeta$ is an $a$--Lagrangian reparametrization of $\xi$ if $\zeta$ has an $a$--Lagrangian parametrization and it is a reparametrization of $\xi$.
    \end{defin}

    \begin{prop}\label{lagparreg}
        If $\xi$ has an $a$--Lagrangian parametrization there is a minimal constant $\kappa_a$, depending only on $a$, such that $\xi$ is $\kappa_a$--Lipschitz continuous. Furthermore, if $a<b$, then $\kappa_a<\kappa_b$.
    \end{prop}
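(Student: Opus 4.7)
The plan is to combine Fenchel--Young duality with the uniform superlinearity of the Hamiltonians, which translates the $a$--Lagrangian identity into a pointwise constraint on $\dot\xi$ of exactly the form that can be bounded using~\ref{condsuplin}.

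First I would unpack the $a$--Lagrangian condition pointwise. On the set of full measure where $\xi(t)=\gamma(s)$ with $s\in(0,1)$ and $\dot\xi(t)=\lambda\dot\gamma(s)$, the identity $L(\xi,\dot\xi)+a=\sigma_a(\xi,\dot\xi)$ reduces to
\begin{equation*}
    L_\gamma(s,\lambda)+a=\sigma_{\gamma,a}^+(s)\,\lambda\quad\text{if }\lambda\ge0,\qquad L_\gamma(s,\lambda)+a=\sigma_{\gamma,a}^-(s)\,\lambda\quad\text{if }\lambda\le0.
\end{equation*}
Since $H_\gamma(s,\sigma_{\gamma,a}^\pm(s))=a$, each of these is the equality case in Fenchel--Young, equivalent to $\lambda\in\partial_\mu H_\gamma(s,\sigma_{\gamma,a}^+(s))$ or $\lambda\in\partial_\mu H_\gamma(s,\sigma_{\gamma,a}^-(s))$ respectively. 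By the second part of \cref{sigcont} the maps $s\mapsto\sigma_{\gamma,a}^\pm(s)$ are continuous on $[0,1]$ and hence take values in a compact set $K_{\gamma,a}$; convexity of $H_\gamma(s,\cdot)$ together with~\ref{condsuplin} makes $\partial_\mu H_\gamma(s,\mu)$ uniformly bounded for $(s,\mu)\in[0,1]\times K_{\gamma,a}$. Maximizing over the finitely many arcs and multiplying by $M_\Gamma\coloneqq\max_{\gamma,s}|\dot\gamma(s)|_2<\infty$ yields a Lipschitz bound on $\xi$ depending only on $a$; taking the infimum over admissible constants defines the minimal $\kappa_a$.

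For the strict monotonicity $\kappa_a<\kappa_b$, I would first upgrade the first part of \cref{sigcont} using~\cref{eq:sublevelH} to obtain strict monotonicity of $a\mapsto\sigma_{\gamma,a}^+(s)$ and strict decrease of $a\mapsto\sigma_{\gamma,a}^-(s)$ at every $(\gamma,s)$. Monotonicity of the subdifferential of the convex map $\mu\mapsto H_\gamma(s,\mu)$ then yields $\max\partial_\mu H_\gamma(s,\sigma_{\gamma,a}^+(s))\le\min\partial_\mu H_\gamma(s,\sigma_{\gamma,b}^+(s))$ pointwise, which already gives the non-strict inequality $\kappa_a\le\kappa_b$. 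To upgrade to strict, I would attain the sup in the definition of $\kappa_a/M_\Gamma$ at some $(\gamma^*,s^*)$ by compactness and use the strict gap $\sigma_{\gamma^*,b}^+(s^*)-\sigma_{\gamma^*,a}^+(s^*)>0$ together with the Fenchel--Young identity at level $b$ to exhibit a strictly larger admissible $|\lambda|$ at level $b$.

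The main obstacle is the strict inequality in this final step: if $H_{\gamma^*}(s^*,\cdot)$ happened to be affine on $[\sigma_{\gamma^*,a}^+(s^*),\sigma_{\gamma^*,b}^+(s^*)]$, the two subdifferentials at its endpoints would share the same maximum slope, and the strict gain would have to be extracted either from a corner of $H_{\gamma^*}(s^*,\cdot)$ at $\sigma_{\gamma^*,b}^+(s^*)$ or by varying $(\gamma,s)$ using the continuity provided by \cref{sigcont}. Handling this degenerate scenario carefully, by combining~\cref{eq:sublevelH} with the superlinearity~\ref{condsuplin} to rule out a globally affine configuration consistent with the sup being attained, is the only delicate part; the rest is routine compactness and continuity.
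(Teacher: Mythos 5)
For the Lipschitz bound your argument is the paper's: the $a$--Lagrangian identity is the equality case of Fenchel--Young, so a.e.\ the scalar velocity lies in $\partial_\mu H_\gamma(s,\mu)$ for a $\mu$ on the level set $\{H_\gamma(s,\cdot)=a\}$; that level set is bounded uniformly in $s$ and $\gamma$ by \labelcref{condsuplin}, the generalized gradient of the convex map $\mu\mapsto H_\gamma(s,\mu)$ is uniformly bounded on bounded sets, and one concludes after multiplying by $\max|\dot\gamma|_2$ and reducing to single arcs via \cref{curveascomp}. (The paper bounds $|\mu(t)|\le M$ directly from coercivity rather than through the continuity of $s\mapsto\sigma_{\gamma,a}^\pm(s)$; your route via \cref{sigcont} needs the caveat that $\sigma_{\gamma,a}^\pm(s)$ may be infinite at some $s$ when $a<a_\gamma$, but the bound is only needed at the points actually visited by $\eta$, so this is a cosmetic adjustment.)

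The genuine issue is the strict inequality $\kappa_a<\kappa_b$, and you have located exactly where it lives. The paper disposes of it in one clause, asserting that \labelcref{condqconv} implies it, with no further argument. Your worry about an affine plateau is correct, but your proposed repair cannot work: superlinearity \labelcref{condsuplin} only forbids $H_{\gamma}(s^*,\cdot)$ from being affine on an \emph{unbounded} interval, whereas the dangerous configuration is affinity on a bounded interval containing $\left[\sigma_{\gamma,a}^+(s^*),\sigma_{\gamma,b}^+(s^*)\right]$ in its interior. Such a Hamiltonian --- e.g.\ $\mu\mapsto\max\left\{|\mu|,\,2|\mu|-1,\,\mu^2-10\right\}$ with $0<a<b<1$ --- satisfies \labelcref{condcont,condsuplin,condconv,condqconv} (strict quasiconvexity, equivalently \cref{eq:sublevelH}, only excludes flat pieces of the graph, not increasing affine ones) and gives $\partial_\mu H\left(\sigma_{\gamma,a}^+\right)=\{1\}=\partial_\mu H\left(\sigma_{\gamma,b}^+\right)$, hence $\kappa_a=\kappa_b$. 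So the degenerate scenario you flag is not ruled out by the stated hypotheses; what is provable, and what is actually used later in \cref{lipcur,approxcurve}, is only the non-strict monotonicity $\kappa_a\le\kappa_b$ coming from the monotonicity of the subdifferential, which your argument does establish. As written, your plan stalls at precisely the step you call delicate, and closing it would require strengthening \labelcref{condqconv} to strict convexity in $\mu$.
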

    \begin{proof}
        We start assuming that there exist an arc $\gamma$ and a curve $\eta:[0,T]\to[0,1]$ such that $\xi=\gamma\circ\eta$. We have that for a.e. $t\in[0,T]$
        \begin{equation}\label{eq:lagparreg1}
            L_\gamma(\eta(t),\dot\eta(t))=\mu(t)\dot\eta(t)-H_\gamma(\eta(t),\mu(t))\quad\text{or}\quad\dot\eta(t)=0,
        \end{equation}
        where $\mu(t)$ satisfies $H_\gamma(\eta(t),\mu(t))=a$. It follows that, for all $t$ satisfying~\cref{eq:lagparreg1}, $\dot\eta(t)\in\partial_\mu H_\gamma(\eta(t),\mu(t))$ and, by the coercivity of $H_\gamma$ in $\mu$, $|\mu(t)|\le M$ for some $M>0$. Since $H_\gamma(s,\mu)$ is locally Lipschitz continuous in $\mu$ uniformly with respect to $s$ and $\gamma$, see \cite[Corollary to Proposition 2.2.6]{Clarke90}, we find a constant $C_M$ with $|\dot\eta|\le C_M$ a.e.. This yields the existence of a minimal constant $\kappa_a$, depending only on $a$, such that $|\dot\xi|_2\le\kappa_a$ a.e.. Moreover, if $a<b$, \labelcref{condqconv} implies that $\kappa_a<\kappa_b$. Finally our claim is a consequence of \cref{curveascomp}.
    \end{proof}

    The next \namecref{laglbound}, together with \cref{repsigma}, shows that, given an upper bound for the flux limiter, Lagrangian reparametrizations are, in a certain sense, optimal among all the possible reparametrizations.

    \begin{lem}\label{laglbound}
        Assume that $c_x\le a$ for all $x\in\Vbf$, then
        \begin{equation}\label{eq:laglbound.1}
            L(x,q)+a\ge\sigma_a(x,q),\qquad\text{for any }(x,q)\in T\Gamma.
        \end{equation}
    \end{lem}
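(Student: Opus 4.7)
The plan is to split into the three cases that define $L$ and $\sigma_a$ on $T\Gamma$, and to use the Fenchel--Young type inequality coming from the duality between $L_\gamma$ and $H_\gamma$ in the first two cases, reserving the hypothesis $c_x\le a$ for the vertex case with $q=0$.

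First I would treat the case $x=\gamma(s)$ with $s\in(0,1)$. Setting $\lambda\coloneqq q\cdot\dot\gamma(s)/|\dot\gamma(s)|_2^2$, either $\{\mu:H_\gamma(s,\mu)=a\}$ is empty, in which case $\sigma_a(x,q)=-\infty$ and the inequality is vacuous, or it is nonempty, and then for any $\mu$ in this set the definition of $L_\gamma$ as the Legendre--Fenchel conjugate of $H_\gamma$ gives
\begin{equation*}
L_\gamma(s,\lambda)\ge\lambda\mu-H_\gamma(s,\mu)=\lambda\mu-a.
\end{equation*}
Applying this successively with $\mu=\sigma_{\gamma,a}^+(s)$ and $\mu=\sigma_{\gamma,a}^-(s)$ and taking the maximum yields $L(x,q)+a\ge\sigma_a(x,q)$ in view of the expression for $\sigma_a(x,q)$ recalled in Section~\ref{HJsec}.

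Next I would handle the case $x\in\Vbf$, $q\ne0$. For every $\gamma\in\Gamma_x$ whose tangent $\dot\gamma(1)$ is parallel to $q$, set $\lambda_\gamma\coloneqq q\cdot\dot\gamma(1)/|\dot\gamma(1)|_2^2$. The same Fenchel--Young argument, applied at $s=1$ to each such $\gamma$, gives
\begin{equation*}
L_\gamma(1,\lambda_\gamma)+a\ge\max\{\mu\lambda_\gamma:H_\gamma(1,\mu)=a\},
\end{equation*}
with the convention that both sides are compared against $-\infty$ when the right-hand set is empty. Taking the minimum over those $\gamma\in\Gamma_x$ with $\dot\gamma(1)$ parallel to $q$ on both sides reproduces the definitions of $L(x,q)$ and $\sigma_a(x,q)$, hence the inequality.

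Finally, for $x\in\Vbf$ and $q=0$, the definitions give $L(x,0)=-c_x$ and $\sigma_a(x,0)=0$, so \cref{eq:laglbound.1} reduces to $a-c_x\ge0$, which is exactly the hypothesis $c_x\le a$. This is the only step where the flux limiter assumption enters, and it is the expected conceptual role of the hypothesis: in the interior of the arcs and along nonzero tangent directions at vertices the inequality is purely a consequence of convex duality, while at the vertices in the ``stationary direction'' $q=0$ one needs the flux limiter to be compatible with the level $a$. No obstacle is anticipated.
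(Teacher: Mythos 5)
Your proof is correct and follows essentially the same route as the paper: the Fenchel--Young inequality $L_\gamma(s,\lambda)\ge\lambda\mu-a$ for $\mu$ in the level set $\{H_\gamma(s,\cdot)=a\}$ handles every $(x,q)$ with $q\ne0$, and the hypothesis $c_x\le a$ is used only for $x\in\Vbf$, $q=0$, where the inequality reads $-c_x+a\ge0$. The paper merely condenses your first two cases into one by selecting the arc realizing the minimum in the definition of $L(x,q)$, which is an immaterial difference.
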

    \begin{proof}
        If $x\in\Vbf$ we have from our assumptions that
        \begin{equation}\label{eq:laglbound1}
            L(x,0)=-c_x\ge-a=\sigma_a(x,0)-a.
        \end{equation}
        Next we let $(x,q)\in T\Gamma$ with $q\ne0$, it then follows that there is an arc $\gamma$ such that, putting for notation's sake $s\coloneqq\gamma^{-1}(x)$ and $\lambda\coloneqq\dfrac{\dot\gamma(s)\cdot q}{|\dot\gamma(s)|_2^2}$,
        \begin{equation}\label{eq:laglbound2}
            L(x,q)=L_\gamma(s,\lambda)\ge\max\{\mu\lambda-a:\mu\in\Rds,\,H_\gamma(s,\mu)=a\}\ge\sigma_a(x,q)-a.
        \end{equation}
        Finally~\cref{eq:laglbound1,eq:laglbound2} yields~\cref{eq:laglbound.1}.
    \end{proof}

    \begin{defin}\label{admisdef}
        Given a curve $\xi:[0,T]\to\Gamma$, we set
        \begin{equation*}
            a_\xi\coloneqq-\min_{t\in[0,T]}L(\xi(t),0),
        \end{equation*}
        then we say that $a$ is \emph{admissible} for $\xi$ if $a>a_\xi$. Trivially, if $a>a_0$, it is admissible for any curve on $\Gamma$.
    \end{defin}

    The concept of admissibility is strongly related to Lagrangian reparametrizations, as shown by the next \namecref{timeoflagpar}.

    \begin{theo}\label{timeoflagpar}
        Let $\xi:[0,T]\to\Gamma$ be a curve with a.e.\ non-vanishing derivative, $a_\xi$ be as in \cref{admisdef} and define for each $a\ge a_\xi$ the set
        \begin{equation*}
            T(a)\coloneqq\left\{t>0:\text{$\xi$ has an $a$--Lagrangian reparametrization $\zeta:[0,t]\to\Gamma$}\right\}.
        \end{equation*}
        The following facts hold:
        \begin{myenum}
            \item if $a$ is admissible for $\xi$ then $T(a)$ is a compact interval, namely,
            \begin{equation*}
                T(a)=\left[\underline T(a),\ovT(a)\right],\qquad\text{for some }\ovT(a)\ge\underline T(a)>0;
            \end{equation*}
            \item if $a$ and $b$ are both admissible for $\xi$ and $b>a$, then $\ovT(b)\le\underline T(a)$;
            \item $\lim\limits_{a\to\infty}\ovT(a)=0$ and, for any admissible $a$,
            \begin{equation*}
                \underline T(a)=\lim_{b\to a^+}\ovT(b),\qquad\ovT(a)=\lim_{b\to a^-}\underline T(b);
            \end{equation*}
            \item if $\underline T(a_\xi)\coloneqq\lim\limits_{a\to a_\xi^+}\ovT(a)$ is finite, then
            \begin{equation*}
                T(a_\xi)=[\underline T(a_\xi),\infty).
            \end{equation*}
        \end{myenum}
        In particular, for any $t\in(0,\infty)$, there exists an $a\ge a_\xi$ such that $\xi$ has an $a$--Lagrangian reparametrization $\zeta:[0,t]\to\Gamma$.
    \end{theo}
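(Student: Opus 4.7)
The plan is to reduce the analysis to curves supported on a single arc, where the $a$--Lagrangian condition pins down the admissible pointwise speeds, and then reassemble via the arc decomposition of \cref{curveascomp}.

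First I would fix the decomposition $[0,T]=\bigcup_i\ovI_i$ of \cref{curveascomp}, writing each piece as $\xi|_{\ovI_i}=\gamma_i\circ\eta_i$ with $\eta_i:\ovI_i\to[0,1]$ absolutely continuous. A reparametrization $\zeta:[0,t]\to\Gamma$ of $\xi$ is $a$--Lagrangian if and only if each of its arc pieces is $a$--Lagrangian; thus, modulo concatenation and the (trivial) observation that the empty/zero--length pieces on which $\dot\xi$ vanishes don't contribute, the set $T(a)$ for $\xi$ is the (finite) Minkowski sum of the sets $T_i(a)$ attached to each piece. Since finite Minkowski sums of compact intervals are compact intervals, it suffices to prove \textbf{(i)}--\textbf{(iv)} for each $\xi=\gamma\circ\eta$ with $\eta$ moving monotonically on $[0,1]$ (say increasing, otherwise invert $\gamma$ using~\cref{eq:sigcomp}), and then sum the resulting intervals and limits.

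Next, on a single arc I would characterize the admissible speeds via Fenchel--Young. For $q=\lambda\dot\gamma(s)$ with $\lambda>0$, the identity $L(x,q)+a=\sigma_a(x,q)$ is equivalent to $\lambda\in\partial_\mu H_\gamma(s,\sigma_{\gamma,a}^+(s))$; write this subgradient as the closed interval $[\lambda_a^-(s),\lambda_a^+(s)]$. By \labelcref{condqconv} and~\cref{eq:sublevelH}, as long as $a>\min_\mu H_\gamma(s,\mu)$ one has $\lambda_a^-(s)>0$; and admissibility of $a$ for $\xi$ forces this strict positivity on the whole support of $\xi$. Hence any $a$--Lagrangian reparametrization $\tilde\eta$ of $\eta$ satisfies $\dot{\tilde\eta}(t)\in[\lambda_a^-(\tilde\eta(t)),\lambda_a^+(\tilde\eta(t))]$ a.e., and by the change of variables from \cref{reparac,repsigma} its total duration equals $\int_0^1 ds/\dot{\tilde\eta}$ (inverting $\tilde\eta$). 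The extremal choices $\dot{\tilde\eta}=\lambda_a^+$ and $\dot{\tilde\eta}=\lambda_a^-$ yield, respectively,
\[
    \underline T(a)=\int_0^1\frac{ds}{\lambda_a^+(s)},\qquad\ovT(a)=\int_0^1\frac{ds}{\lambda_a^-(s)},
\]
both finite under admissibility, and a measurable selection/convex combination argument fills in every intermediate duration — this gives \textbf{(i)}.

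For \textbf{(ii)}, I would combine convexity of $H_\gamma(s,\cdot)$ with strict quasiconvexity: \cref{en:sigcont1} gives $\sigma_{\gamma,b}^+(s)>\sigma_{\gamma,a}^+(s)$ when $b>a$, and monotonicity of subdifferentials of convex functions then yields $\lambda_b^-(s)\ge\lambda_a^+(s)$; plugging into the integral formulas for $\underline T$ and $\ovT$ gives $\ovT(b)\le\underline T(a)$. For \textbf{(iii)}, the continuity in $a$ of $\sigma_{\gamma,a}^\pm(s)$ (by \cref{sigcont}) together with the outer semicontinuity of the subdifferential of a convex function lets me pass to the limit $b\to a^\pm$ inside the integrals via dominated convergence (using the a priori bound $\kappa_a$ from \cref{lagparreg} as a uniform $L^\infty$ majorant for $b$ in a small interval, and an integrable minorant coming from the strict positivity of $\lambda_a^-$ on the support); the fact that $\ovT(a)\to0$ as $a\to\infty$ follows from $\lambda_a^-\to\infty$ uniformly, again via \ref{condsuplin}. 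Finally \textbf{(iv)} is precisely the degenerate case in which $\lambda_a^-$ is allowed to vanish at some $s$ as $a\downarrow a_\xi$, so that $\ovT(a)\nearrow+\infty$ while $\underline T(a_\xi)$ remains finite; the interval $[\underline T(a_\xi),\infty)$ is then realized by concatenating an $a_\xi$--Lagrangian reparametrization at the minimal duration with an arbitrarily long stall at a point $x$ with $L(x,0)+a_\xi=0$, which is exactly what admissibility at the boundary permits.

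The main obstacle I anticipate is the sharp control of the subdifferential endpoints $\lambda_a^\pm(s)$ near the boundary $a=a_\xi$ and at points where $\sigma_{\gamma,a}^+$ and $\sigma_{\gamma,a}^-$ coalesce (cf.~\cref{eq:ceqa0sig}); one must show measurability, local integrability of $1/\lambda_a^\pm$ under strict admissibility, and upper/lower semicontinuity of the integrals in $a$ without losing the strict inequality $\ovT(b)\le\underline T(a)$. Once this convex--analytic bookkeeping is in place, the arc--by--arc summation gives the full statement.
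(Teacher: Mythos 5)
Your treatment of the single-arc case is, in substance, a correct direct proof of the result the paper simply imports: the published proof disposes of $\xi=\gamma\circ\eta$ by citing Davini (Proposition 3.13 and Remark 3.17) applied to $\eta$ and $L_\gamma$, whereas you reconstruct that argument via Fenchel--Young duality, identifying the admissible speeds with the subgradient interval $\left[\lambda_a^-(s),\lambda_a^+(s)\right]$ of $H_\gamma(s,\cdot)$ at $\sigma_{\gamma,a}^+(s)$ and reading off $\underline T(a)=\int_0^1 ds/\lambda_a^+$ and $\ovT(a)=\int_0^1 ds/\lambda_a^-$. Granting the measurability and integrability checks you flag, items (ii)--(iv) do follow from these formulas as you describe, so on a single monotone arc piece your route is fine and arguably more self-contained than the paper's.

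The genuine gap is in the reassembly. The decomposition of \cref{curveascomp} is only \emph{at most countable}, not finite --- a curve on $\Gamma$ may switch arcs infinitely often --- and within one arc $\eta_i$ need not be monotone, so splitting further into monotone pieces again yields a possibly infinite family. Your phrase ``the (finite) Minkowski sum of the sets $T_i(a)$'' therefore assumes what must be proved: an infinite Minkowski sum of compact intervals $\left[\underline T_i(a),\ovT_i(a)\right]$ is a compact interval only if $\sum_i\ovT_i(a)<\infty$; without that bound item (i) fails (the set would be unbounded above) and the interchange of limit and sum needed for item (iii) is unjustified. This is exactly the point the paper's proof isolates, extracting from the single-arc case a constant $C_a$, uniform over the finitely many arcs, with $\ovT_i(a)\le C_aT_i$ and hence $\sum_i\ovT_i(a)\le C_aT$. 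Your own machinery can supply the missing estimate --- under strict admissibility $\lambda_a^-(s)$ is bounded below by a positive constant $\delta_a$ uniformly in $s$ and in the arc, by convexity and coercivity, so $\ovT_i(a)\le\delta_a^{-1}\cdot(\text{total variation of }\eta_i)$ and these variations are summable since $\sum_i\int_{\ovI_i}|\dot\eta_i|\,d\tau$ is controlled by $\int_0^T|\dot\xi|_2\,d\tau$ via \cref{invarccomp} --- but as written the step is absent. A smaller omission: for the piecewise durations to add up to the total duration you should note that for admissible $a$ a reparametrization cannot stall at a vertex, since there $L(x,0)+a=a-c_x\ge a-a_\xi>0\ne\sigma_a(x,0)$; this is the one place where the network structure, rather than pure single-arc convex analysis, actually enters.
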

    \begin{proof}
        If there exist an arc $\gamma$ and a curve $\eta:[0,T]\to[0,1]$ such that $\xi=\gamma\circ\eta$, then our claim is a consequence of \cite[Proposition 3.13 and Remark 3.17]{Davini07} applied to the curve $\eta$ and the Lagrangian $L_\gamma$. It is also shown there that for any admissible $a$ there is a $C_a>0$, independent of $\eta$, such that $\ovT(a)\le C_a T$. Since the arcs are finite $C_a$ can be chosen to be independent of the arc $\gamma$.\\
        In the general case we have by \cref{curveascomp} that there is an at most countable collection of open disjoint intervals $\{I_i\}$ with $\bigcup_i\ovI_i=[0,T]$ such that
        \begin{equation*}
            \xi\left(\ovI_i\right)\subseteq\gamma_i([0,1]),\qquad\text{for each index }i,
        \end{equation*}
        where $\gamma_i$ is an arc of the network. Setting $\eta_i\coloneqq\gamma^{-1}_i\circ\xi|_{\ovI_i}$ for every index $i$ we can assume that up to a translation $\eta_i$ is defined on an interval $[0,T_i]$. Since $\xi$ has non-vanishing derivative we get
        \begin{equation*}
            \int_0^T L\left(\xi,\dot\xi\right)d\tau=\sum_i\int_0^{T_i}L_{\gamma_i}(\eta_i,\dot\eta_i)d\tau.
        \end{equation*}
        If we define $\xi_i\coloneqq\gamma_i\circ\eta_i$ for each index $i$, we have by the previous step that our claim is true for each $\xi_i$. Moreover, setting
        \begin{equation*}
            T_i(a)\coloneqq\left\{t>0:\text{$\xi_i$ has an $a$--Lagrangian reparametrization $\zeta:[0,t]\to\Gamma$}\right\},
        \end{equation*}
        we have that whenever $a$ is admissible for $\xi$ it is admissible for each $\xi_i$ and
        \begin{equation*}
            T_i(a)=\left[\underline T_i(a),\ovT_i(a)\right].
        \end{equation*}
        Then our claim will follows if $\sum_i\ovT_i(a)$ is finite for any admissible $a$. To prove this we observe that each $\xi$ is defined on an interval $[0,T_i]$ with $\sum_i T_i=T$ and that by the previous step there is, for any admissible $a$, a constant $C_a$ such that $\ovT_i(a)\le C_a T_i$, which implies that $\sum_i\ovT_i(a)\le C_a T$.
    \end{proof}

    \begin{cor}\label{approxlagpar}
        Let $\xi:[0,T]\to\Gamma$ be a curve with a.e.\ non-vanishing derivative and define for each $t>0$
        \begin{equation*}
            [\xi]_t\coloneqq\{\zeta:[0,t]\to\Gamma:\text{$\zeta$ is a reparametrization of $\xi$}\}.
        \end{equation*}
        Then, if~\cref{eq:fluxcrit} holds,
        \begin{equation*}
            \int_0^T\sigma_c\left(\xi,\dot\xi\right)d\tau=\inf\left\{\int_0^t\left(L\left(\zeta,\dot\zeta\right)+c\right)d\tau:\zeta\in[\xi]_t,t>0\right\}.
        \end{equation*}
    \end{cor}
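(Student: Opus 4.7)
The plan is to prove the equality by establishing both inequalities separately. The ``$\ge$'' direction is a direct consequence of \cref{laglbound,repsigma}: the flux limiter assumption~\cref{eq:fluxcrit} allows me to apply \cref{laglbound} with $a=c$, giving $L(\zeta,\dot\zeta)+c\ge\sigma_c(\zeta,\dot\zeta)$ almost everywhere for any $\zeta\in[\xi]_t$; integrating and then using \cref{repsigma} yields
\begin{equation*}
\int_0^t\left(L(\zeta,\dot\zeta)+c\right)d\tau\ge\int_0^t\sigma_c\left(\zeta,\dot\zeta\right)d\tau=\int_0^T\sigma_c\left(\xi,\dot\xi\right)d\tau.
\end{equation*}

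For the reverse inequality I would construct a family of reparametrizations whose energies approach the target in the limit. I first observe that $a_\xi\le c$: at any vertex $x=\xi(t)$ one has $-L(x,0)=c_x\le c$ by~\cref{eq:fluxcrit}, while at interior points $\gamma(s)$ with $s\in(0,1)$ one has $-L(\gamma(s),0)=\min_\mu H_\gamma(s,\mu)\le a_\gamma\le a_0\le c$. Consequently every $a>c$ is admissible for $\xi$, and \cref{timeoflagpar} furnishes an $a$-Lagrangian reparametrization $\zeta_a:[0,t_a]\to\Gamma$ of $\xi$. Using the defining identity $L(\zeta_a,\dot\zeta_a)+a=\sigma_a(\zeta_a,\dot\zeta_a)$ together with \cref{repsigma}, I obtain
\begin{equation*}
\int_0^{t_a}\left(L(\zeta_a,\dot\zeta_a)+c\right)d\tau=\int_0^{t_a}\sigma_a\left(\zeta_a,\dot\zeta_a\right)d\tau-(a-c)t_a=\int_0^T\sigma_a\left(\xi,\dot\xi\right)d\tau-(a-c)t_a.
\end{equation*}

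To conclude I let $a\to c^+$. Since $\sigma_a$ is positively $1$-homogeneous in the momentum and the functions $\sigma_{\gamma,a}^\pm$ are uniformly bounded on $[0,1]$ for $a$ in a compact interval above $c$, the integrand $\sigma_a(\xi,\dot\xi)$ is dominated by a constant multiple of $|\dot\xi|_2\in L^1([0,T])$; then \cref{sigcont} and dominated convergence give $\int_0^T\sigma_a(\xi,\dot\xi)\,d\tau\to\int_0^T\sigma_c(\xi,\dot\xi)\,d\tau$. The main subtlety — and what I expect to be the principal obstacle — is that there is no a priori control on $t_a$ as $a\to c^+$, since when $c=a_\xi$ the quantity $\underline T(a)$ may diverge according to \cref{timeoflagpar}. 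This is circumvented by the lower bound already proved: applied to $\zeta_a$ it forces
\begin{equation*}
0\le(a-c)t_a\le\int_0^T\left(\sigma_a\left(\xi,\dot\xi\right)-\sigma_c\left(\xi,\dot\xi\right)\right)d\tau\longrightarrow0,
\end{equation*}
so $(a-c)t_a\to0$ automatically, and the right-hand side of the previous display converges to $\int_0^T\sigma_c(\xi,\dot\xi)\,d\tau$, proving that the infimum is at most the target.
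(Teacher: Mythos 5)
Your proof is correct and follows essentially the same route as the paper: the lower bound via \cref{laglbound,repsigma}, and the upper bound by taking $a$-Lagrangian reparametrizations for $a>c$ (admissible since $a_\xi\le c$) and letting $a\to c^+$ with \cref{timeoflagpar,repsigma,sigcont}. The only difference is that the paper sidesteps your ``principal obstacle'' entirely by estimating $L+c\le L+a$ and simply discarding the nonnegative term $(a-c)t_a$, rather than showing it vanishes.
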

    \begin{proof}
        We let $\{a_n\}_{n\in\Nds}$ be a decreasing sequence converging to $c$, then
        \begin{equation}\label{eq:approxlagpar1}
            \lim_{n\to\infty}\int_0^T\sigma_{a_n}\left(\xi,\dot\xi\right)d\tau=\int_0^T\sigma_c\left(\xi,\dot\xi\right)d\tau
        \end{equation}
        by \cref{sigcont} and the monotone convergence Theorem. Each $a_n$ is bigger than $c$, thus~\cref{eq:loclagmin,eq:fluxcrit} yield that it is admissible for $\xi$. \Cref{timeoflagpar} then implies that, for each $n\in\Nds$, there is an $a_n$--Lagrangian reparametrization $\zeta_n:[0,T_n]\to\Gamma$ of $\xi$. It follows from \cref{repsigma} that
        \begin{equation*}
            \int_0^T\sigma_{a_n}\left(\xi,\dot\xi\right)d\tau=\!\int_0^{T_n}\sigma_{a_n}\left(\zeta_n,\dot\zeta_n\right)d\tau=\!\int_0^{T_n}\!\left(L\left(\zeta_n,\dot\zeta_n\right)+a_n\right)d\tau\ge\!\int_0^{T_n}\left(L\left(\zeta_n,\dot\zeta_n\right)+c\right)d\tau,
        \end{equation*}
        we can then conclude thanks to~\cref{eq:approxlagpar1,laglbound}.
    \end{proof}

    \section{Lipschitz Continuity of the Minimal Action}\label{minact}

    We consider the minimal action
    \begin{equation}\label{eq:minact}
        h_T(y,x)=\min\left\{\int_0^T L\left(\xi,\dot\xi\right)d\tau:\text{$\xi$ is a curve with $\xi(0)=y$, $\xi(T)=x$}\right\},
    \end{equation}
    for $(y,x,T)\in\Gamma^2\times\Rds^+$. In this \lcnamecref{minact} we will provide a Lipschitz continuity result for the minimal action using Lagrangian parametrizations. This is an improvement with respect to \cite[Theorem 5.4]{PozzaSiconolfi23}, which proves the continuity of $(y,x,T)\mapsto h_T(y,x)$.

    \begin{lem}\label{lagparisopt}
        Given $(y,x,T)\in\Gamma^2\times\Rds^+$ there exist an optimal curve $\zeta$ for $h_T(y,x)$ and a constant $a\ge a_\zeta$ such that $\zeta$ has an $a$--Lagrangian parametrization.
    \end{lem}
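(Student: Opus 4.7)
The proof combines the existence of minimizers with \cref{timeoflagpar} and \cref{laglbound}. First, the direct method furnishes an optimal curve $\xi:[0,T]\to\Gamma$ for $h_T(y,x)$: the superlinearity encoded in \cref{condsuplin} yields equi-absolute continuity of a minimizing sequence (via de la Vallée-Poussin), while the convexity \cref{condconv} gives weak lower semicontinuity of the action functional, and a compactness argument on $\Gamma$ produces the minimizer. Alternatively, one can adapt \cite[Theorem 5.2]{PozzaSiconolfi23} to the fixed-endpoints setting.

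Assume first that $\xi$ has a.e.\ non-vanishing derivative. By \cref{timeoflagpar}, the intervals $T(a)=[\underline T(a),\ovT(a)]$ associated with $\xi$ satisfy $\ovT(a)\to 0$ as $a\to\infty$, nest as $a$ increases via the one-sided continuity statements, and cover $(0,\infty)$ as $a$ ranges over $[a_\xi,\infty)$. Hence there exists $a\ge a_\xi$ with $T\in T(a)$, yielding an $a$--Lagrangian reparametrization $\zeta:[0,T]\to\Gamma$ of $\xi$. Since $\zeta$ has the same endpoints as $\xi$, it is admissible for $h_T(y,x)$.

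Optimality of $\zeta$ is verified by chaining \cref{repsigma} and \cref{laglbound}: from the $a$--Lagrangian parametrization of $\zeta$ we have $\int_0^T L(\zeta,\dot\zeta)\,d\tau+aT=\int_0^T\sigma_a(\zeta,\dot\zeta)\,d\tau$; \cref{repsigma} gives $\int_0^T\sigma_a(\zeta,\dot\zeta)\,d\tau=\int_0^T\sigma_a(\xi,\dot\xi)\,d\tau$; and \cref{laglbound} is applicable along $\xi$ because $a\ge a_\xi$ implies $a\ge c_x$ at every vertex visited by $\xi$ (while at non-vertex points the inequality $L+a\ge\sigma_a$ is the Fenchel inequality, valid as soon as $a$ lies in the range of $H_\gamma$, itself guaranteed by $a\ge a_\xi$). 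Thus $\int_0^T L(\xi,\dot\xi)\,d\tau+aT\ge\int_0^T\sigma_a(\xi,\dot\xi)\,d\tau$, and combining these identities gives $\int_0^T L(\zeta,\dot\zeta)\,d\tau\le\int_0^T L(\xi,\dot\xi)\,d\tau$, so $\zeta$ is also optimal. Since $\zeta$ shares its range with $\xi$, $a_\zeta=a_\xi\le a$, concluding this case.

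The principal obstacle is the possibility that $\xi$ has vanishing derivative on a set of positive measure. Here I would decompose $[0,T]$ into maximal open intervals on which $\xi$ is constant at points $p_j$ and intervals of a.e.\ non-vanishing motion. Optimality of $\xi$ forces all the values $L(p_j,0)$ to coincide (by a rearrangement of the stationary time), identifying a candidate $a^\ast=-L(p_j,0)\le a_\xi$ compatible with the stationary portions. A continuity/monotonicity analysis of the windows $T(a)$ from \cref{timeoflagpar} on each moving piece, together with a judicious modification of the stationary structure to upgrade $a^\ast$ to some $a\ge a_\xi$, should produce a common admissible $a$ whose corresponding reparametrizations on the moving pieces have durations summing to $T$ minus the total stationary duration. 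Concatenating these $a$--Lagrangian reparametrizations with the stationary portions (which automatically satisfy the $a$--Lagrangian condition by the choice of $a$) yields the desired $\zeta$.
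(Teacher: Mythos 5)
Your first two paragraphs reproduce the paper's argument for the non-degenerate case: apply \cref{timeoflagpar} (its final clause gives an $a\ge a_\xi$ with $T\in T(a)$), then chain the $a$--Lagrangian identity with \cref{repsigma} and \cref{laglbound}; your observation that $a\ge a_\xi$ forces $a\ge c_x$ at every vertex visited by $\xi$, so that the vertex case of \cref{laglbound} applies, is exactly the point the paper leaves implicit. The gap is in your third paragraph. The paper needs no decomposition into stationary and moving pieces: it first invokes \cref{repconstspeed} to write $\xi$ as a reparametrization of a constant-speed curve $\zeta_0:[0,T']\to\Gamma$; unless $\xi$ is constant (the trivial case $y=x$, settled by taking $a=-L(x,0)$), that constant speed is positive, so $\zeta_0$ has a.e.\ non-vanishing derivative and \cref{timeoflagpar} applies to $\zeta_0$, producing an $a$--Lagrangian reparametrization $\zeta:[0,T]\to\Gamma$ on the prescribed interval. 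Since $\xi$ and $\zeta$ are both reparametrizations of $\zeta_0$, \cref{repsigma} still equates their $\sigma_a$--integrals and your optimality chain goes through verbatim.

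Your proposed treatment of the degenerate case would not work as written. A stationary portion at a point $p_j$ satisfies the $a$--Lagrangian condition only if $L(p_j,0)+a=\sigma_a(p_j,0)=0$, i.e.\ only for the single value $a=-L(p_j,0)$, which is at most $a_\xi$ and is in general incompatible both with having several distinct stationary points and with the windows $T(a)$ needed for the moving pieces; so the stationary portions cannot ``automatically'' be retained and concatenated, and the rearrangement argument for equality of the $L(p_j,0)$ is neither established nor sufficient. The missing idea is precisely to discard all stationary time through the constant-speed reparametrization and let \cref{timeoflagpar} redistribute the entire duration $T$ over the resulting curve; the cost comparison survives because it is carried out through the reparametrization-invariant $\sigma_a$--integrals rather than through $L$ along $\xi$ itself.
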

    \begin{proof}
        Given $(y,x,T)\in A_C$ there is an optimal curve $\xi:[0,T]\to\Gamma$ for $h_T(y,x)$. If $|\dot\xi|_2=0$ a.e., i.e., $y=x$, then, setting $a\coloneqq-L(x,0)$, $\xi$ has an $a$--Lagrangian parametrization. Otherwise, by \cref{repconstspeed}, it is a reparametrization of a curve $\zeta_0:[0,T']\to\Gamma$ with $|\dot\zeta_0|_2$ constant a.e.. Since by our assumptions $\zeta_0$ has a.e.\ non-vanishing derivative, \cref{timeoflagpar} yields the existence of a constant $a$ and an $a$--Lagrangian reparametrization $\zeta:[0,T]\to\Gamma$ of $\zeta_0$. In particular we have, thanks to \cref{repsigma,laglbound},
        \begin{equation*}
            \int_0^T\left(L\left(\xi,\dot\xi\right)+a\right)d\tau\ge\int_0^T\sigma_a\left(\xi,\dot\xi\right)d\tau=\int_0^T\sigma_a\left(\zeta,\dot\zeta\right)d\tau=\int_0^T\left(L\left(\zeta,\dot\zeta\right)+a\right)d\tau,
        \end{equation*}
        which shows that $\zeta$ is optimal.
    \end{proof}

    \begin{lem}\label{lipcur}
        Given $C>0$ and
        \begin{equation}\label{eq:lipcur.1}
            A_C\coloneqq\left\{(y,x,T)\in\Gamma^2\times\Rds^+:d_\Gamma(y,x)\le CT\right\},
        \end{equation}
        we have that for all $(y,x,T)\in A_C$ there is a constant $\kappa$, depending only on $C$, such that there exists an optimal curve for $h_T(y,x)$ which is Lipschitz continuous of rank $\kappa$.
    \end{lem}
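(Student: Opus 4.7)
The plan is to apply \cref{lagparisopt} to extract an optimal curve $\zeta \colon [0,T] \to \Gamma$ for $h_T(y,x)$ carrying an $a$-Lagrangian parametrization, and then to bound the parameter $a$ by a quantity depending only on $C$. Given such a bound, \cref{lagparreg} immediately yields a Lipschitz constant for $\zeta$ depending only on $C$.

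First, I would upper-bound $h_T(y,x)$ by evaluating the action along a length-minimizing geodesic from $y$ to $x$, reparametrized to have constant Euclidean speed $d_\Gamma(y,x)/T \le C$ on $[0,T]$. Continuity of each $L_\gamma$ on compacts and the definition of $L$ at vertices produce a constant $M_C$ depending only on $C$ with $h_T(y,x) \le T M_C$. Dually, the uniform superlinearity of $L$ granted by \labelcref{condsuplin} together with the finiteness of $\Ecal$ yields a convex superlinear $\tilde g\colon \Rds^+ \to \Rds$ and a constant $K \ge 0$ satisfying $L(x,q) \ge \tilde g(|q|_2) - K$ on $T\Gamma$ (the constant $K$ absorbs the vertex values $L(x,0) = -c_x$). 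Jensen's inequality applied along any curve $\xi\colon [0,T] \to \Gamma$ of length $\ell_\xi$ then gives
\[
    \int_0^T L(\xi,\dot\xi)\,d\tau \;\ge\; T\tilde g(\ell_\xi / T) - KT,
\]
so any optimal $\xi$ for $h_T(y,x)$ must satisfy $\ell_\xi \le C'T$ with $C' \coloneqq \tilde g^{-1}(M_C + K)$ depending only on $C$.

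I would then apply \cref{lagparisopt} to obtain an optimal $\zeta$ with $a$-Lagrangian parametrization, of length at most $C'T$. Set $A_0 \coloneqq \max_{x \in \Vbf} c_x \vee a_0$. If $a \le A_0$, the parameter is a priori bounded by a constant and \cref{lagparreg} closes the argument. Otherwise $a > A_0$, and the defining equation $L(\zeta,\dot\zeta) + a = \sigma_a(\zeta,\dot\zeta)$ forbids $\zeta$ from pausing, both at vertices (since $L(x,0)+a = a-c_x > 0 = \sigma_a(x,0)$) and on arc interiors (since $a > a_\gamma$ for every $\gamma$). On each arc the pullback velocity $\dot\eta$ lies in $\partial_\mu H_\gamma(s, \sigma_{\gamma,a}^\pm(s))$, and combining \labelcref{condsuplin} with the convexity estimate
\[
    \partial_\mu H_\gamma(s, \sigma_{\gamma,a}^+(s)) \;\ge\; \frac{a - H_\gamma(s,0)}{\sigma_{\gamma,a}^+(s)} \;\longrightarrow\; \infty \quad (a \to \infty),
\]
its symmetric analogue for $\sigma^-$, and compactness of $[0,1] \times \Ecal$, one extracts an increasing function $\underline v$ with $\underline v(a) \to \infty$ such that $|\dot\zeta(\tau)|_2 \ge \underline v(a)$ for a.e.\ $\tau \in [0,T]$. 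Integrating yields $T\underline v(a) \le \ell_\zeta \le C'T$, forcing $a \le \underline v^{-1}(C')$. Then \cref{lagparreg} produces the sought constant $\kappa$, e.g.\ $\kappa = \kappa_{A_0 \vee \underline v^{-1}(C')}$.

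The main obstacle is the uniform lower velocity bound $\underline v(a)$: one must show that the velocity along any $a$-Lagrangian parametrization is bounded away from zero, uniformly in $s \in [0,1]$, $\gamma \in \Ecal$, and in the $\pm$ branch, with a bound diverging as $a \to \infty$. This requires a careful analysis of the subgradient $\partial_\mu H_\gamma$ on the level set $\{H_\gamma(s,\cdot) = a\}$ using only the convexity and superlinearity assumed on $H_\gamma$, and the clean separation of the low-energy regime $a \le A_0$ — where $\zeta$ may pause, but where $a$ is already a priori bounded by a constant.
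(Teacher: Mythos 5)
Your proposal is correct in substance but reaches the bound on the Lagrangian parameter $a$ by a genuinely different mechanism than the paper. Both arguments share the same scaffolding: the action is bounded above by $M_CT$ along a constant-speed geodesic, the coercivity of $L$ converts this into a length (equivalently, average-speed) bound $\ell_\zeta\le C'T$ for any optimal curve, \cref{lagparisopt} supplies an optimal curve with an $a$--Lagrangian parametrization, and \cref{lagparreg} converts a bound on $a$ into the Lipschitz constant $\kappa$. Where you diverge is the step from ``average speed controlled'' to ``$a$ controlled''. The paper selects a single time $t'$ in the positive-measure set $\left\{\left|\dot\zeta\right|_2\le 2M_2\right\}$ at which the parametrization identity holds and $\zeta(t')$ lies in the interior of an arc, and there reads off $\left|\sigma^+_{\gamma',a}(s')\right|\le M_3$ from Clarke's gradient bound for $L$ (or from $a\le a_{\gamma'}$ when $\dot\zeta(t')=0$), whence $a=H_{\gamma'}\left(s',\sigma^+_{\gamma',a}(s')\right)\le a^*$. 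You instead argue globally: for $a$ above a threshold $A_0$ the parametrization identity forbids pausing both at vertices and on arc interiors, the velocity is a.e.\ bounded below by a quantity $\underline v(a)$ diverging with $a$, obtained from the subgradient of $H_\gamma$ on the level set $\{H_\gamma(s,\cdot)=a\}$, and integrating contradicts the length bound unless $a$ is bounded by a constant depending only on $C'$. The paper's route needs only one good point but leans on Clarke calculus for $L$; yours needs uniformity in $s$, in $\gamma$ and in the two branches $\sigma^\pm$, but uses only elementary convex duality for $H_\gamma$ together with \labelcref{condsuplin}, and it makes transparent why large $a$ forces fast motion. Two details to tighten in your version, both routine: the inequality $p\ge\bigl(a-H_\gamma(s,0)\bigr)/\sigma^+_{\gamma,a}(s)$ for $p\in\partial_\mu H_\gamma\left(s,\sigma^+_{\gamma,a}(s)\right)$ presupposes $\sigma^+_{\gamma,a}(s)>0$, so the threshold should be raised to also dominate $\max_{\gamma,s}H_\gamma(s,0)$ (the low range is anyway covered by the monotonicity of $a\mapsto\kappa_a$ from \cref{lagparreg}); and one should observe that $\left\{\tau:\zeta(\tau)\in\Vbf,\ \dot\zeta(\tau)\ne0\right\}$ is null, so the arc-interior subgradient analysis indeed applies for a.e.\ $\tau$.
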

    \begin{proof}
        We notice that, if $(y,x,T)\in A_C$, there is a curve $\xi:[0,T]\to\Gamma$ with $|\dot\xi|_2\le C$ such that $\xi(0)=y$ and $\xi(T)=x$. Consequently, setting $M_1\coloneqq\sup\limits_{x\in\Gamma,|q|_2\le C}L(x,q)$, we get
        \begin{equation}\label{eq:lipcur1}
            \int_0^T L\left(\xi,\dot\xi\right)d\tau\le T\sup_{x\in\Gamma,|q|_2\le C}L(x,q)=M_1T.
        \end{equation}
        Since $L$ is a superlinearly coercive function we can choose two positive $A$ and $B$ such that
        \begin{equation*}
            A|q|_2-B\le L(x,q),\qquad\text{for any }(x,q)\in T\Gamma,
        \end{equation*}
        thus, if $\zeta:[0,T]\to\Gamma$ is an optimal curve for $h_T(y,x)$, we have by~\cref{eq:lipcur1}
        \begin{equation*}
            A\int_0^T\left|\dot\zeta(\tau)\right|_2d\tau-BT\le\int_0^T L\left(\zeta,\dot\zeta\right)d\tau\le\int_0^T L\left(\xi,\dot\xi\right)d\tau\le M_1T.
        \end{equation*}
        Setting $M_2\coloneqq\dfrac{M_1+B}A$, we then have that, whenever $(y,x,T)\in A_C$ and $\zeta$ is an optimal curve for $h_T(y,x)$,
        \begin{equation*}
            \int_0^T\left|\dot\zeta(\tau)\right|_2d\tau\le M_2T
        \end{equation*}
        and consequently
        \begin{equation}\label{eq:lipcur2}
            \left|\left\{t\in[0,T]:\left|\dot\zeta(t)\right|_2\le2M_2\right\}\right|\ge\frac T2.
        \end{equation}
        Given $(y,x,T)\in A_C$ we fix, thanks to \cref{lagparisopt}, an optimal curve $\zeta$ for $h_T(y,x)$ and a constant $a\ge a_\zeta$ such that $\zeta$ has an $a$--Lagrangian parametrization. $\zeta$ is differentiable a.e., therefore~\cref{eq:lipcur2} yields the existence of a $t'\in[0,T]$, a $\gamma'\in\Ecal$ and an $s'\in(0,1)$ such that $\zeta$ is differentiable in $t'$, $\left|\dot\zeta(t')\right|_2\le2M_2$, $\zeta(t')=\gamma'(s')$ and
        \begin{equation}\label{eq:lipcur3}
            L\left(\zeta\left(t'\right),\dot\zeta\left(t'\right)\right)+a=\sigma_a\left(\zeta\left(t'\right),\dot\zeta\left(t'\right)\right)=\sigma_{\gamma',a}^+\left(s'\right)\frac{\dot\zeta(t')\cdot\dot\gamma'(s')}{|\dot\gamma'(s')|_2^2}.
        \end{equation}
        Assuming that $\dot\zeta(t')\ne0$ we have that $q\mapsto\sigma_a(\zeta(t'),q)$ is differentiable in $\dot\zeta(t')$ and
        \begin{equation*}
            \dot\gamma'\left(s'\right)\cdot\partial_q\sigma_a\left(\zeta\left(t'\right),\dot\zeta\left(t'\right)\right)=\sigma_{\gamma',a}^+\left(s'\right).
        \end{equation*}
        Moreover, we have by~\cref{eq:lipcur3,laglbound} that $q\mapsto\sigma_a(\zeta(t'),q)$ is a subtangent to $q\mapsto L(\zeta(t'),q)+a$ at $\dot\zeta(t')$, therefore, see for instance \cite[Proposition 2.2.7]{Clarke90},
        \begin{equation*}
            \sigma_{\gamma',a}^+(s)\in\dot\gamma'\left(s'\right)\cdot\partial_q L\left(\zeta\left(t'\right),\dot\zeta\left(t'\right)\right).
        \end{equation*}
        If instead $\dot\zeta(t')=0$ then
        \begin{equation*}
            a=-L_{\gamma'}\left(s',0\right)=\min_{\mu\in\Rds}H_{\gamma'}\left(s',\mu\right)\le\max_{s\in[0,1]}\min_{\mu\in\Rds}H_{\gamma'}(s,\mu)=a_{\gamma'},
        \end{equation*}
        thus $\sigma_{\gamma',a}^+(s')\le\sigma_{\gamma',a_{\gamma'}}^+(s')$. In both cases, since $\left|\dot\zeta(t')\right|_2\le2M_2$, we further have by \cite[Corollary to Proposition 2.2.6]{Clarke90} that
        \begin{equation*}
            \left|\sigma_{\gamma',a}^+\left(s'\right)\right|\le\sup_{\gamma\in\Ecal,s\in[0,1],|q|_2\le2M_2+2}(|\dot\gamma(s)|_2|L(\gamma(s),q)|)\vee\left|\sigma_{\gamma,a_\gamma}^+(s)\right|\eqqcolon M_3,
        \end{equation*}
        which in turn implies that
        \begin{equation*}
            a=H_\gamma\left(s',\sigma_{\gamma',a}^+\left(s'\right)\right)\le\max_{\gamma\in\Ecal,s\in[0,1],|\mu|\le M_3}H_\gamma(s,\mu)\eqqcolon a^*.
        \end{equation*}
        It follows from \cref{lagparreg} that there is a constant $\kappa$ depending on $a^*$ such that $|\dot\zeta|_2\le\kappa$ a.e.. We conclude this proof observing that the constant $a^*$ only depends on $C$, thus, for each $(y,x,T)\in A_C$, it is always possible to select an optimal curve for $h_T(y,x)$ which is also $\kappa$--Lipschitz continuous.
    \end{proof}

    \Cref{lipcur} is crucial for the proof of the next \namecref{minactlip}.

    \begin{prop}\label{minactlip}
        Let $A_C$ be defined by~\cref{eq:lipcur.1}, then there is a constant $\ell$ such that the minimal action in~\cref{eq:minact} is Lipschitz continuous of rank $\ell$ on $A_C$.
    \end{prop}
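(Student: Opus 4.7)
The plan is to derive the Lipschitz estimate for $h_T(y,x)$ by bounding, for arbitrary $(y,x,T),(y',x',T')\in A_C$ (WLOG $T\le T'$) and with $\delta\coloneqq d_\Gamma(y,y')+d_\Gamma(x,x')+(T'-T)$, the one-sided difference
\begin{equation*}
    h_{T'}(y',x')-h_T(y,x)\le\ell\cdot\delta,
\end{equation*}
with $\ell$ depending only on $C$; the reverse inequality follows by swapping the roles of the two triples. The key ingredient is \cref{lipcur}: pick an optimal curve $\zeta:[0,T]\to\Gamma$ for $h_T(y,x)$ that is $\kappa$-Lipschitz with $\kappa=\kappa(C)$. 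Continuity of $L$ then gives a constant $M$ with $|L(x,q)|\le M$ whenever $|q|_2\le\kappa\vee1$, and a constant $M_0\coloneqq\sup_{x\in\Gamma}|L(x,0)|$.

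The main construction produces a competitor $\widetilde\zeta$ for $h_{\widetilde T}(y',x')$ with $\widetilde T\coloneqq T+d_\Gamma(y,y')+d_\Gamma(x,x')$, by concatenating a unit-speed geodesic from $y'$ to $y$, the curve $\zeta$, and a unit-speed geodesic from $x$ to $x'$. Evaluating the action piece by piece yields
\begin{equation*}
    h_{\widetilde T}(y',x')\le h_T(y,x)+M\bigl(d_\Gamma(y,y')+d_\Gamma(x,x')\bigr)\le h_T(y,x)+M\delta.
\end{equation*}
When $T'\ge\widetilde T$, extending $\widetilde\zeta$ by resting at $x'$ for the remaining $T'-\widetilde T$ units adds at most $M_0(T'-T)\le M_0\delta$ to the action, and we are done.

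The main obstacle is the time-compression case $T'<\widetilde T$, i.e.\ $d_\Gamma(y,y')+d_\Gamma(x,x')>T'-T$. The plan is to split it further by comparing $\widetilde T$ and $T'$. If $\widetilde T\le2T'$, time-rescale $\widetilde\zeta$ by the factor $T'/\widetilde T\in[1/2,1)$: the rescaled curve is still valued in $\Gamma$, joins $y'$ to $x'$ in time $T'$, and has speed bounded by $2\kappa$, so its action differs from that of $\widetilde\zeta$ by at most a constant (depending only on $C$, via continuity of $L$ on $\Gamma\times\{|q|_2\le2\kappa\}$) times $|\widetilde T-T'|\le\delta$. If instead $\widetilde T>2T'$, abandon the construction and use the two-sided a priori bound $|h_S(z,w)|\le K'S$ valid on $A_C$ (the upper bound by the straight-line competitor at speed $\le C$, whose action is controlled by $\sup_{|q|_2\le C}|L(\cdot,q)|$; the lower bound by the superlinearity of $L$, which gives $L\ge -B$): the inequality $\widetilde T>2T'$ forces $d_\Gamma(y,y')+d_\Gamma(x,x')>T'\ge T$, hence $T+T'\le2\delta$ and so $|h_T(y,x)|+|h_{T'}(y',x')|\le K'(T+T')\le2K'\delta$. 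Assembling the three scenarios produces the required inequality and completes the proof.
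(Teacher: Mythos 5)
Your competitor-construction skeleton (take the $\kappa$--Lipschitz optimal curve from \cref{lipcur}, graft short connecting curves onto both ends, then fix up the total time) is the same as the paper's; the difference lies in how the time mismatch is corrected, and that is where the one real weak point of your argument sits. The paper first proves Lipschitz continuity in $T$ alone with fixed endpoints: for $|T-T'|<(T\wedge T')/2$ it modifies only the terminal piece $\xi|_{[T-2|T-T'|,T]}$ of the optimal curve, stretching it over an interval of comparable length, so the error is controlled by $\sup_{|q|_2\le 2\kappa}L$ and $\inf L$ alone, and the general case follows by telescoping over small time increments; the spatial increments are then absorbed exactly as you do. Your global rescaling in the regime $T'<\widetilde T\le 2T'$ instead requires estimating $\int_0^{\widetilde T}\bigl(\rho L(\widetilde\zeta,\dot{\widetilde\zeta}/\rho)-L(\widetilde\zeta,\dot{\widetilde\zeta})\bigr)d\tau$ with $\rho=T'/\widetilde T$. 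Since the integration interval has length $\widetilde T$, which may be large, \emph{continuity} of $L$ on $\{|q|_2\le 2\kappa\}$ is not enough to produce a bound of order $(1-\rho)\widetilde T=\widetilde T-T'$: the per-unit-time error must be linear in the momentum increment, i.e.\ you need Lipschitz continuity of $L$ in $q$ uniformly on bounded sets. This is available here, because each $L_\gamma(s,\cdot)$ is convex, hence locally Lipschitz uniformly in $s$ and $\gamma$ by Clarke's gradient bound (the paper uses exactly this in \cref{lagparreg}), but it must be invoked explicitly; as written the step does not follow from continuity. Two smaller points: $L$ is not continuous on $T\Gamma$ at $(x,0)$ for $x\in\Vbf$ (it equals $-c_x$ there), so the constants $M$ and $M_0$ should be obtained as suprema over the finitely many arcs rather than ``by continuity''; and the reverse inequality is not a pure symmetry, since after swapping the roles the normalization $T\le T'$ places you permanently in the compression cases ($T'+d\ge T$ with $d$ the sum of the two distances), so the three-way split must be rechecked --- it does go through, e.g.\ $T'+d>2T$ still forces $T+T'\le 5\delta$. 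With these repairs your proof is correct and somewhat shorter than the paper's, at the price of the extra regularity of $L$ that the paper's tail-surgery-plus-telescoping deliberately avoids.
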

    \begin{proof}
        We fix $(y,x,T)\in A_C$, then \cref{lipcur} shows that there is a constant $\kappa$, depending only on $C$, and an optimal curve $\xi$ for $h_T(y,x)$ such that $\xi$ is $\kappa$--Lipschitz continuous. We set
        \begin{equation*}
            \ell'\coloneqq\sup_{(x,q)\in T\Gamma,|q|_2\le2\kappa}L(x,q),\qquad l\coloneqq\inf_{(x,q)\in T\Gamma}L(x,q)\qquad\text{and}\qquad\ell''\coloneqq3\ell'-2l.
        \end{equation*}
        We start proving that, if $(y,x,T')\in A_C$,
        \begin{equation}\label{eq:minactlip1}
            |h_T(y,x)-h_{T'}(y,x)|\le\ell''\left|T-T'\right|.
        \end{equation}
        We temporarily assume that
        \begin{equation}\label{eq:minactlip2}
            \left|T'-T\right|<\frac{T\wedge T'}2,
        \end{equation}
        then we define
        \begin{equation*}
            \ovxi(t)\coloneqq\left\{
            \begin{aligned}
                &\xi(t),&&\text{if }t\in\left[0,T-2\left|T-T'\right|\right),\\
                &\xi\left(2\left|T-T'\right|\frac{t-T+2|T-T'|}{T'-T+2|T-T'|}+T-2\left|T-T'\right|\right),&&\text{if }t\in\left[T-2\left|T-T'\right|,T'\right],
            \end{aligned}
            \right.
        \end{equation*}
        which is $2\kappa$--Lipschitz continuous curve connecting $y$ to $x$, thus
        \begin{align*}
            h_{T'}(y,x)-h_T(y,x)\le\,&\int_0^{T'}L\left(\ovxi,\dot\ovxi\right)d\tau-\int_0^T L\left(\xi,\dot\xi\right)d\tau\\
            =\,&\int_{T-2|T'-T|}^{T'}L\left(\ovxi,\dot\ovxi\right)d\tau-\int_{T-2|T'-T|}^T L\left(\xi,\dot\xi\right)d\tau\\
            \le\,&3\ell'\left|T-T'\right|-2l\left|T-T'\right|\le\ell''\left|T-T'\right|.
        \end{align*}
        Interchanging the roles of $T$ and $T'$ we get that~\cref{eq:minactlip1} holds true whenever~\cref{eq:minactlip2} is satisfied.\\
        Now assume that~\cref{eq:minactlip2} does not hold, then we pretend without loss of generality that $T'>T$ and choose an integer $m$ such that
        \begin{equation*}
            \frac{|T'-T|}m<\frac{T\wedge T'}2=\frac T2.
        \end{equation*}
        We define the sequence $\{T_i\}_{i=0}^m$ such that
        \begin{equation*}
            T_0\coloneqq T,\qquad T_i\coloneqq T_{i-1}+\frac{|T'-T|}m
        \end{equation*}
        and observe that $T_m=T'$. By definition $T_{i-1}$ and $T_i$ satisfy~\cref{eq:minactlip2}, consequently we have from the previous step that
        \begin{equation*}
            |h_T(y,x)-h_{T'}(y,x)|\le\sum_{i=1}^m\left|h_{T_i}(y,x)-h_{T_{i-1}}(y,x)\right|\le\sum_{i=1}^m\ell''|T_i-T_{i-1}|=\ell''\left|T-T'\right|.
        \end{equation*}
        To prove the general case we let $(y',x',T')$ be an element of $A_C$, then we define
        \begin{equation*}
            T_y\coloneqq\frac{d_\Gamma(y,y')}C\quad\text{and}\quad T_x\coloneqq\frac{d_\Gamma(x,x')}C.
        \end{equation*}
        We point out that by definition $(y,y',T_y),(x,x',T_x)\in A_C$, thus $(y',x',T+T_y+T_x)\in A_C$. In particular we have by \cref{lipcur} that there exist two Lipschitz continuous curves $\xi_x$, $\xi_y$ of rank $\kappa$ connecting, respectively, $x$ to $x'$ and $y'$ to $y$. We further define
        \begin{equation*}
            \ovxi(t)\coloneqq\left\{
            \begin{aligned}
                &\xi_y(t),&&\text{if }t\in[0,T_y),\\
                &\xi(t-T_y),&&\text{if }t\in[T_y,T+T_y),\\
                &\xi_x\left(t-T-T_y\right),&&\text{if }t\in[T+T_y,T+T_y+T_x].
            \end{aligned}
            \right.
        \end{equation*}
        Clearly $\ovxi:[0,T+T_y+T_x]\to\Gamma$ is a $\kappa$--Lipschitz continuous curve connecting $y'$ to $x'$, therefore, by~\cref{eq:minactlip1},
        \begin{align*}
            h_{T'}\left(y',x'\right)-h_T(y,x)\le&\,\left|h_{T'}\left(y',x'\right)-h_{T+T_y+T_x}\left(y',x'\right)\right|+h_{T+T_y+T_x}\left(y',x'\right)-h_T(y,x)\\
            \le&\,\ell''\left(\left|T-T'\right|+T_x+T_y\right)+\int_0^{T_y}L\left(\ovxi,\dot\ovxi\right)d\tau+\int_{T+T_y}^{T+T_y+T_x}L\left(\ovxi,\dot\ovxi\right)d\tau\\
            \le&\,\ell''\left(\left|T-T'\right|+T_x+T_y\right)+\ell'(T_x+T_y)\\
            \le&\,\ell''\left(\left|T-T'\right|+\frac2C\left(d_\Gamma\left(y,y'\right)+d_\Gamma\left(x,x'\right)\right)\right).
        \end{align*}
        Finally, interchanging the roles of $(y,x,T)$ and $(y',x',T')$, we prove our claim.
    \end{proof}

    \section{Proof of \crtcrefnamebylabel{liveoptcurve}~\ref{liveoptcurve}}\label{curvecostsec}

    In order to prove \cref{liveoptcurve} we need an auxiliary result.

    \begin{lem}\label{sigest}
        Let $\xi:[0,T]\to\Gamma$ be a curve such that
        \begin{equation}\label{eq:sigest.1}
            \xi([0,T])\cap(\Acal_\Gamma\setminus\Vbf)=\emptyset.
        \end{equation}
        Then there exist two positive constants $A$ and $B$ independent of $\xi$ such that
        \begin{equation}\label{eq:sigest.2}
            A\int_0^T\left|\dot\xi(\tau)\right|_2d\tau-B\le\int_0^T\sigma_c\left(\xi,\dot\xi\right)d\tau.
        \end{equation}
    \end{lem}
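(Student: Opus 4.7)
My plan is to derive the bound from the existence of a strict critical subsolution. The hypothesis $\xi([0,T]) \cap (\Acal_\Gamma \setminus \Vbf) = \emptyset$ combined with the fact that $\Acal_\Gamma$ is a union of entire arcs (\cref{arcaubry}) forces every arc whose interior is touched by $\xi$ to be non-Aubry; for such an arc one necessarily has $a_\gamma < c$, again by \cref{arcaubry}. Then \cref{condqconv}, \cref{eq:sublevelH} and \cref{sigcont} imply that $s \mapsto \sigma_{\gamma,c}^+(s) - \sigma_{\gamma,c}^-(s)$ is continuous and strictly positive on $[0,1]$ for every such $\gamma$, and the finiteness of $\Ecal$ yields a uniform $\delta > 0$ with $\sigma_{\gamma,c}^+(s) - \sigma_{\gamma,c}^-(s) \geq 2\delta$ on every non-Aubry arc.

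The key tool is a \emph{strict critical subsolution}, namely a $w \in C(\Gamma)$ which is a subsolution to \hrefc{} and additionally satisfies $(w \circ \gamma)'(s) \in [\sigma_{\gamma,c}^-(s) + \delta, \sigma_{\gamma,c}^+(s) - \delta]$ for a.e.\ $s \in [0,1]$ on every non-Aubry arc $\gamma$. This is the network counterpart of the classical Fathi--Siconolfi strict subsolution theorem and can be extracted from the machinery of~\cite{SiconolfiSorrentino18}; informally, one perturbs a critical solution by adding a small multiple of a continuous potential whose restriction to each non-Aubry arc has derivative equal to $(\sigma_{\gamma,c}^+ + \sigma_{\gamma,c}^-)/2$, with the finiteness of $\Ecal$ ensuring a uniform perturbation. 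Granted such a $w$, I apply \cref{curveascomp} to write $\xi$ as a concatenation of pieces $\xi = \gamma_i \circ \eta_i$ on intervals $I_i$, with all $\gamma_i$ non-Aubry by hypothesis. Since $w \circ \gamma_i$ is Lipschitz (\cref{subsollip}), the map $t \mapsto w(\xi(t))$ is absolutely continuous, with derivative $(w \circ \gamma_i)'(\eta_i)\dot\eta_i$ a.e.\ on each $I_i$ and zero a.e.\ on the complement (where $\xi$ sits at vertices). Splitting $\dot\eta_i = \dot\eta_i^+ - \dot\eta_i^-$ and using the strict subsolution bounds gives the pointwise inequality
\begin{equation*}
(w \circ \gamma_i)'(\eta_i)\dot\eta_i \leq \bigl(\sigma_{\gamma_i,c}^+(\eta_i) - \delta\bigr)\dot\eta_i^+ - \bigl(\sigma_{\gamma_i,c}^-(\eta_i) + \delta\bigr)\dot\eta_i^- = \sigma_c(\xi,\dot\xi) - \delta|\dot\eta_i|.
\end{equation*}

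Integrating over $[0,T]$, summing over the pieces, and using $|\dot\xi|_2 \leq M|\dot\eta_i|$ on each $I_i$ with $M \coloneqq \max_{\gamma \in \Ecal} \max_{s \in [0,1]} |\dot\gamma(s)|_2$, I arrive at
\begin{equation*}
w(\xi(T)) - w(\xi(0)) \leq \int_0^T \sigma_c(\xi,\dot\xi)\, d\tau - \frac{\delta}{M}\int_0^T |\dot\xi|_2\, d\tau,
\end{equation*}
whence \cref{eq:sigest.2} holds with $A \coloneqq \delta/M > 0$ and $B \coloneqq 2\|w\|_\infty$, both independent of $\xi$ and finite by continuity of $w$ and compactness of $\Gamma$. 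The main obstacle is securing the strict subsolution $w$: if no ready-made reference is available in~\cite{SiconolfiSorrentino18}, one has to perform a Fathi--Siconolfi-style construction adapted to the network, paying particular attention to the compatibility at vertices where subsolutions need not be differentiable and only the continuity and local HJ inequalities of \cref{defsol} are at one's disposal.
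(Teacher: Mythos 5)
Your reduction is clean and the integration argument is sound: granted a critical subsolution $w$ with $(w\circ\gamma)'\in[\sigma_{\gamma,c}^-+\delta,\sigma_{\gamma,c}^+-\delta]$ a.e.\ on every non-Aubry arc, the chain-rule computation, the identity $\sigma_c(\xi,\dot\xi)=\sigma_{\gamma_i,c}^+(\eta_i)\dot\eta_i^+-\sigma_{\gamma_i,c}^-(\eta_i)\dot\eta_i^-$ coming from \cref{invarccomp}, and the bound $|w(\xi(T))-w(\xi(0))|\le2\|w\|_\infty$ do give \cref{eq:sigest.2} with constants independent of $\xi$. Your preliminary observations are also correct: by \cref{arcaubry} every arc whose interior meets $\xi$ is non-Aubry, hence $a_\gamma<c$, and \labelcref{condqconv}, \cref{eq:sublevelH}, \cref{sigcont} together with finiteness of $\Ecal$ give the uniform gap $\sigma_{\gamma,c}^+-\sigma_{\gamma,c}^-\ge2\delta$ on such arcs. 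The gap in the proof is precisely the object you flag as the ``main obstacle'': the existence of a critical subsolution that is \emph{uniformly strict on all non-Aubry arcs} is a substantial theorem, it is not quoted from a precise statement in~\cite{SiconolfiSorrentino18}, and the fallback construction you sketch does not work as described. A ``continuous potential whose restriction to each non-Aubry arc has derivative $(\sigma_{\gamma,c}^++\sigma_{\gamma,c}^-)/2$'' need not exist as a single-valued continuous function on $\Gamma$: on a network with cycles you cannot prescribe derivatives arc by arc unless the prescription has zero circulation around every cycle of non-Aubry arcs, which is exactly what is not guaranteed here. Moreover, adding a small multiple of an arbitrary potential to a critical solution does not preserve the subsolution property; the correct route is a convex combination $\sum_\gamma\lambda_\gamma w_\gamma$ of subsolutions $w_\gamma$, each strict on one non-Aubry arc, and the existence of each $w_\gamma$ is itself the nontrivial content (essentially the characterization of $\Acal_\Gamma$ via strict subsolutions). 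As written, the proof delegates its entire quantitative content to an unproved lemma.

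It is worth noting that the paper deliberately avoids this issue and argues in an elementary way: after reparametrizing $\xi$ to unit arc-speed (\cref{repconstspeed,repsigma}), it splits the estimate into a backtracking part and a net-displacement part. The backtracking part (\cref{sigestauxbis}) is controlled by pairing forward and backward passes over the same segment, using exactly your gap constant $C=\min(\sigma_{\gamma,c}^+-\sigma_{\gamma,c}^-)/2>0$ on non-Aubry arcs; the net-displacement part (\cref{sigestaux}) is controlled combinatorially, by decomposing a long concatenation of non-Aubry arcs into closed subloops, each of which contributes at least a fixed $A_1>0$ to $\int\sigma_c$ because its support is not contained in the Aubry set, plus a simple remainder of bounded length. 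So the two proofs share the same source of positivity on the oscillatory part, but where you invoke a strict subsolution to handle the monotone progress of $\xi$, the paper counts loops. If you want to keep your approach, you must either locate an exact statement of the strict subsolution theorem on networks or prove it (via the convex-combination scheme above), paying attention at the vertices where only continuity and the arcwise inequalities of \cref{defsol} are available.
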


    The proof of this \namecref{liveoptcurve} is divided in three steps. The first two are particular cases of \cref{sigest}.

    \begin{lem}\label{sigestaux}
        Let $\xi:[0,T]\to\Gamma$ be a curve such that
        \begin{equation}\label{eq:sigestaux.1}
            \xi=(\gamma_1\circ\eta_1)*\dotsb*(\gamma_k\circ\eta_k),
        \end{equation}
        where $\gamma_i\in\Ecal$, $\gamma_i((0,1))\cap\Acal_\Gamma=\emptyset$ and $\dot\eta_i=1$ a.e.\ for any $i\in\{1,\dotsc,k\}$. Then there exist two positive constants $A$ and $B$ independent of $\xi$ such that
        \begin{equation}\label{eq:sigestaux.2}
            AT-B\le\int_0^T\sigma_c\left(\xi,\dot\xi\right)d\tau.
        \end{equation}
    \end{lem}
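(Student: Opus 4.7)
The proof relies on the hypothesis $\gamma_i((0,1)) \cap \Acal_\Gamma = \emptyset$, which by the contrapositive of \cref{arcaubry} forces $a_{\gamma_i} < c$ for every arc in the decomposition. Setting $\Ecal_0 := \{\gamma \in \Ecal : \gamma((0,1)) \cap \Acal_\Gamma = \emptyset\}$ (a finite set), assumption~\ref{condqconv} together with \cref{en:sigcont2} yields a uniform gap
\begin{equation*}
m := \min\{\sigma^+_{\gamma, c}(s) - \sigma^-_{\gamma, c}(s) : \gamma \in \Ecal_0,\, s \in [0,1]\} > 0.
\end{equation*}
Moreover, since any simple closed curve in $\Gamma$ traversing at least one arc of $\Ecal_0$ has support not contained in $\Acal_\Gamma$, the critical value characterization~\cref{eq:critvalchar} together with the finiteness of the set of simple cycles gives $m_0 > 0$ such that $\int \sigma_c \ge m_0$ along every such simple closed curve.

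Using $\dot\eta_i = 1$ a.e., the integral simplifies as $\int_0^T \sigma_c(\xi, \dot\xi)\, d\tau = \sum_{i=1}^k \int_{\eta_i(0)}^{\eta_i(T_i)} \sigma^+_{\gamma_i, c}(s)\, ds$. The plan is then to decompose $\xi$ structurally: after merging consecutive pieces that agree in orientation on the same arc, I would group the remaining pieces into \emph{maximal runs}, that is, maximal blocks of consecutive pieces whose underlying arc-pair $\{\gamma, \widetilde\gamma\}$ is common. By~\cref{eq:nosovrap}, distinct arc-supports meet only at vertices, so each maximal run enters and exits at vertices in $\{\gamma(0), \gamma(1)\}$; it is therefore a \emph{bouncing run} if entry and exit agree, and a \emph{traversal run} otherwise. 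Using~\cref{eq:sigcomp} to relate $\sigma^+_{\widetilde\gamma, c}$ to $\sigma^-_{\gamma, c}$ and the bound $\sigma^+_{\gamma, c} - \sigma^-_{\gamma, c} \ge m$, every bouncing run contributes at least $(m/2)$ times its duration to $\int \sigma_c$, while a traversal run contributes the bare arc integral $\int_0^1 \sigma^+_{\gamma, c}\, ds$ plus a further oscillation surplus at rate at least $m/2$ per unit of excess time.

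The sequence of traversal runs then constitutes a discrete walk on the underlying graph of $\Gamma$ using only $\Ecal_0$-arcs. Iterative loop erasure would reduce this walk to a simple path plus finitely many simple closed sub-walks; the simple path, using at most $|\Ecal|$ arcs, contributes a uniformly bounded amount $-C_0$, while each extracted simple closed sub-walk contributes at least $m_0$ by the first paragraph. Since any simple cycle uses at most $|\Ecal|$ arcs, the number of extracted loops is at least (walk length)$/|\Ecal| - 1$; combining with the bouncing and oscillation estimates and observing that $T$ splits as walk length plus total bouncing/oscillation time yields the desired bound $\int_0^T \sigma_c(\xi, \dot\xi)\, d\tau \ge AT - B$ with $A = \min(m_0/|\Ecal|, m/2)$ and $B = m_0 + C_0$. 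The main obstacle is the careful bookkeeping of the loop erasure on a continuous curve that may exhibit interior turn-arounds; this is handled by first distilling $\xi$'s behavior to the discrete walk of vertex-to-vertex traversal runs, which sidesteps the interior transitions entirely.
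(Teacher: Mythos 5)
Your proposal is correct and follows essentially the same route as the paper's proof: both rest on the finiteness of the family of simple cycles built from arcs avoiding $\Acal_\Gamma$ (whose $\sigma_c$--integrals admit a positive lower bound, your $m_0$ and the paper's $A_1$), an iterative loop erasure of the discrete arc sequence leaving a uniformly bounded simple remainder, and a count showing that the number of extracted cycles grows linearly in the number of pieces $k\ge T$. The only real divergence is your separate treatment of bouncing runs via the gap $\min\left(\sigma^+_{\gamma,c}-\sigma^-_{\gamma,c}\right)>0$, which the paper instead subsumes by admitting the closed curves $\gamma*\widetilde\gamma$ into its cycle list and by reading the middle pieces of $\xi$ as full arc traversals; your version is a bit more explicit about interior turn-arounds, but the mechanism is the same.
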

    \begin{proof}
        We preliminarily define the set $\Ecal'\subset\Ecal$ made up by the arcs $\gamma$ such that $\gamma((0,1))\cap\Acal_\Gamma=\emptyset$ and
        \begin{equation}\label{eq:sigestaux1}
            B_1\coloneqq-\min_{s_1,s_2\in[0,1],\gamma\in\Ecal'}\int_{s_1}^{s_2}\sigma_{\gamma,c}^+(s)ds\ge0.
        \end{equation}
        Next we observe that, since the network is finite, there is only a finite number of closed curves
        \begin{equation}\label{eq:sigestaux2}
            \zeta_i\coloneqq\gamma_{i_1}*\dotsc*\gamma_{i_{k_i}}
        \end{equation}
        such that $\zeta_i$ is simple or $\zeta_i=\gamma*\widetilde\gamma$ and, for all $l\in\{1,\dotsc,k_i\}$, $\gamma_{i_l}\in\Ecal'$. Then we define
        \begin{equation}\label{eq:sigestaux3}
            A_1\coloneqq\min_i\int_0^{k_i}\sigma_c\left(\zeta_i,\dot\zeta_i\right)d\tau.
        \end{equation}
        We stress out that $A_1>0$ since the supports of the $\zeta_i$ are not contained in the Aubry set.\\
        If $\xi$ and the $\gamma_i$ in the statement are such that
        \begin{equation}\label{eq:sigestaux4}
            \xi=\gamma_1*\dotsb*\gamma_k
        \end{equation}
        we define $j,l\in\{1,\dotsc,k\}$ as the smallest indices such that $j<l$ and $\gamma_j(0)=\gamma_l(1)$. We assume that such $j$, $l$ exist and, to ease notation, we also assume that $j>1$ and $l<k$; the other cases can be treated with straightforward modifications. We set
        \begin{equation*}
            \zeta'_1\coloneqq\gamma_j*\dotsb*\gamma_l,\qquad\xi'_1\coloneqq\gamma_1*\dotsb*\gamma_{j-1},\qquad\xi'_2\coloneqq\gamma_{l+1}*\dotsb*\gamma_k,
        \end{equation*}
        then $\zeta'_1$ is as in~\cref{eq:sigestaux2}, $\xi'_1$ is simple and $\xi=\xi'_1*\zeta_1'*\xi_2'$. Iterating the above procedure a finite number of times we get that the support of $\xi$ is made up by the closed curves $\{\zeta_i'\}_{i=1}^m$ as in~\cref{eq:sigestaux2} and the non-closed simple curve
        \begin{equation}\label{eq:sigestaux5}
            \overline\xi\coloneqq\gamma_1'*\dotsb*\gamma_n',
        \end{equation}
        where $\{\gamma_i'\}_{i=1}^n$ is a subset of $\{\gamma_i\}_{i=1}^k\subseteq\Ecal'$. Therefore
        \begin{align*}
            \int_0^T\sigma_c\left(\xi,\dot\xi\right)d\tau=\,&\sum_{i=1}^m\int\sigma_c\left(\zeta'_i,\dot\zeta'_i\right)d\tau+\sum_{j=1}^n\int_0^1\sigma_c\left(\gamma'_j,\dot\gamma'_j\right)d\tau\\
            =\,&\sum_{i=1}^m\int\sigma_c\left(\zeta'_i,\dot\zeta'_i\right)d\tau+\sum_{j=1}^n\int_0^1\sigma_{c,\gamma_j'}^+(s)ds.
        \end{align*}
        It follows from~\cref{eq:sigestaux1,eq:sigestaux3} that, for any $i\in\{1,\dotsc,m\}$ and $j\in\{1,\dotsc,n\}$,
        \begin{equation*}
            \int\sigma_c\left(\zeta'_i,\dot\zeta'_i\right)d\tau\ge A_1\qquad\text{and}\qquad\int_0^1\sigma_{c,\gamma_j'}^+(s)ds\ge-B_1,
        \end{equation*}
        thus
        \begin{equation}\label{eq:sigestaux6}
            \int_0^T\sigma_c\left(\xi,\dot\xi\right)d\tau\ge mA_1-nB_1.
        \end{equation}
        It is left to provide an estimate for $m$ and $n$. We start setting $B_2$ as the number of elements in $\Ecal'$, then we infer from~\cref{eq:sigestaux5} that $B_2$ bounds from above $n$. Next we observe that $\xi$ contains at least one closed curve as in~\cref{eq:sigestaux2} if $k\ge B_2$. Moreover, if
        \begin{equation*}
            k\ge m'B_2,
        \end{equation*}
        $\xi$ contains at least $m'$ closed curve as in~\cref{eq:sigestaux2} since $\xi$ can be seen as a concatenation of $m'$ curves as in the previous step, namely each one of these curves contains at least one closed curve as in~\cref{eq:sigestaux2}. Any $m'$ satisfying the above inequality is clearly a lower bound of $m$, thereby
        \begin{equation*}
            m\ge\left\lfloor\frac k{B_2}\right\rfloor\ge\frac k{B_2}-1\qquad\text{and}\qquad n<B_2,
        \end{equation*}
        where $\lfloor\cdot\rfloor$ denotes the floor function. From this and~\cref{eq:sigestaux6} we get
        \begin{equation}\label{eq:sigestaux7}
            \int_0^T\sigma_c\left(\xi,\dot\xi\right)d\tau\ge A_1\frac k{B_2}-A_1-B_1B_2.
        \end{equation}
        Finally let $\xi$ be as in~\cref{eq:sigestaux.1}, it is apparent that
        \begin{equation}\label{eq:sigestaux8}
            \xi\coloneqq(\gamma_1\circ\eta_1)*\xi'*(\gamma_k\circ\eta_k),
        \end{equation}
        where $\xi'$ is as in~\cref{eq:sigestaux4}. Notice that under our current assumptions $k\ge T$, therefore~\cref{eq:sigestaux7,eq:sigestaux8} yield
        \begin{equation}\label{eq:sigestaux9}
            \int_0^T\sigma_c\left(\xi,\dot\xi\right)d\tau\ge A_1\frac{k-2}{B_2}-A_1-B_1(B_2+2)\ge A_1\frac{T-2}{B_2}-A_1-B_1(B_2+2).
        \end{equation}
        Setting
        \begin{equation*}
            A\coloneqq\frac{A_1}{B_2}\qquad\text{and}\qquad B\coloneqq\frac2{B_2}+A_1+B_1(B_2+2),
        \end{equation*}
        \cref{eq:sigestaux9} proves~\cref{eq:sigestaux.2}.
    \end{proof}

    The next step differs from the previous one by the direction of the curve $\xi$, see the difference between the conditions on the $\eta_i$.

    \begin{lem}\label{sigestauxbis}
        Let $\xi:[0,T]\to\Gamma$ be a curve and $\{I_i\}$ an at most countable collection of open disjoint intervals with $\bigcup_i\ovI_i=[0,T]$ such that
        \begin{equation*}
            \xi\left(\ovI_i\right)\subseteq\gamma_i([0,1]),\qquad\text{for each index }i,
        \end{equation*}
        where $\gamma_i$ is an arc of the network. We set $\eta_i\coloneqq\gamma_i^{-1}\circ\xi|_{\ovI_i}$. If, for every index $i$, $\gamma_i((0,1))\cap\Acal_\Gamma=\emptyset$ and $|\dot\eta_i|=1$ a.e., then there exist two positive constants $A$ and $B$ independent of $\xi$ such that
        \begin{equation}\label{eq:sigestauxbis.1}
            AT-B\le\int_0^T\sigma_c\left(\xi,\dot\xi\right)d\tau.
        \end{equation}
    \end{lem}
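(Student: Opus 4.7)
The plan is to reduce to \cref{sigestaux} via straightening: using the compatibility~\cref{eq:sigcomp}, each sub-interval of $I_i$ on which $\dot\eta_i=-1$ can be rewritten as a forward traversal of the inverse arc $\widetilde\gamma_i$, and since $\sigma_c$ is intrinsic to $T\Gamma$ the integrand $\sigma_c(\xi,\dot\xi)$ is preserved. The arcs $\widetilde\gamma_i$ and $\gamma_i$ share the same open support, so the no-Aubry condition $\widetilde\gamma_i((0,1))\cap\Acal_\Gamma=\emptyset$ remains in force. The complication over \cref{sigestaux} is that $\eta_i$ may oscillate (possibly infinitely often within a single $I_i$), so the straightened curve need not be a finite forward concatenation; the loss must be recovered through a spectral-gap estimate that charges the zigzag portions.

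First I would establish a uniform spectral gap: since $\gamma_i((0,1))\cap\Acal_\Gamma=\emptyset$, \cref{arcaubry} gives $a_{\gamma_i}<c$, hence $c>\min_\mu H_{\gamma_i}(s,\mu)$ for every $s\in[0,1]$; together with~\ref{condqconv} and~\cref{sigcont}, this yields $\sigma_{\gamma_i,c}^+(s)>\sigma_{\gamma_i,c}^-(s)$ on $[0,1]$, and by compactness and the finiteness of $\Ecal$ there exists $\delta>0$ such that $\sigma_{\gamma,c}^+-\sigma_{\gamma,c}^-\ge\delta$ on $[0,1]$ for every arc $\gamma$ with $\gamma((0,1))\cap\Acal_\Gamma=\emptyset$. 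Then, on each $I_i$, I would split into $I_i^\pm=\{\dot\eta_i=\pm1\}$ and apply the area formula with the multiplicities $N_i^\pm(s)\coloneqq\#\{\tau\in I_i^\pm:\eta_i(\tau)=s\}$. Setting $\chi_i\coloneqq N_i^+-N_i^-$ and $M_i\coloneqq N_i^+\wedge N_i^-$, the identities $N_i^+=\chi_i^++M_i$ and $N_i^-=\chi_i^-+M_i$ give
\begin{equation*}
\int_{I_i}\sigma_c(\xi,\dot\xi)\,d\tau=\int_0^1\!\bigl[\sigma_{\gamma_i,c}^+\chi_i^+-\sigma_{\gamma_i,c}^-\chi_i^-\bigr]\,ds+\int_0^1\!\bigl(\sigma_{\gamma_i,c}^+-\sigma_{\gamma_i,c}^-\bigr)M_i\,ds,
\end{equation*}
whose second term is bounded below by $\tfrac\delta2(|I_i|-T_i')$ with $T_i'\coloneqq\int_0^1|\chi_i|\,ds$, and whose first term equals $\int\sigma_c(\xi_i^\ast,\dot\xi_i^\ast)\,d\tau$ for the monotone traversal $\xi_i^\ast$ from $\xi(a_i)$ to $\xi(b_i)$ along $\gamma_i$ or $\widetilde\gamma_i$.

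Finally, the nondegenerate $\xi_i^\ast$'s concatenate into a forward curve $\xi^\ast$ which, up to at most two partial pieces at the temporal endpoints, consists of full traversals of arcs whose open supports miss $\Acal_\Gamma$; its total time is $T^\ast=\sum_iT_i'\le T$ and its piece count satisfies $k\ge T^\ast-2$. The combinatorial argument of \cref{sigestaux} then applies verbatim, with the class of admissible arcs enlarged to include the inverse arcs, yielding constants $A,B'>0$ with $\int\sigma_c(\xi^\ast,\dot\xi^\ast)\,d\tau\ge Ak-B'\ge A(T^\ast-2)-B'$. Summing this with the zigzag bound and splitting into the cases $A\ge\delta/2$ and $A<\delta/2$ (using $T^\ast\le T$ in the latter to absorb the residual coefficient of $T^\ast$) produces $\int_0^T\sigma_c(\xi,\dot\xi)\,d\tau\ge A''T-B''$ with $A''=\min(A,\delta/2)>0$. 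The main obstacle is the bookkeeping for a possibly countably infinite $\{I_i\}$: this is handled by the area-formula/degree accounting, which confines the entire oscillation loss to the single term $\frac\delta2(T-T^\ast)$ and thereby reduces the infinite case to the finite combinatorics already established in \cref{sigestaux}.
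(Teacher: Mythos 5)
Your proposal is correct and shares the paper's overall architecture: both arguments reduce $\xi$ to a forward monotone concatenation $\xi^*$ falling under the hypotheses of \cref{sigestaux}, while charging the back-and-forth portions against a uniform gap $\sigma_{\gamma,c}^+-\sigma_{\gamma,c}^-\ge\delta>0$ on arcs whose interiors avoid $\Acal_\Gamma$ (the paper's constant $2C$, obtained from \cref{arcaubry} and \cref{sigcont} exactly as you obtain $\delta$), and both finish with the same $\min(A,\delta/2)$ bookkeeping and the same remark that only finitely many monotone pieces are nondegenerate because $\sum_i T_i'\le T$. Where you genuinely diverge is in the proof of the central per-arc inequality, the analogue of \cref{eq:sigestauxbis2}: the paper approximates the sets $\{\dot\eta_i=\pm1\}$ by finite unions of intervals, iteratively cancels adjacent up--down excursions (each contributing $C(t_3-t_1)$ to the lower bound), and passes to the limit via Arzel\`a--Ascoli; you instead obtain the splitting in a single identity through the one-dimensional area formula with multiplicities $N_i^\pm$ together with the a.e.\ degree identity stating that $N_i^+-N_i^-$ is the signed indicator of the interval between the endpoint values of $\eta_i$. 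Your route avoids the approximation and compactness machinery altogether and makes the wasted time $T-T^*$ appear exactly as $2\sum_i\int_0^1 M_i\,ds$; the price is that you must justify the Banach indicatrix formula and the degree identity for Lipschitz paths (both standard: for a.e.\ level $s$ the preimage is finite and avoids the null set where $|\dot\eta_i|\ne1$, and the signs of $\dot\eta_i$ at consecutive preimages alternate by the intermediate value theorem, so the signed count is $0$ or $\pm1$). Two points deserve a sentence in a full write-up, though neither is a gap and the paper is equally terse about the first: that consecutive nondegenerate pieces of $\xi^*$ match up, which holds because between them $\xi$ is confined to degenerate excursions and to the set where it sits at a vertex, forcing it to remain at a single vertex there; and that $\widetilde\gamma_i((0,1))=\gamma_i((0,1))$, so the no-Aubry hypothesis of \cref{sigestaux} survives the orientation reversal dictated by \cref{eq:sigcomp} --- which you already flag.
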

    \begin{proof}
        We preliminarily assume that $\xi=\gamma\circ\eta$ with $|\dot\eta|=1$ a.e.\ for some $\gamma\in\Ecal$. We also assume, possibly replacing $\gamma$ with $\widetilde\gamma$, that
        \begin{equation}\label{eq:sigestauxbis1}
            \eta(T)\ge\eta(0).
        \end{equation}
        We will show that
        \begin{equation}\label{eq:sigestauxbis2}
            \int_0^T\sigma_c\left(\xi,\dot\xi\right)d\tau\ge\int_0^{\ovT}\sigma_c\left(\ovxi,\dot\ovxi\right)d\tau+CT_0,
        \end{equation}
        where $C$ is a suitable positive constant, $T_0\ge0$, $\ovT\coloneqq T-T_0\ge0$ and $\ovxi\coloneqq\gamma\circ\oveta$ for a curve $\oveta:\left[0,\ovT\right]\to[0,1]$ with $\dot\oveta=1$ a.e.. If $\dot\eta=1$ a.e.\ \cref{eq:sigestauxbis2} is trivial, therefore we assume that this is not the case. We will prove~\cref{eq:sigestauxbis2} proceeding by approximation.\\
        It follows from~\cref{eq:sigestauxbis1} that both
        \begin{equation*}
            E^+\coloneqq\{t\in[0,T]:\dot\eta(t)=1\}\qquad\text{and}\qquad E^-\coloneqq\{t\in[0,T]:\dot\eta(t)=-1\}
        \end{equation*}
        have positive measure. Known properties of the Lebesgue measure yield that, for each $\varepsilon>0$, there are a compact $K_\varepsilon$ and an open set $V_\varepsilon$ such that $K_\varepsilon\subseteq E^+\subseteq V_\varepsilon$ and $|V_\varepsilon\setminus K_\varepsilon|<\varepsilon$. $V_\varepsilon$ is a union of open intervals which cover the compact set $K_\varepsilon$, thereby there is an open set $V'_\varepsilon\subseteq V_\varepsilon$ made up by the union of a finite number of open intervals covering $K_\varepsilon$. Setting $E^+_\varepsilon\coloneqq V_\varepsilon'\cap[0,T]$ and $E_\varepsilon^-\coloneqq[0,T]\setminus E_\varepsilon^+$ it is apparent that they are both the finite union of some intervals and
        \begin{equation*}
            \left|\left|E^+\right|-\left|E^+_\varepsilon\right|\right|<\varepsilon,\qquad\left|\left|E^-\right|-\left|E^-_\varepsilon\right|\right|<\varepsilon,\qquad\text{for every }\varepsilon>0.
        \end{equation*}
        It is then easy to see, denoting with $\chi_E$ the characteristic function of a set $E$, that
        \begin{equation*}
            \dot\eta_\varepsilon(t)\coloneqq\chi_{E^+_\varepsilon}(t)-\chi_{E^-_\varepsilon}(t),\qquad\eta_\varepsilon(t)\coloneqq\int_0^t\dot\eta_\varepsilon(\tau)d\tau,\qquad\text{for }t\in[0,T],
        \end{equation*}
        converge a.e.\ to $\dot\eta$ and $\eta$ in $\Rds$, respectively, as $\varepsilon\to0^+$. Moreover, $\eta_\varepsilon$ is piecewise $C^1$ and $E^+_\varepsilon$, $E^-_\varepsilon$ are nonempty for $\varepsilon$ small enough. We extend by continuity $\sigma_{\gamma,c}^\pm$ on $[-1,2]$, thus, for $\varepsilon>0$ small enough, $\sigma_{\gamma,c}^\pm\circ\eta_\varepsilon$ are well-defined and, as a consequence of~\ref{condsuplin}, \cref{sigcont} and the dominated convergence Theorem,
        \begin{equation}\label{eq:sigestauxbis3}
            \begin{aligned}
                \int_0^T\sigma_c\left(\xi,\dot\xi\right)d\tau=&\int_0^T\left(\sigma_{\gamma,c}^+(\eta(\tau))\chi_{E^+}(\tau)-\sigma_{\gamma,c}^-(\eta(\tau))\chi_{E^-}(\tau)\right)d\tau\\
                =&\lim_{\varepsilon\to0^+}\int_0^T\left(\sigma_{\gamma,c}^+(\eta_\varepsilon(\tau))\chi_{E^+_\varepsilon}(\tau)-\sigma_{\gamma,c}^-(\eta_\varepsilon(\tau))\chi_{E^-_\varepsilon}(\tau)\right)d\tau.
            \end{aligned}
        \end{equation}
        For $\varepsilon$ small enough $E^+_\varepsilon$ and $E^-_\varepsilon$ are the finite union of intervals, hence we can select three points $t_1,t_2,t_3\in[0,T]$ such that $t_1<t_3$, $t_2=\dfrac{t_3+t_1}2$ and either $\dot\eta_\varepsilon=1$ on $(t_1,t_2)$ and $\dot\eta_\varepsilon=-1$ on $(t_2,t_3)$ or $\dot\eta_\varepsilon=-1$ on $(t_1,t_2)$ and $\dot\eta_\varepsilon=1$ on $(t_2,t_3)$. In particular
        \begin{equation*}
            \eta_\varepsilon(t)=\eta_\varepsilon(t_3+t_1-t),\qquad\text{for any }t\in[t_1,t_2].
        \end{equation*}
        For simplification purposes, let us say that $\dot\eta_\varepsilon=1$ on $(t_1,t_2)$. The other case is treated analogously. For $\varepsilon>0$ small enough we get
        \begin{equation*}
            \int_{t_1}^{t_2}\sigma_{\gamma,c}^+(\eta_\varepsilon(\tau))d\tau-\int_{t_2}^{t_3}\sigma_{\gamma,c}^-(\eta_\varepsilon(\tau))d\tau=\int_{t_1}^{t_2}\left(\sigma_{\gamma,c}^+(\eta_\varepsilon(\tau))-\sigma_{\gamma,c}^-(\eta_\varepsilon(t_3+t_1-\tau))\right)d\tau,
        \end{equation*}
        therefore, if we define
        \begin{equation*}
            C\coloneqq\min\frac{\left(\sigma_{\gamma,c}^+(s)-\sigma_{\gamma,c}^-(s)\right)}2,
        \end{equation*}
        where the minimum is taken over the $s\in[0,1]$ and $\gamma\in\Ecal$ such that $\gamma((0,1))\cap\Acal_\Gamma=\emptyset$, we have thanks to \cref{arcaubry} that $C>0$ and, for $\varepsilon>0$ small enough,
        \begin{equation*}
            \int_{t_1}^{t_2}\sigma_{\gamma,c}^+(\eta_\varepsilon(\tau))d\tau-\int_{t_2}^{t_3}\sigma_{\gamma,c}^-(\eta_\varepsilon(\tau))d\tau\ge C(t_3-t_1).
        \end{equation*}
        Moreover, if $\eta'_\varepsilon\coloneqq\eta_\varepsilon|_{[0,t_1]}*\eta_\varepsilon|_{[t_3,T]}$, we obtain that
        \begin{multline*}
            \int_0^T\left(\sigma_{\gamma,c}^+(\eta_\varepsilon(\tau))\chi_{E_\varepsilon^+}(\tau)-\sigma_{\gamma,c}^-(\eta_\varepsilon(\tau))\chi_{E_\varepsilon^-}(\tau)\right)d\tau\\
            \begin{aligned}
                \ge&\int_0^{t_1}\left(\sigma_{\gamma,c}^+\left(\eta_\varepsilon'(\tau)\right)\chi_{E_\varepsilon^+}(\tau)-\sigma_{\gamma,c}^-\left(\eta_\varepsilon'(\tau)\right)\chi_{E_\varepsilon^-}(\tau)\right)d\tau\\
                &+\int_{t_3}^T\left(\sigma_{\gamma,c}^+\left(\eta_\varepsilon'(\tau-t_3+t_1)\right)\chi_{E_\varepsilon^+}(\tau)-\sigma_{\gamma,c}^-\left(\eta_\varepsilon'(\tau-t_3+t_1)\right)\chi_{E_\varepsilon^-}(\tau)\right)d\tau+C(t_3-t_1).
            \end{aligned}
        \end{multline*}
        Iterating the previous step a finite number of times we get by~\cref{eq:sigestauxbis1} that, for $\varepsilon>0$ small enough, there exist $T_\varepsilon\ge0$, $\ovT_\varepsilon\coloneqq T-T_\varepsilon\ge0$ and a curve $\oveta_\varepsilon:\left[0,\ovT_\varepsilon\right]\to[0,1]$ such that $\dot\oveta_\varepsilon=1$ a.e.\ and
        \begin{equation}\label{eq:sigestauxbis4}
            \int_0^T\left(\sigma_{\gamma,c}^+(\eta_\varepsilon(\tau))\chi_{E^+_\varepsilon}(\tau)-\sigma_{\gamma,c}^-(\eta_\varepsilon(\tau))\chi_{E^-_\varepsilon}(\tau)\right)d\tau\ge\int_0^{\ovT_\varepsilon}\sigma_{\gamma,c}^+(\oveta_\varepsilon(\tau))d\tau+CT_\varepsilon.
        \end{equation}
        For $\varepsilon>0$ small enough $\{T_\varepsilon\}\subset[0,T]$ and $\{\oveta_\varepsilon\}$ is an equibounded and equicontinuous collection of curves with $\dot\oveta_\varepsilon=1$ a.e., therefore the Arzelà--Ascoli Theorem yields that there is an infinitesimal subsequence $\{\varepsilon_n\}_{n\in\Nds}$, the times $T_0\ge0$, $\ovT\coloneqq T-T_0\ge0$ and a curve $\oveta:\left[0,\ovT\right]\to[0,1]$ with $\dot\oveta=1$ a.e.\ such that, as $n\to\infty$, $T_{\varepsilon_n}\to T_0$, $\ovT_{\varepsilon_n}\to\ovT$ and $\oveta_{\varepsilon_n}$ uniformly converges to $\oveta$. Finally~\cref{eq:sigestauxbis3,eq:sigestauxbis4} proves~\cref{eq:sigestauxbis2}.\\
        Now let $\xi$ be as in the statement and define $\xi_i\coloneqq\gamma_i\circ\eta_i$. Repeating the previous analysis we obtain for each $\xi_i$, possibly replacing $\gamma_i$ with $\wtgamma_i$, the nonnegative number $\ovT_i$, the curve $\oveta_i:\left[0,\ovT_i\right]\to[0,1]$ with $\oveta_i=1$ a.e.\ and $\ovxi_i\coloneqq\gamma_i\circ\oveta_i$ such that
        \begin{equation}\label{eq:sigestauxbis5}
            T_0\coloneqq T-\sum_i\ovT_i\ge0,
        \end{equation}
        \begin{equation*}
            \int_0^T\sigma_c\left(\xi,\dot\xi\right)d\tau\ge\sum_i\int_0^{\ovT_i}\sigma_c\left(\ovxi_i,\dot\ovxi_i\right)d\tau+CT_0.
        \end{equation*}
        We point out that each $\oveta_i$ whose support is not a point, except at most two, links the extremes of the arc $\gamma_i$ and has speed equal to 1. Thereby for each $\ovT_i$, except at most two, $\ovT_i\in\{0,1\}$. We notice that, due to~\cref{eq:sigestauxbis5}, there are only a finite number of positive times $\ovT_i$. It then apparent that the $\ovxi_i$ with $\ovT_i>0$ can be concatenated into a curve $\ovxi:\left[0,\ovT\right]\to[0,1]$, where $\ovT\coloneqq\sum_i\ovT_i$, therefore
        \begin{equation*}
            \int_0^T\sigma_c\left(\xi,\dot\xi\right)d\tau\ge\int_0^{\ovT}\sigma_c\left(\ovxi,\dot\ovxi\right)d\tau+CT_0.
        \end{equation*}
        $\ovxi$ satisfies the hypotheses of \cref{sigestaux}, thus
        \begin{equation*}
            \int_0^T\sigma_c\left(\xi,\dot\xi\right)d\tau\ge A'\ovT-B+CT_0,
        \end{equation*}
        for some positive constants $A'$, $B$. Setting $A\coloneqq A'\wedge C$, this proves~\cref{eq:sigestauxbis.1}.
    \end{proof}

    \begin{proof}[Proof of \cref{sigest}]
        If $\dot\xi=0$ a.e.\ \cref{eq:sigest.2} is trivial, hence we assume that $\xi$ is not constant. Exploiting \cref{curveascomp} there is an at most countable collection of open disjoint intervals $\{I_i\}$ with $\bigcup_i\ovI_i=[0,T]$ such that
        \begin{equation*}
            \xi\left(\ovI_i\right)\subseteq\gamma_i([0,1]),\qquad\text{for each index }i,
        \end{equation*}
        where $\gamma_i$ is an arc of the network such that $\gamma_i((0,1))\cap\Acal_\Gamma=\emptyset$. $\xi$ is a nonconstant curve, thereby, possibly combining some intervals, we can assume that each $\eta_i\coloneqq\gamma_i^{-1}\circ\xi|_{\ovI_i}$ is a nonconstant curve from $\ovI_i$ into $[0,1]$. We know from \cref{repconstspeed} that for every index $i$ there is a curve $\eta_i':[0,T_i']\to[0,1]$ with $|\dot\eta_i'|=1$ a.e.\ and an absolutely continuous function $\psi_i:\ovI_i\to[0,T_i']$ such that $\eta_i=\eta_i'\circ\psi_i$. We have by \cref{reparac} that
        \begin{equation*}
            \int_{\ovI_i}|\dot\eta_i(\tau)|d\tau=\int_{\ovI_i}\left|\dot\eta_i'(\psi_i(\tau))\right|\left|\dot\psi_i(\tau)\right|d\tau=T_i'.
        \end{equation*}
        Setting $T'\coloneqq\sum_i T_i'$, $C_1\coloneqq\min\limits_{\gamma\in\Ecal,s\in[0,1]}|\dot\gamma(s)|_2>0$ and $C_2\coloneqq\max\limits_{\gamma\in\Ecal,s\in[0,1]}|\dot\gamma(s)|_2$, it then follows that
        \begin{equation}\label{eq:sigest1}
            C_1T'\le\int_0^T\left|\dot\xi(\tau)\right|_2d\tau=\sum_i\int_{\ovI_i}|\dot\gamma_i(\eta_i(\tau))|_2|\dot\eta_i(\tau)|d\tau\le C_2T'.
        \end{equation}
        In particular $T'$ is bounded. Next we notice that each $I_i$ can be seen as the open interval $(t_i,t_i+T_i)$, for suitable $t_i$ and $T_i$, then we define for all the indices $i$
        \begin{equation*}
            t_i'\coloneqq\sum_{j:t_j<t_i}T_j',\qquad I'_i\coloneqq\left(t_i',t_i'+T_i'\right).
        \end{equation*}
        These intervals are clearly disjoint and $\bigcup_i\overline{I'_i}=[0,T']$. Finally the curve $\xi':[0,T']\to\Gamma$ defined by
        \begin{equation*}
            \xi'|_{\overline{I'_i}}(t)\coloneqq\gamma_i\circ\eta'_i\left(t-t_i'\right),\qquad\text{for all index }i,
        \end{equation*}
        is a reparametrization of $\xi$ and satisfies the assumptions of \cref{sigestauxbis}, thus \cref{eq:sigest1,repsigma} yield
        \begin{equation*}
            \frac{A'}{C_2}\int_0^T\left|\dot\xi(\tau)\right|_2d\tau-B\le A'T'-B\le\int_0^{T'}\sigma_c\left(\xi',\dot\xi'\right)d\tau=\int_0^T\sigma_c\left(\xi,\dot\xi\right)d\tau,
        \end{equation*}
        where $A'$ and $B$ are positive constant independent of $\xi$. This proves~\cref{eq:sigest.2}.
    \end{proof}

    We can now proceed to the proof of \cref{liveoptcurve}.

    \begin{proof}[Proof of \cref{liveoptcurve}]
        We fix $(x,t)\in\Gamma\times\Rds^+$ and an optimal curve $\xi$ for $(\Scal(t)\phi)(x)$. We define the constant
        \begin{equation*}
            l_0\coloneqq\max\left\{c_x:x\in\Vbf\setminus\wtAcal_\Gamma\right\}\vee\max\{a_\gamma:\gamma\in\Ecal,\,\gamma((0,1))\cap\Acal_\Gamma=\emptyset\},
        \end{equation*}
        then, by \cref{arcaubry,eq:loclagmin},
        \begin{equation}\label{eq:liveoptcurve1}
            L(x,0)+c\ge c-l_0>0,\qquad\text{for any }x\in\Gamma\setminus\wtAcal_\Gamma.
        \end{equation}
        We break the argument according to the sign of $L(x,q)+l_0$. If
        \begin{equation*}
            L(x,q)+l_0\ge0,\qquad\text{for every $(x,q)\in T\Gamma$ with }x\in\Gamma\setminus\wtAcal_\Gamma,
        \end{equation*}
        and $\xi$ is disjoint from $\wtAcal_\Gamma$ in $[0,t]$, then by~\cref{eq:asbound}
        \begin{equation*}
            \min_{x\in\Gamma}\phi(x)+(c-l_0)t\le\phi(\xi(0))+\int_0^t\left(L\left(\xi,\dot\xi\right)+c\right)d\tau\le\max_{x\in\Gamma}u(x)+\alpha,
        \end{equation*}
        which implies that
        \begin{equation*}
            t\le\frac{\max\limits_{x\in\Gamma}u(x)-\min\limits_{x\in\Gamma}\phi(x)+\alpha}{c-l_0}\eqqcolon T_\phi,
        \end{equation*}
        yielding the assertion.\\
        Next, we assume that $L(x,q)+l_0<0$ for some $(x,q)\in T\Gamma$ with $x\in\Gamma\setminus\wtAcal_\Gamma$, and set
        \begin{equation*}
            r_\delta\coloneqq\min\left\{|q|_2:\text{$(x,q)\in T\Gamma$ for some $x\in\Gamma\setminus\wtAcal_\Gamma$, $L(x,q)\le-l_0-\delta$}\right\}
        \end{equation*}
        and $C_\delta\coloneqq c-l_0-\delta$, where $\delta>0$ is such that $r_\delta>0$. We point out that the existence of such $\delta$ is a consequence of~\cref{eq:liveoptcurve1}. It follows that
        \begin{equation}\label{eq:liveoptcurve2}
            L(x,q)+c>C_\delta>0,\qquad\text{for any $(x,q)\in T\Gamma$ with }|q|_2<r_\delta,x\in\Gamma\setminus\wtAcal_\Gamma.
        \end{equation}
        We assume that $\xi$ is disjoint from $\wtAcal_\Gamma$ in $[0,t]$ and define the set
        \begin{equation*}
            E\coloneqq\left\{\tau\in[0,t]:\left|\dot\xi(\tau)\right|_2<r_\delta\right\},
        \end{equation*}
        then~\cref{eq:asbound}, \cref{sigest,laglbound} show that there exist two positive constants $A$ and $B$, independent of $\xi$, such that
        \begin{align*}
            A(t-|E|)r_\delta-B\le&\,A\int_0^t\left|\dot\xi(\tau)\right|_2d\tau-B\le\int_0^t\sigma_c\left(\xi,\dot\xi\right)d\tau\le\int_0^t\left(L\left(\xi,\dot\xi\right)+c\right)d\tau\\
            \le&\,\max_{x\in\Gamma}u(x)-\min_{x\in\Gamma}\phi(x)+\alpha.
        \end{align*}
        This in turn yields that there is a constant $C_1$ depending on $\phi$ such that
        \begin{equation}\label{eq:liveoptcurve3}
            t-C_1\le|E|.
        \end{equation}
        By~\cref{eq:asbound,eq:liveoptcurve2} we similarly have
        \begin{equation*}
            (t-|E|)\min_{x\in\Gamma\setminus\wtAcal_\Gamma,|q|_2\ge r_\delta}(L(x,q)+c)+C_\delta|E|\le\int_0^t\left(L\left(\xi,\dot\xi\right)+c\right)d\tau\le\max_{x\in\Gamma}u(x)-\min_{x\in\Gamma}\phi(x)+\alpha,
        \end{equation*}
        thus if we define
        \begin{equation*}
            C_2\coloneqq0\vee\left(-\min_{x\in\Gamma\setminus\wtAcal_\Gamma,|q|_2\ge r_\delta}(L(x,q)+c)\right)
        \end{equation*}
        we get
        \begin{equation}\label{eq:liveoptcurve4}
            -C_2t+(C_\delta+C_2)|E|\le\max_{x\in\Gamma}u(x)-\min_{x\in\Gamma}\phi(x)+\alpha.
        \end{equation}
        Finally we combine \cref{eq:liveoptcurve3} with~\cref{eq:liveoptcurve4} to obtain that
        \begin{equation*}
            C_\delta t-(C_\delta+C_2)C_1\le\max_{x\in\Gamma}u(x)-\min_{x\in\Gamma}\phi(x)+\alpha,
        \end{equation*}
        which proves, also in this case, that there is a constant $T_\phi$, depending only on $\phi$, such that $\xi$ is disjoint from $\wtAcal_\Gamma$ in $[0,t]$ only if $t\le T_\phi$. This concludes the proof.
    \end{proof}

    For $a>c$ it is possible to obtain an analogue of \cref{sigest}, and consequently of \cref{liveoptcurve}, with straightforward modifications. The main difference is the presence of the condition~\cref{eq:sigest.1}, which is only used in~\cref{eq:sigestaux3} to obtain the positive constant $A_1$. Such condition is not needed since, for any $a>c$,
    \begin{equation*}
        \int_0^T\sigma_a\left(\xi,\dot\xi\right)d\tau>0
    \end{equation*}
    whenever $\xi:[0,T]\to\Gamma$ is a nonconstant closed curve. More in details we have:

    \begin{lem}\label{sigestgec}
        Given $a>c$ there exist two positive constants $A$ and $B$, depending only on $a$, such that, for any curve $\xi:[0,T]\to\Gamma$,
        \begin{equation*}
            A\int_0^T\left|\dot\xi(\tau)\right|_2d\tau-B\le\int_0^T\sigma_a\left(\xi,\dot\xi\right)d\tau.
        \end{equation*}
    \end{lem}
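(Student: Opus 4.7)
The plan is to retrace the three-step argument of \cref{sigest} (via \cref{sigestaux,sigestauxbis}), showing at each stage how the hypothesis $a>c$ substitutes for the Aubry-avoidance condition \cref{eq:sigest.1}. The key preliminary observation is that every nonconstant closed curve $\zeta:[0,T]\to\Gamma$ satisfies
\begin{equation*}
    \int_0^T\sigma_a\left(\zeta,\dot\zeta\right)d\tau>0.
\end{equation*}
Indeed, since $c\ge a_0\ge a_\gamma$ for every $\gamma\in\Ecal$, we have $a>a_\gamma$, and strict quasiconvexity \ref{condqconv} (via \cref{eq:sublevelH}) together with \cref{sigcont} yields $\sigma_{\gamma,a}^+(s)>\sigma_{\gamma,c}^+(s)$ and $\sigma_{\gamma,a}^-(s)<\sigma_{\gamma,c}^-(s)$ for all $s\in[0,1]$ and $\gamma\in\Ecal$. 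These strict inequalities translate into $\sigma_a(x,q)>\sigma_c(x,q)$ whenever $q\ne0$ and $x\notin\Vbf$, and the claim follows from \cref{eq:critvalchar}.

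First I would follow \cref{sigestaux}: for a curve $\xi=(\gamma_1\circ\eta_1)*\cdots*(\gamma_k\circ\eta_k)$ with $\dot\eta_i\equiv1$ a.e., I would enumerate the finite collection $\{\zeta_i\}$ of all closed curves that are either simple or of the form $\gamma*\wtgamma$ with $\gamma\in\Ecal$, dropping the restriction to $\Ecal'$ used there. Setting
\begin{equation*}
    A_1\coloneqq\min_i\int_0^{k_i}\sigma_a\left(\zeta_i,\dot\zeta_i\right)d\tau,
\end{equation*}
the preliminary observation gives $A_1>0$; indeed, for a curve of the form $\gamma*\wtgamma$ the integrand reduces via \cref{eq:sigcomp} to $\sigma_{\gamma,a}^+(s)-\sigma_{\gamma,a}^-(s)>0$. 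The combinatorial decomposition of $\xi$ into such closed curves plus one final simple non-closed trace, together with the bounds $m\ge k/|\Ecal|-1$ and $n<|\Ecal|$ on their counts and with a constant $B_1$ coming from continuity and compactness, transfers verbatim and produces $AT-B\le\int_0^T\sigma_a(\xi,\dot\xi)\,d\tau$ for suitable positive $A$, $B$ depending only on $a$.

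Second, I would adapt \cref{sigestauxbis} to curves whose pieces satisfy only $|\dot\eta_i|=1$ a.e., allowing direction reversals on each arc. The only place where the Aubry-avoidance hypothesis entered there is the definition of the positive constant $C$ controlling reversals, given as one half of the minimum of $\sigma_{\gamma,c}^+(s)-\sigma_{\gamma,c}^-(s)$ over $s\in[0,1]$ and arcs disjoint from the Aubry set. In the present setting this constant is replaced by one half of the minimum of $\sigma_{\gamma,a}^+(s)-\sigma_{\gamma,a}^-(s)$ over all $s\in[0,1]$ and \emph{all} $\gamma\in\Ecal$, which is strictly positive by the preliminary observation, continuity (\cref{sigcont}), and compactness. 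The approximation by piecewise $C^1$ curves with $\dot\eta_\varepsilon\in\{\pm1\}$ and the Arzel\`a--Ascoli extraction pass through unchanged.

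Finally, the third reparametrization step of \cref{sigest}, which upgrades from the unit-speed parameter $T'=\sum_iT_i'$ to the Euclidean arc length $\int_0^T|\dot\xi|_2\,d\tau$ via the uniform bounds $C_1\le|\dot\gamma(s)|_2\le C_2$ and \cref{reparac,repsigma}, does not involve the Aubry set at all, and so transfers verbatim. The principal point requiring care, and which I expect to be the main (though still routine) obstacle, is the strict positivity of $A_1$: verifying that the pointwise strict inequality $\sigma_a>\sigma_c$ combined with \cref{eq:critvalchar} delivers a strict lower bound uniform over the finitely many $\zeta_i$ reduces, ultimately, to the strict monotonicity of $a\mapsto\sigma_{\gamma,a}^\pm$ provided by \ref{condqconv}.
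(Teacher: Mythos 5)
Your proposal is correct and follows exactly the route the paper itself indicates: the paper only sketches this lemma, remarking that the proof of \cref{sigest} (through \cref{sigestaux,sigestauxbis}) carries over once the Aubry-avoidance hypothesis is dropped, because for $a>c$ every nonconstant closed curve has strictly positive $\sigma_a$-integral. The additional details you supply (strict monotonicity of $a\mapsto\sigma^{\pm}_{\gamma,a}$ giving $\sigma_a>\sigma_c$ off the zero section and then positivity via \cref{eq:critvalchar}, and the replacement of the constant $C$ of \cref{sigestauxbis} by a minimum over all arcs) are precisely the ``straightforward modifications'' the paper has in mind.
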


    Arguing as in the proof of \cref{liveoptcurve}, using \cref{sigestgec} instead of \cref{sigest} and with straightforward modifications, we get the next \namecref{supliveoptcurve}.

    \begin{lem}\label{supliveoptcurve}
        Given a flux limiter $c_x$ such that $a\coloneqq\max\limits_{x\in\Vbf}c_x>c$ and a $\phi\in C(\Gamma)$, there is a $T_\phi>0$ depending only on $\phi$ and $a$ such that, for any $x\in\Gamma$, $t\ge T_\phi$ and optimal curve $\xi$ for $(\Scal(t)\phi)(x)$, $\xi([0,t])\cap\Vbf_a\ne\emptyset$.
    \end{lem}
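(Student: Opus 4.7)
The strategy is to mirror the two-case argument used to prove \cref{liveoptcurve}, replacing the critical value $c$ by $a$, the extended Aubry set $\wtAcal_\Gamma$ by $\Vbf_a$, and \cref{sigest} by its supercritical counterpart \cref{sigestgec}. The first task is to set up an upper bound playing the role of \cref{eq:asbound}. For this I would pick any continuous function on $\Vbf_a$, extend it via $S_a$ using the Hopf--Lax formula from \cref{supcritsol} to a solution $u$ of \cref{eq:globeik} in $\Gamma\setminus\Vbf_a$, and then apply \cref{supweakltb} together with the monotonicity property in \cref{Scalprop} to obtain
\begin{equation*}
    (\Scal(t)\phi)(x)+at\le u(x)+\alpha,\qquad\alpha\coloneqq0\vee\max_{y\in\Gamma}(\phi(y)-u(y)),
\end{equation*}
uniformly in $(x,t)\in\Gamma\times\Rds^+$.

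With this bound in hand, I would fix $(x,t)$ and an optimal curve $\xi:[0,t]\to\Gamma$ for $(\Scal(t)\phi)(x)$ assumed to avoid $\Vbf_a$, and introduce
\begin{equation*}
    l_0^a\coloneqq\max_{y\in\Vbf\setminus\Vbf_a}c_y\vee\max_{\gamma\in\Ecal}a_\gamma.
\end{equation*}
Since $a>c\ge a_0$ and $c_y<a$ for every $y\in\Vbf\setminus\Vbf_a$, we have $l_0^a<a$, whence $L(z,0)+a\ge a-l_0^a>0$ for all $z\in\Gamma\setminus\Vbf_a$. The argument then splits exactly as in the proof of \cref{liveoptcurve}. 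If $L(z,q)+l_0^a\ge 0$ for every $(z,q)\in T\Gamma$ with $z\in\Gamma\setminus\Vbf_a$, then integrating $L(\xi,\dot\xi)+a\ge a-l_0^a$ against the comparison bound already yields $t\le T_\phi$.

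Otherwise, I would choose $\delta>0$ so that $r_\delta\coloneqq\min\{|q|_2:z\in\Gamma\setminus\Vbf_a,\,L(z,q)\le-l_0^a-\delta\}$ is strictly positive, and split $[0,t]$ into the slow set $E\coloneqq\{\tau\in[0,t]:|\dot\xi(\tau)|_2<r_\delta\}$ and its complement. On $E$ the bound $L(\xi,\dot\xi)+a\ge a-l_0^a-\delta>0$ holds by the choice of $r_\delta$, while on $[0,t]\setminus E$, \cref{laglbound} combined with \cref{sigestgec} gives
\begin{equation*}
    A\int_0^t|\dot\xi(\tau)|_2\,d\tau-B\le\int_0^t\sigma_a(\xi,\dot\xi)\,d\tau\le\int_0^t(L(\xi,\dot\xi)+a)\,d\tau,
\end{equation*}
with positive $A,B$ depending only on $a$. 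Feeding these two lower bounds into the upper estimate $\int_0^t(L(\xi,\dot\xi)+a)\,d\tau\le u(x)+\alpha-\min_\Gamma\phi$, exactly as in Case~2 of the proof of \cref{liveoptcurve}, produces a uniform upper bound on $t$ in terms of $\phi$ and $a$ only.

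The main simplification compared with \cref{liveoptcurve} is that the supercritical \cref{sigestgec} is valid for every curve, so no partition of $\xi$ into portions inside or outside the Aubry set is needed; the part I expect to require the most care is merely keeping track that the constants $\alpha$, $l_0^a$, $r_\delta$, $A$ and $B$ depend only on $\phi$ and $a$, as asserted. Beyond this bookkeeping I do not anticipate any real obstacle, since every ingredient is either already stated or directly parallel to the one used in the critical case.
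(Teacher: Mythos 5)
Your proposal is correct and follows exactly the route the paper intends: the paper's own proof of this lemma is just the remark that one argues as for \cref{liveoptcurve}, substituting \cref{sigestgec} for \cref{sigest}, $\Vbf_{\ova}$ for $\wtAcal_\Gamma$, and an upper bound built from \cref{supweakltb}, \cref{maxsubsol} and \cref{Scalprop} in place of \cref{eq:asbound}. Your explicit choice of $l_0^a$ (where the restriction to arcs avoiding the Aubry set can indeed be dropped since $a>c\ge a_0$ makes $a-a_\gamma>0$ for every arc) and the observation that \cref{sigestgec} removes the need to track where $\xi$ meets $\Acal_\Gamma$ are precisely the "straightforward modifications" the paper alludes to.
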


    \printbibliography[heading=bibintoc]

\end{document}